\def\tsc#1{\csdef{#1}{\textsc{\lowercase{#1}}\xspace}}
\newcommand{\N}{\mathbb{N}}		
\newcommand{\R}{\mathbb{R}}		
\newcommand{\fonc}[3]{#1:  #2  \rightarrow  #3}					
\newcommand{\syst}[1]{\left \{ \begin{array}{l} #1 \end{array} \right. \kern-\nulldelimiterspace}	
\newcommand{\argmin}{\text{\normalfont argmin}}
\newcommand{\argmax}{\text{\normalfont argmax}}
\newcommand{\dom}{\text{\normalfont dom}\,}
\renewcommand{\int}{\text{\normalfont int}\,}
\newcommand{\argmind}[2]{\ensuremath{\underset{\substack{{#1}}}%
		{\mathrm{argmin}}\;\;#2 }}
\newcommand{\dist}{\text{dist}}
\newlength{\algorithmboxrule}
\newtheorem{proposition}{Proposition}
\newtheorem{corollary}{Corollary}
\newtheorem{lemma}{Lemma}
\newtheorem{theorem}{Theorem}
\newtheorem{definition}{Definition}
\newtheorem{remark}{Remark}
\newtheorem{example}{Example}
\newenvironment{proof}[1][Proof]{\noindent\textbf{#1.} }{\ \rule{0.5em}{0.5em}}
\def\qed{\hbox to 0pt{}\hfill$\rlap{$\sqcap$}\sqcup$}
\begin{document}
\let\WriteBookmarks\relax
\def\floatpagepagefraction{1}
\def\textpagefraction{.001}

\shorttitle{Primal-Dual Algorithm}    

\shortauthors{}  

\title [mode = title]{Primal-dual algorithm for weakly convex functions under sharpness conditions}  



%

\author[1,2]{Ewa M. Bednarczuk}

\fnmark[1]

\ead{Ewa.Bednarczuk@ibspan.waw.pl}



\affiliation[1]{organization={Systems Research Institute, Polish Academy of Sciences},
            addressline={Newelska 6}, 
            city={Warsaw},
            postcode={01–447}, 
            country={Poland}}
\author[1]{The Hung Tran}
        \cormark[1]
        \fnmark[2]
        \ead{tthung@ibspan.waw.pl}
        
        %
        

\author[2]{Monika Syga}
\fnmark[3]
\ead{monika.syga@pw.edu.pl}



\affiliation[2]{organization={Warsaw University of Technology, Faculty of Mathematics and Information Science},
	addressline={Koszykowa 75}, 
	city={Warsaw},
	postcode={00--662}, 
	country={Poland}}

\cortext[1]{Corresponding author}



\begin{abstract}
We investigate the convergence of the primal-dual algorithm for composite optimization problems when the objective functions are weakly convex. We introduce a modified duality gap function, which is a lower bound of the standard duality gap function. Under the sharpness condition of this new function, we identify the area around the set of saddle points where we obtain the convergence of the primal-dual algorithm. We give numerical examples and applications in image denoising and deblurring to demonstrate our results.    
\end{abstract}



\begin{keywords}
 \sep \sep \sep
\end{keywords}

\maketitle
\section{Introduction}

%
\noindent Our interest is the following saddle point problem, 
\begin{equation}
	\label{prob: saddle point primal} 
	\min_{x\in X}\max_{y\in Y} f(x) +\langle Lx,y\rangle - g^*(y),
\end{equation}
where $X$ and $Y$ are Hilbert, $f:X\to\left(-\infty,+\infty\right]$ is proper lsc weakly convex and $g:Y\to\left(-\infty,+\infty\right]$ is proper lsc convex, $L:X\to Y$ is a bounded linear operator and the conjugate $g^*(y) = \sup_{w\in Y} \{\langle y,w\rangle - g(w)\}$. When $g$ is convex, problem \eqref{prob: saddle point primal} is equivalent to the composite optimization problem in Hilbert spaces \citep{Bed2020},
\begin{equation}
	\label{prob: composite primal}
	\inf_{x\in X}f\left(x\right)+g\left(Lx\right).
\end{equation}
Problem \eqref{prob: composite primal} encompasses many optimization problems and applications by using appropriate functions in the modeling process. For example, in the convex case, one can use indicator functions to model constrained problems or use total variation for image processing problems, e.g. \nolinebreak\cite{rudin1992nonlinear,chambolle2011first}.



When $f$ and $g$ are convex functions, there have been many primal-dual methods and convergence discussions of problem \eqref{prob: composite primal} for the last two decades  \nolinebreak\citep{chambolle2011first,condat2013primal,he2012convergence,vu2013splitting}. 
For nonconvex settings, there are several attempts to analyze the primal-dual algorithms for saddle point problem \eqref{prob: saddle point primal}.
For example, in the series of work \nolinebreak\citep{clason2021primal,etna_vol52_pp509-552,valkonen2014primal}, the authors consider a smooth nonlinear coupling term instead of $\langle Lx,y\rangle$ in problem \eqref{prob: saddle point primal}, while in \nolinebreak\cite{hamedani2021primal}, an accelerated primal-dual algorithm with backtracking is proposed to solve the nonlinear saddle point problem.
On the other hand, in \nolinebreak\citep{guo2023preconditioned,li2022nonsmooth,sun2018precompact,zhu2024first}, the convergence of primal-dual methods for nonconvex objective functions is investigated by relying on Kurdyka-{\L}ojasiewicz (K{\L}) inequality, which is satisfied by a large class of functions \nolinebreak\citep{bolte2007lojasiewicz}. 
Note that K{\L} inequality is equivalent to other conditions such as quadratic growth condition, metric sub-regularity condition, or error bound condition, e.g. \nolinebreak\citep{drusvyatskiy2018error,cuong2022error,bai2022equivalence}. These conditions are usually used to improve numerical algorithms' convergence rate or achieve convergence in the nonconvex setting. 

Our main contribution is the convergence analysis of primal-dual algorithms, i.e., Algorithm \ref{alg: prima-dual no xn-1} and \ref{alg: prima-dual no with primal update 1st} for solving problem \eqref{prob: saddle point primal} under the assumption of sharpness or error-bound condition of the new duality gap function (Definition \ref{def: sharpness inf-Lagrangian}) and sharpness of objective function f problem \eqref{prob: composite primal} (Definition \ref{def: sharpness primal composite}). 
Our problem setting refers to the case where the objective functions are weakly convex. It is a large class of functions, including convex and differentiable functions with Lipschitz continuous gradient. It has been shown that for weakly convex functions, the proximal subgradient is well-defined and global. In \nolinebreak\cite{jourani1996subdifferentiability}, such property is called globalization property. Recent results (\nolinebreak\cite{bednarczuk2023calculus}) show that the proximal subgradient enjoys the calculus sum rule, so we can analyze the primal-dual algorithm in terms of the proximal subgradient. 
From the works of \nolinebreak\cite{davis2018subgradient,bednarczuk2023convergence}, weakly convex functions with sharpness condition give us a linear convergence as well as specify a starting area where the convergence is guaranteed. 
Moreover, it was observed in \nolinebreak\cite{mollenhoff2015primal,shen2018nonconvex,liu2019doa} that by including a weakly convex regularizer into the model, one can improve the performance of the model. 

Sharpness conditions are usually imposed on the objective function, see, e.g., \cite{davis2018subgradient,bednarczuk2023convergence,johnstone2020faster}. For the saddle point problem, the role of objective is played by the Lagrangian and  duality gap functions  appear in investigating the convergence rate of primal-dual methods. 
However, duality gap functions are not often used to establish convergence for primal-dual algorithms. It is effective when the domains of the saddle point problem are bounded. 
On the other hand, one uses the gradient/subgradient to measure the distance to criticality by using the KKT condition as a criterion (\nolinebreak\cite[Example 11.41]{rockafellar2009variational}). 
Recently, in \nolinebreak\cite{lu2022infimal}, the authors introduce the infimal-subdifferential size, which uses subgradient to measure the distance of the iterates toward the saddle points for the primal-dual algorithm.

For our setting, when problem \eqref{prob: saddle point primal} is defined on general Hilbert spaces $X$ and $Y$, the standard duality gap function \eqref{eq: def sharpness Lagrangian inseparable} is not a good choice for investigating convergence of primal-dual algorithm. Observe that in \cite{chambolle2011first}, the restricted duality gap function is introduced to achieve convergence. 
Instead, we introduce a modified gap function (see Definition \ref{def: sharpness inf-Lagrangian} below), which is a lower bound of the standard duality gap function, to formulate our main requirement for convergence. 
This function comes naturally from the structure of the primal-dual algorithm, albeit more restrictive than the standard gap function. Combined with the sharpness condition, we use this new modified gap function for obtaining the convergence to the set of saddle points for problems with unbounded saddle point sets~(Theorem \ref{theo: pd-yx seq convergence} and \ref{theo: pd-xy seq convergence}).

The structure of the paper is as follows. 
In Section \ref{sec:preliminaries}, we give the definition of weakly convex functions, and their properties, together with proximal subgradient and calculus sum rule for weakly convex functions. In Section \ref{sec: pd-preliminary}, we introduce the Lagrange saddle point problem as well as the standard duality gap function and the new modified duality gap function. 
We also give definitions of sharpness for the duality gap functions and supporting examples. 
Section \ref{sec: primal dual yx} and \ref{sec: primal dual xy} are devoted to the convergence analysis of the primal-dual Algorithm \ref{alg: prima-dual no xn-1} and \ref{alg: prima-dual no xn-1} which can be considered as a weakly convex counterpart of the Chambolle-Pock algorithm (\cite{chambolle2011first}). 
We show that, under sharpness condition (Definition \ref{def: sharpness inf-Lagrangian}), both primal and dual iterates converge to a saddle point provided that the iterates start sufficiently close to the set of saddle points. 
Since Algorithm \ref{alg: prima-dual no xn-1}, is not symmetric with respect to the primal and dual updates, we formulate Algorithm \ref{alg: prima-dual no with primal update 1st} by changing the order of updates, and investigate its convergence in Theorem \ref{theo: pd-xy seq convergence}.
Section \ref{sec: primal sharp only} is dedicated to the analysis of the primal-dual algorithm when only problem \eqref{prob: composite primal} is sharp. Then, the dual iterate provides auxiliary information on how close we are to the set of minimizers of problem \eqref{prob: composite primal}.
Finally, in Section \ref{sec: lsc-pd numerical}, we perform numerical tests for our theory on Algorithm \ref{alg: prima-dual no xn-1} and \ref{alg: prima-dual no with primal update 1st}. We test against the $\ell_1$-regularization problem with a fully weakly convex problem. We also extend our model to image denoising and deblurring problems.

\section{Preliminaries}
\label{sec:preliminaries}
Let $X$ be Hilbert space with the inner product $\langle ,\rangle$ and the norm $\Vert x\Vert =\sqrt{\langle x,x\rangle }$.
A function $f:X\to \left( -\infty,+\infty \right]$ is proper if its domain, denoted by $\text{dom }f=\left\{ x\in X:f\left(x\right)<+\infty\right\}$ is nonempty.
We start with the  definition of $\rho$-weak convexity (also known as $\rho$-paraconvexity, see \nolinebreak\cite{jourani1996subdifferentiability,Rolewicz1979paraconvex} or $\rho$-semiconvexity, see \nolinebreak\cite{cannarsa2004semiconcave}).

\begin{definition}[\textit{$\rho$-weak convexity}]\label{def:paraconvexity}
	Let $X$ be a Hilbert space. A function $\fonc{h}{X}{(-\infty,+\infty]}$ is said to be $\rho$-weakly convex if there exists a constant $\rho\ge0$ such that for $\lambda\in[0,1]$ the following inequality holds:
	\begin{flalign}
		\quad (\forall (x,y)\in X^2) && h(\lambda x + (1-\lambda)y) \leq  \lambda h(x) + (1-\lambda) h(y) + \lambda(1-\lambda)\frac{\rho}{2}\|x-y\|^{2}. &&&
	\end{flalign}
	We refer to $\rho$ as the \textit{modulus of weak convexity} of the function $h$. 
\end{definition}
Equivalently in Hilbert space, a $\rho$-weak convexity $f$ means that $f+\frac{\rho}{2}\Vert \cdot \Vert^2$ is convex (see \nolinebreak\cite[Proposition 1.1.3]{cannarsa2004semiconcave}). A function of the form $f(x) = -a\Vert x\Vert^2$ is $2a$-weakly convex for $a>0$. Below is another example of a weakly convex function.
\begin{example}\label{ex: weak convexity}
	Let $\fonc{f}{\R}{(-\infty,+\infty)}$ be defined as $f(x) = |(x-a)(x-b)|$, where $(a,b)\in\{(a,b)\in \R^2\,|\,a<b\}$. Function $f$ is $2$-weakly convex (\cite[Example 1]{bednarczuk2023convergence}).
\end{example}
Function $f$ from \autoref{ex: weak convexity} is a simple example of a non-smooth weakly convex function. Despite its simplicity, it can be used as a modeling function in various applications, as we will show in the numerical section. 

We also give a definition of a convex conjugate of a proper function $F:X\to (-\infty,+\infty]$
\begin{definition}
	Let $X$ be Hilbert, $F:X\to (-\infty,+\infty]$ be proper, its conjugate $F^*: X \to (-\infty,+\infty]$ is defined as 
	\[
	F^* (u) = \sup_{x\in X} \ \langle u,x\rangle -F(x).
	\]
\end{definition}

For subdifferentials, since the weakly convex function is general nonconvex, its Moreau subdifferentials can be empty (e.g. Example \eqref{ex: weak convexity} for $a <x <b$). However, its proximal subgradient is nonempty, giving us an important global property of a weakly convex function. We introduce the proximal subgradient below.

\begin{definition}[Global proximal ${\varepsilon\text{-subdifferential}}$]
	\label{def: subgrad weakly convex rho}
	Let $\varepsilon\ge 0$. Let $X$ be a Hilbert space. The global proximal ${\varepsilon\text{-subdifferential}}$ of a function $F: X \to (-\infty,+\infty]$ at $x_{0}\in \text{dom } F$ for $C \geq 0$ is defined as follows:
	\begin{equation}
		\label{eq:def_prox_subdiff_eps}
		\partial^{\,\varepsilon}_{C} F (x_0) = \left\{v\in X\,|\,  F(x)-F(x_{0})\ge\langle v\ |\  x-x_{0}\rangle-C\|x-x_{0}\|^{2}-\varepsilon\ \ \forall x\in X\right\}.
	\end{equation}
	For $\varepsilon=0$, we have
	\begin{equation}
		\label{eq:def_prox_subdiff_0}
		\partial_{C} F(x_0) = \{v\in X \,|\, F(x)\geq F(x_0) + \langle v\ |\ x-x_0\rangle  - C\|x-x_0\|^{2},\;\forall x\in X\}.
	\end{equation}
	
	In view of \eqref{eq:def_prox_subdiff_0}, $\partial_{0} F(x)$ denotes the subdifferential in the sense of convex analysis. The elements of $\partial^{\,\varepsilon}_{C} F(x)$ are called proximal $\varepsilon$-subgradients.
\end{definition}

By {\cite[Proposition 3.1]{jourani1996subdifferentiability}}, for a proper lsc $\rho$-weakly convex $F$, the global proximal subdifferential $\partial_{{\rho}/{2}} F(x_0)$ defined by \eqref{eq:def_prox_subdiff_0} coincides with the set of local proximal subgradients which satisfy \eqref{eq:def_prox_subdiff_0}  locally in a neighbourhood of $x_{0}$. {This important property is called {\em the globalisation property},  see \emph{e.g.}\ [56].} It also coincides with Clarke subdifferential $\partial_{c}h(x_{0})$ (see \nolinebreak\cite{jourani1996subdifferentiability,Atenas2023Unified}). 
In this work, we only consider proper lsc $\rho$-weakly convex functions, we will use $\partial_{\rho} F(x_{0})$ instead of $\partial_{\rho/2} F(x_{0})$ for proximal subdifferentials of $F$ at $x_0$. The next proposition describes the domain of proximal subdifferentials for weakly convex function $F$.

\begin{proposition}[{\cite[Proposition 2]{bednarczuk2023calculus}}]
	Let $X$ be a Hilbert space, and $F$ is proper lsc $\rho$-weakly convex on $X$ with $\rho\geq 0$. Then 
	$
		\dom \partial_{\rho} F \subset \dom F,
	$
	and for every $\varepsilon> 0$ {we have the equality $\dom \partial_{\rho}^{\,\varepsilon} h = \dom h$.}
\end{proposition}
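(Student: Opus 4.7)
The plan is to reduce both claims to well-known facts about proper lsc convex functions via the standard convexification $G := F + \frac{\rho}{2}\Vert\cdot\Vert^2$, which, by the equivalent characterization of $\rho$-weak convexity recalled right after Definition \ref{def:paraconvexity}, is proper, lsc, and convex; clearly $\dom G = \dom F$ since $\frac{\rho}{2}\Vert\cdot\Vert^2$ is finite and continuous on all of $X$.

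For the inclusion $\dom \partial_{\rho} F \subset \dom F$, I would argue contrapositively: if $x_0 \notin \dom F$, then $F(x_0) = +\infty$, so for any $x \in \dom F$ the finite quantity $F(x)$ would have to exceed the right-hand side of \eqref{eq:def_prox_subdiff_0}, which equals $+\infty$; this is impossible, hence $\partial_{\rho} F(x_0) = \emptyset$. The identical argument gives $\dom \partial_{\rho}^{\,\varepsilon} F \subset \dom F$ as well.

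For the non-trivial direction $\dom F \subset \dom \partial_{\rho}^{\,\varepsilon} F$, I would first establish, by substituting $F = G - \frac{\rho}{2}\Vert\cdot\Vert^2$ in \eqref{eq:def_prox_subdiff_eps} and using the identity
\[
-\frac{\rho}{2}\Vert x-x_0\Vert^2 + \frac{\rho}{2}\Vert x\Vert^2 - \frac{\rho}{2}\Vert x_0\Vert^2 \;=\; \rho\langle x-x_0,\, x_0\rangle,
\]
the equivalence
\[
v \in \partial_{\rho}^{\,\varepsilon} F(x_0) \iff v + \rho x_0 \in \partial^{\,\varepsilon} G(x_0),
\]
where $\partial^{\,\varepsilon} G(x_0)$ denotes the classical convex $\varepsilon$-subdifferential. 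It then suffices to show $\partial^{\,\varepsilon} G(x_0) \neq \emptyset$ for every $x_0 \in \dom G$ and $\varepsilon>0$. This is a standard fact: by Fenchel--Moreau, $G(x_0) = \sup_{y\in X}[\langle y, x_0\rangle - G^*(y)]$, so one can pick $y \in X$ with $\langle y, x_0\rangle - G^*(y) > G(x_0) - \varepsilon$, and Fenchel--Young then yields $G(x) \geq G(x_0) + \langle y, x-x_0\rangle - \varepsilon$ for all $x$, i.e.\ $y \in \partial^{\,\varepsilon} G(x_0)$. Setting $v := y - \rho x_0$ produces the required proximal $\varepsilon$-subgradient of $F$ at $x_0$.

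The main, and in fact the only, delicate point is the bookkeeping of the linear shift $v \mapsto v + \rho x_0$ in the equivalence above. Once that translation is verified, both inclusions reduce to classical convex analysis, and no deeper analytic tool than the Fenchel--Moreau theorem together with the Fenchel--Young inequality is needed.
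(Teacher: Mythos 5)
Your argument is correct: the translation identity $v\in\partial_{\rho}^{\,\varepsilon}F(x_0)\iff v+\rho x_0\in\partial^{\,\varepsilon}G(x_0)$ for $G=F+\tfrac{\rho}{2}\Vert\cdot\Vert^2$ checks out (consistently with the paper's convention that $\partial_\rho F$ means $C=\rho/2$ in \eqref{eq:def_prox_subdiff_0}), and the nonemptiness of the convex $\varepsilon$-subdifferential on $\dom G$ via Fenchel--Moreau and Fenchel--Young is the right classical ingredient. The paper itself gives no proof — it quotes the result from \cite[Proposition 2]{bednarczuk2023calculus} — but your reduction to the convex case is exactly the standard route that reference takes, so there is nothing to add.
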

Moreover, we also have the sum rule for proximal subdifferentials for weakly convex functions. This is an important property when analyzing primal-dual algorithm below. 
\begin{theorem}\cite[Theorem 2]{bednarczuk2023calculus}
	\label{theo: MReps} Let $X$ be a Hilbert space. 
	For $i=0,1$, let function $\fonc{f_i}{X}{(-\infty,+\infty]}$ be proper lower semicontinuous and $\rho_i$-weakly convex on $X$ with $\rho_i\geq 0$. Then, for all $x \in \dom f_0 \cap \dom f_1$ and for all $\varepsilon_0,\varepsilon_1 \geq 0$ we have
	\begin{equation}
		\label{eq: inclusion MR: eps}
		\partial_{\varepsilon_0, \rho_0 } f_0(x) +\partial_{\varepsilon_1, \rho_1} f_1(x) \subseteq \partial_{\varepsilon,{\rho}} (f_0+f_1)(x)
	\end{equation}
	for all $\varepsilon\geq\varepsilon_0+\varepsilon_1$ and for all $\rho \geq \rho_0+\rho_1$.
	The equality
	
	\begin{equation}
		\label{eq: equality MR: eps}
		\partial_{\varepsilon, \rho_0+\rho_1} (f_0 + f_1)(x)  =
		\bigcup_{\varepsilon_{0},\varepsilon_{1}\,|\,\varepsilon_{0}+\varepsilon_{1}\leq \varepsilon} \partial_{\varepsilon_0,\rho_0} f_0(x) + \partial_{\varepsilon_1,\rho_1} f_1(x)
	\end{equation}
	holds when $\dom f_0\cap \dom f_1$ contains a point at which either $f_0+{\rho_0}/2\|\cdot \|^2$ or $f_1(\cdot)+{\rho_1}/2\|\cdot \|^2$ is continuous.
\end{theorem}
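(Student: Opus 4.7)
The plan is to reduce the statement to the classical Moreau--Rockafellar $\varepsilon$-sum rule for convex functions by convexification. For each $i\in\{0,1\}$ set $\tilde f_i := f_i + \tfrac{\rho_i}{2}\|\cdot\|^2$; by the Hilbert-space characterisation recalled just after Definition~\ref{def:paraconvexity}, each $\tilde f_i$ is proper lsc and convex. A short completion-of-squares computation, using the identity $\|y\|^2 - \|x\|^2 - \|y-x\|^2 = 2\langle y-x,\,x\rangle$, gives the key bijection
\[
v \in \partial_{\varepsilon_i,\rho_i} f_i(x) \iff v + \rho_i x \in \partial^{\,\varepsilon_i} \tilde f_i(x),
\]
where $\partial^{\,\varepsilon}$ on the right denotes the classical convex $\varepsilon$-subdifferential. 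The same calculation applied to $f_0+f_1$ (which is $(\rho_0+\rho_1)$-weakly convex) yields $w\in\partial_{\varepsilon,\rho_0+\rho_1}(f_0+f_1)(x) \iff w+(\rho_0+\rho_1)x\in\partial^{\,\varepsilon}(\tilde f_0+\tilde f_1)(x)$, since $\tilde f_0+\tilde f_1 = (f_0+f_1)+\tfrac{\rho_0+\rho_1}{2}\|\cdot\|^2$.

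For the inclusion \eqref{eq: inclusion MR: eps} I would bypass the bijection and add the two defining inequalities directly. If $v_i\in\partial_{\varepsilon_i,\rho_i} f_i(x)$, summing the inequalities in \eqref{eq:def_prox_subdiff_eps} gives
\[
(f_0+f_1)(y) \geq (f_0+f_1)(x) + \langle v_0+v_1,\, y-x\rangle - \tfrac{\rho_0+\rho_1}{2}\|y-x\|^2 - (\varepsilon_0+\varepsilon_1)
\]
for every $y\in X$, so $v_0+v_1\in\partial_{\varepsilon_0+\varepsilon_1,\,\rho_0+\rho_1}(f_0+f_1)(x)$. The monotonicity of $\partial_{\varepsilon,\rho}$ in both parameters then extends this to any $\varepsilon\geq\varepsilon_0+\varepsilon_1$ and $\rho\geq\rho_0+\rho_1$.

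The equality \eqref{eq: equality MR: eps} is the substantive part. Applying the bijection above to both sides, the claim becomes
\[
\partial^{\,\varepsilon}(\tilde f_0+\tilde f_1)(x) = \bigcup_{\varepsilon_0+\varepsilon_1\leq\varepsilon} \partial^{\,\varepsilon_0}\tilde f_0(x) + \partial^{\,\varepsilon_1}\tilde f_1(x),
\]
which is precisely the classical Hiriart-Urruty--Phelps $\varepsilon$-sum rule for proper lsc convex functions. The hypothesis that $\dom f_0\cap \dom f_1$ contains a point at which $f_i+\tfrac{\rho_i}{2}\|\cdot\|^2$ is continuous coincides verbatim with the continuity condition required by that classical result. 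The translations $v_i\mapsto v_i+\rho_i x$ are additive: $(v_0+\rho_0 x)+(v_1+\rho_1 x)=(v_0+v_1)+(\rho_0+\rho_1)x$, so decompositions on the right-hand side of the classical identity correspond one-to-one to decompositions on the right-hand side of \eqref{eq: equality MR: eps}.

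The main obstacle is making sure that under the convexification the $\varepsilon$-error transfers cleanly, i.e.\ no extra quadratic remainder is produced and the linear correction is fully absorbed into the shifted subgradient. This is settled once and for all by the completion-of-squares identity quoted above, which shows that the weakly convex and convex $\varepsilon$-subdifferential inequalities differ only by the affine term $\rho_i\langle y-x,\,x\rangle$. Beyond that the argument is bookkeeping: parameter monotonicity for the inclusion, and the classical convex $\varepsilon$-sum rule combined with the additivity of the translation for the equality.
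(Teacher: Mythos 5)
The paper does not prove this statement itself: it is imported verbatim from \cite{bednarczuk2023calculus} (Theorem 2 there), so there is no in-paper proof to compare against. Your argument is nonetheless correct and is the natural route: the bijection $v\in\partial_{\varepsilon_i,\rho_i}f_i(x)\iff v+\rho_i x\in\partial^{\,\varepsilon_i}\bigl(f_i+\tfrac{\rho_i}{2}\|\cdot\|^2\bigr)(x)$ follows exactly from the completion-of-squares identity you quote (note it is consistent with the paper's convention that $\partial_{\rho}$ carries the coefficient $\rho/2$ on the quadratic term), the inclusion is immediate from adding the defining inequalities plus monotonicity in $(\varepsilon,\rho)$, and the translations are additive so the unions match up. The only point worth flagging is that the equality leans on the \emph{exact} convex $\varepsilon$-sum rule (no closure, no intersection over $\eta>0$); that exact form is indeed valid under the continuity qualification you transfer, since $\dom\bigl(f_i+\tfrac{\rho_i}{2}\|\cdot\|^2\bigr)=\dom f_i$, so the hypothesis of the theorem is precisely the hypothesis of the classical result and your invocation is legitimate.
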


\section[Primal-Dual Weakly Convex Functions]{Primal-Dual relationship of Composite Minimization Problem with weakly convex function}
\label{sec: pd-preliminary}

In this section, we introduce the Lagrangian dual problem corresponding to \eqref{prob: composite primal} and define the modifeid duality gap function, which has a global minimizers at the saddle point of Lagrange saddle point problem. We also define sharpness condition, which will be our main tool for the convergence analysis below.

For problem \eqref{prob: composite primal}, the following holds
\begin{equation}
\label{eq: lagrange primal g weakly covnex}
\inf_{x\in X} \sup_{y\in Y} f(x) +\langle Lx,y \rangle -g^* (y) = \inf_{x\in X} f(x) +g(Lx),
\end{equation}
where the function $f:X\to (-\infty,+\infty]$ is a proper lsc weakly convex defined on Hilbert space $X$, $g:Y\to (-\infty,+\infty]$ is a proper lsc convex function on Hilbert space $Y$ and $L:X\to Y$ is a bounded linear operator with its adjoint $L^* :Y\to X$.
The LHS can be considered as  Lagrange primal problem \eqref{eq: lagrange primal g weakly covnex}. Noted that, in general, Lagrange primal is not equivalent to the primal problem \eqref{prob: composite primal} unless function $g$ is convex \nolinebreak\cite[Proposition 5.1]{Bed2020}.
The Lagrange dual problem with respect to \eqref{eq: lagrange primal g weakly covnex} is
\begin{equation}
	\label{prob: dual g convex, f weakly convex}
 \sup_{y\in Y} \inf_{x\in X} \mathcal{L}(x,y),
\end{equation}
where $\mathcal{L} :X\times Y \to (-\infty,+\infty]$ is the Lagrangian defined as
\begin{equation}
	\label{eq: lagrangians}
	\mathcal{L} (x,y) := f(x) + \langle Lx,y\rangle -g^* (y).
\end{equation}

In the next result, we show that the fully weakly convex case ($f,g$ weakly convex) in the form of problem \eqref{prob: composite primal} can be reduced to our current setting ($f$ weakly convex, $g$ convex).
\begin{lemma}
	Let $X,Y$ be Hilbert spaces, consider the functions $f:X\to (-\infty,+\infty]$ be proper lsc $\rho_f$-weakly convex and $g:Y\to (-\infty,+\infty]$ be proper lsc $\rho_g$-weakly convex, the problem \eqref{prob: composite primal}
	\begin{equation*}
		\inf_{x\in X} f(x)+g(Lx),
	\end{equation*}
	is equivalent to 
	\begin{equation}
		\label{lem: prob f wc g convex}
		\inf_{x\in X} F(x)+G(Lx),
	\end{equation}
	where $F: X\to (-\infty,+\infty]$ is proper lsc weakly convex and $G: Y\to (-\infty,+\infty]$ is proper lsc convex.
\end{lemma}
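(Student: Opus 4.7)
The plan is to convexify $g$ by adding a quadratic, then absorb the compensating quadratic into $f$ via the composition with $L$. Concretely, since $g$ is proper lsc $\rho_g$-weakly convex, the function
\begin{equation*}
G(y) := g(y) + \tfrac{\rho_g}{2}\|y\|^2
\end{equation*}
is proper lsc convex on $Y$ by the characterization of weak convexity recalled in Section~\ref{sec:preliminaries}. Then for every $x\in X$, $g(Lx) = G(Lx) - \tfrac{\rho_g}{2}\|Lx\|^2$, so
\begin{equation*}
f(x) + g(Lx) = \underbrace{\bigl(f(x) - \tfrac{\rho_g}{2}\|Lx\|^2\bigr)}_{=:F(x)} + G(Lx),
\end{equation*}
which realizes the claimed decomposition pointwise, hence gives equivalence of the two infima.

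Next I would verify that $F$ is proper lsc and weakly convex. Properness and lower semicontinuity of $F$ follow from those of $f$ together with the fact that $x\mapsto \|Lx\|^2$ is a continuous quadratic on $X$ (since $L$ is bounded), and $\dom F = \dom f$. For weak convexity, the key observation is that $-\tfrac{\rho_g}{2}\|L\cdot\|^2$ is $\rho_g\|L\|^2$-weakly convex: indeed,
\begin{equation*}
\tfrac{\rho_g\|L\|^2}{2}\|x\|^2 - \tfrac{\rho_g}{2}\|Lx\|^2 = \tfrac{\rho_g}{2}\bigl(\|L\|^2\|x\|^2 - \|Lx\|^2\bigr)
\end{equation*}
is a nonnegative quadratic form (its associated operator $\rho_g(\|L\|^2 \mathrm{I} - L^*L)$ is positive semidefinite), hence convex. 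Adding this to $f$, which is $\rho_f$-weakly convex, shows that $F + \tfrac{\rho_f + \rho_g\|L\|^2}{2}\|\cdot\|^2$ is a sum of two convex functions, so $F$ is $(\rho_f + \rho_g\|L\|^2)$-weakly convex.

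Finally, I would note that the infima of the two problems coincide, the feasible set $X$ is unchanged, and the minimizer sets are identical, which is the sense of ``equivalent'' used throughout the paper. No real obstacle is expected; the only subtle point is tracking the weak convexity modulus of $F$, which is why I would explicitly display the operator norm bound on $\|L\|$ rather than relying on a generic ``composition with a linear map preserves weak convexity'' statement. This also makes it transparent that if $L$ is large, the modulus of $F$ inherits a corresponding blow-up, which matters for the subsequent algorithmic analysis.
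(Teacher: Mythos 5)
Your proposal is correct and uses exactly the paper's decomposition: $F(x)=f(x)-\tfrac{\rho_g}{2}\|Lx\|^2$ and $G(y)=g(y)+\tfrac{\rho_g}{2}\|y\|^2$, with the same modulus $\rho_f+\rho_g\|L\|^2$ for $F$. The only difference is cosmetic: you verify the weak convexity of $F$ directly via positive semidefiniteness of $\rho_g(\|L\|^2\mathrm{I}-L^*L)$ where the paper cites an external proposition, and you omit the paper's closing check (not needed for the statement as written) that the Lagrange primal of the $F,G$ problem recovers the original one via $G^{**}=G$.
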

\begin{proof}
	Since $g$ is $\rho_g$-weakly convex, we can turn \eqref{prob: composite primal} into
	\[
	\inf_{x\in X} f(x) -\frac{\rho_g}{2}\Vert Lx\Vert^2 +\frac{\rho_g}{2}\Vert Lx\Vert^2 +g(Lx).
	\]
	Denoting $F(x) = f(x) -\frac{\rho_g}{2}\Vert Lx\Vert^2$ and $G(y) = g(y) +\frac{\rho_g}{2}\Vert y\Vert^2$, we obtain \eqref{lem: prob f wc g convex} where $F$ is $\rho_f+\rho_g\Vert L\Vert^2$-weakly convex (see \nolinebreak\cite[Proposition 4.5]{bednarczuk2023duality}) and the function $G$ is convex.
	The conjugate dual problem with respect to functions $F$ and $G$ is
	\begin{equation}
		\label{prob: dual G convex}
		\sup_{y\in Y}\inf_{x\in X} F\left(x\right)+\left\langle Lx,y\right\rangle -G^{*} \left(y\right),
	\end{equation}
	with the respective Lagrangian
	\begin{align}
		\label{lem: Lagrangian G convex}
		& \mathcal{L}_{FG} (x,y) = F\left(x\right)+\left\langle Lx,y\right\rangle -G^{*} \left(y\right).
	\end{align}
	Now let us consider the Lagrange primal problem with respect to \eqref{lem: Lagrangian G convex}, 
	\[
	\inf_{x\in X}\sup_{y\in Y} \mathcal{L}_{FG} (x,y) = \inf_{x\in X}\sup_{y\in Y} F\left(x\right)+\left\langle Lx,y\right\rangle -G^{*} \left(y\right) =  \inf_{x\in X} F\left(x\right)+ G^{**} \left(Lx\right).
	\]
	Because $G$ is lsc convex, $G^{**} = G$ \cite[Theorem 13.37]{Bau2011} so we finish the proof.
\end{proof}




By the fact that the conjugate function is a convex function, the Lagrangian is convex with respect to the second variable $y\in Y$. On the other hand, it is nonconvex (weakly convex) in the primal variable $x\in X$. 

Let us consider $\mathcal{K}:X\times Y\to (\infty,+\infty]$ be a proper function. Recall that $(x^*,y^*)$ is a saddle point \cite[Definition 19.16]{Bau2011} of $\mathcal{K}(x,y)$ if it satisfies
\begin{equation}
	\label{eq: lagrange saddle points prop}
	(\forall (x,y)\in X\times Y) \qquad \sup_{y\in Y}\mathcal{K} (x^*,y) \geq \mathcal{K} (x^*,y^*) \geq \inf_{x\in X} \mathcal{K} (x,y^*).
\end{equation}

Combining with the general minimax relationship i.e.
\begin{equation}
	\label{eq: saddle poitn lagrange}
\inf_{x\in X} \sup_{\hat{y}\in Y}\mathcal{K}\left(x,\hat{y}\right)- \sup_{y\in Y} \inf_{\hat{x}\in X} \mathcal{K}\left(\hat{x},y\right)
	\geq 0,
\end{equation}
one can define a gap function $\mathcal{G}_\mathcal{K}: X\times Y \to [0,\infty)$
\begin{align}
    \label{eq: gap function general}
	\mathcal{G}_\mathcal{K} (x,y) & := \sup_{(\overline{x},\overline{y})\in X\times Y} \mathcal{K}(x,\overline{y}) - \mathcal{K}(\overline{x},y).
\end{align}
This is a standard definition of duality gap function which comes from variation inequality, first appearing in \citep{auslender1973} and is widely used for general saddle point problem \citep{chambolle2011first,tran2014primal,davis2015convergence}.
From \eqref{eq: gap function general}, it is clear that $\mathcal{G}_{\mathcal{K}} (x,y)=0$ if and only if $(x,y)$ is a saddle point $(x,y)$. For the Lagrangian \eqref{eq: lagrangians}, the gap function can be written as
\begin{align}
    \label{eq: def sharpness Lagrangian inseparable}
	\mathcal{G}_{\mathcal{L}} (x,y) & = f(x)+g^*(y) + \sup_{(\hat{x},\hat{y}) \in X\times Y} \langle Lx,\hat{y}\rangle-\langle L\hat{x},y \rangle - f(\hat{x}) - g^*(\hat{y})\\
    & = f(x)+g^*(y) + g^{**}(Lx) -f^*(-L^*y). \nonumber
\end{align}

Observe that $\mathcal{G}_\mathcal{L} (x,y)$ can take $+\infty$ when $f(\hat{x}) = -\infty$ or $g^*(y) = +\infty$.
Since $f$ is proper lsc weakly convex and $g$ is proper lsc convex, the gap function $\mathcal{G}_\mathcal{L} (x,y)$, according to~\eqref{eq: def sharpness Lagrangian inseparable},
is weakly convex with respect to $x$, and convex with respect to $y$. It is also jointly weakly convex with respect to $(x,y)$ thanks to its structure.
Indeed, let $\lambda\in [0,1], (x_1,y_1),(x_2,y_2) \in X\times Y$, we have
\begin{align}
	& \mathcal{G}_\mathcal{L} (x_1 +(1-\lambda)x_2, y_1 +(1-\lambda)y_2) \nonumber\\
	& = f (x_1 +(1-\lambda)x_2) +g^*(y_1 +(1-\lambda)y_2) \nonumber\\ 
	& + \sup_{(\hat{x},\hat{y}) \in X\times Y} \langle L(x_1 +(1-\lambda)x_2 ),\hat{y}\rangle-\langle L\hat{x}, y_1 +(1-\lambda)y_2 \rangle - f(\hat{x}) - g^*(\hat{y}) \nonumber\\
	& \leq f (x_1 ) +g^*(y_1 ) + (1-\lambda) (f (x_2) +g^*(y_2)) + \sup_{(\hat{x},\hat{y}) \in X\times Y} \langle Lx_1 ,\hat{y}\rangle-\langle L\hat{x}, y_1  \rangle - f(\hat{x}) - g^*(\hat{y}) \nonumber\\
	& + (1-\lambda) \sup_{(\hat{x},\hat{y}) \in X\times Y} \langle L x_2 ,\hat{y}\rangle-\langle L\hat{x}, y_2 \rangle - f(\hat{x}) - g^*(\hat{y}) + \lambda (1-\lambda)\frac{\rho_f}{2} \Vert x_1 -x_2\Vert^2 \nonumber\\
	& \leq \mathcal{G}(x_1,y_1) +(1-\lambda) \mathcal{G}(x_2,y_2) + \lambda (1-\lambda)\frac{\rho_f}{2} \Vert (x_1,y_1) -(x_2,y_2)\Vert^2_{X\times Y}, \label{eq: weak convex of Gap}
\end{align}
where we use weak convexity of $f$ with modulus $\rho_f$,  convexity of $g^*$, and the norm of the Cartesian product $X\times Y$ is defined as
$
\Vert (x,y)\Vert_{X\times Y} = \sqrt{\Vert x \Vert_X^2 +\Vert y\Vert_Y^2}.
$
Hence, the standard duality gap function $\mathcal{G}_\mathcal{L}$ is jointly weakly convex with modulus $\rho_f$.




\subsection{Sharpness}
Let us introduce the sharpness condition for the Lagrangian, which measures how far we are from the saddle point in terms of Lagrangian values. 

\begin{definition}[Sharpness]
	\label{def: sharpness Lagrangian nonseparable}
	Let us denote $z=(x,y)\in X\times Y = Z$, and let $S\subset Z$ be the nonempty set of saddle point of $\mathcal{K}: Z\to (-\infty,+\infty]$. We say that $\mathcal{K}$ is sharp if there exists a positive constant $\mu >0$ such that
	\begin{equation*}
		(\forall z \in Z)\quad \mathcal{G}_\mathcal{K} (z) \geq \mu\text{dist}\left(z,S\right),
	\end{equation*}
	where the distance function is defined as
	\begin{equation*}
		\dist (z,S) = \inf_{\bar{z} \in Z} \Vert z -\bar{z}\Vert = \inf_{(\bar{x}, \bar{y})\in S} \sqrt{\Vert x-\bar{x}\Vert^2_X +\Vert y-\bar{y}\Vert^2_Y }.
	\end{equation*}
\end{definition}


Definition \ref{def: sharpness Lagrangian nonseparable} coincides with sharpness condition of a proper function $F:X\to (-\infty,+\infty]$ with respect to its minimizer, in view of the fact $\mathrm{argmin}\  \mathcal{G}_\mathcal{K} = S$,
\[
F (x) - \inf_{\bar{x}\in X} F(\bar{x}) \geq \mu \dist (x,\arg\min F).
\]
Sharpness condition is also known as error bound condition and is often used to speed up the convergence of algorithms \nolinebreak\cite{roulet2017sharpness,colbrook2022warpd,fercoq2022quadratic}. 
In a nonconvex setting, sharpness is used to investigate the convergence of algorithm \nolinebreak\cite{davis2018subgradient,li2019incremental,chen2021distributed}. For necessary and sufficient conditions for error bound condition, the reader is referred to the works of \nolinebreak\cite{drusvyatskiy2018error,drusvyatskiy2021nonsmooth,cuong2022error}.

In the context of primal-dual problems, there are several ways of introducing sharpness conditions. 
For example, \nolinebreak\cite{applegate2023faster} introduces the normalized duality gap function and proves that it is sharp for linear programming problems. They exploit its properties to obtain a linear convergence of the primal-dual algorithm with restart.
With an analogous approach, \nolinebreak\cite{xiong2023computational} gives two new measures and condition to study the convergence of primal-dual algorithm based on normalized dual gap function, namely limiting error ratio and LP sharpness.
Conversely, \nolinebreak\cite{fercoq2022quadratic} use smoothed dual gap function with quadratic growth condition (or sharpness of order $2$) to investigate linear convergence of primal-dual hybrid gradient for convex optimization problem.
Another line of work comes from \nolinebreak\cite{colbrook2022warpd}, where the author uses sharpness condition on the primal objective of the linear inverse problem (instead of the dual gap function) and study the linear convergence of the primal-dual algorithm.
To the best of our knowledge, these are the only works in the literature that use a concept analogous to the sharpness in the sense of Definition \ref{def: sharpness Lagrangian nonseparable}.


Nevertheless, the duality gap function $\mathcal{G}_\mathcal{K}$ is not often used to achieve convergence of primal-dual algorithm, even in the fully convex case. Instead, one can obtain the convergence by utilizing the gap function restricted on bounded set which contain the saddle points \cite[Theorem 1]{chambolle2011first}.
To investigate the convergence of the primal-dual Algorithm \ref{alg: prima-dual no xn-1} and \ref{alg: prima-dual no with primal update 1st}, we introduce inf-sharpness condition below based on a modified gap function, which will be crucial in the convergence analysis, see, e.g., Theorem \ref{theo: pd-yx seq convergence} and Theorem \ref{theo: pd-xy seq convergence}.
The modified gap function $\mathcal{H}_\mathcal{K} :Z \to (-\infty,+\infty]$ is defined as
\begin{equation}
    \mathcal{H}_\mathcal{K} (x,y) := \inf_{(\bar{x},\bar{y})\in S} \left\{\mathcal{K}\left(x,\bar{y} \right) -\mathcal{K}\left(\bar{x},y\right)\right\},
\end{equation}
where $S$ is the set of saddle points of $\mathcal{K}$.
Observe that $\mathcal{H}_\mathcal{K} (x,y) \geq 0$ for any $(x,y)\in Z$ and if $(x,y)\in S$ then $\mathcal{H}_\mathcal{K} (x,y) =0$. The converse is not true, as we will see in Example~\ref{ex: inf-sharp Lagrange 4} below. Moreover, we have
\begin{equation}
\label{eq: gap function G and H}
\mathcal{G}_\mathcal{K} (x,y) \geq \sup_{(\bar{x},\bar{y})\in S} \left\{\mathcal{K}\left(x,\bar{y}\right) -\mathcal{K}\left(\bar{x},y\right)\right\} \geq \mathcal{H}_\mathcal{K} (x,y) \geq \inf_{(\bar{x},\bar{y})\in X\times Y} \left\{\mathcal{K}\left(x,\bar{y}\right) -\mathcal{K}\left(\bar{x},y\right)\right\}.
\end{equation}
Now, we are ready to introduce sharpness condition for function $\mathcal{H}_\mathcal{K}$ which we will call Inf-Sharpness.
\begin{definition}[Inf-Sharpness]
	\label{def: sharpness inf-Lagrangian}
	Let $S\subset Z$ be the nonempty set of saddle points of the $\mathcal{K}$. 
	We say that the $\mathcal{K}$ is \emph{inf-sharp} if there exists a positive constant $\mu >0$ such that,
	\begin{equation}
		\label{eq: def sharpness inf-Lagrangian}
		(\forall z\in Z) \quad \mathcal{H}_\mathcal{K} (z) \geq \mu\text{dist}\left(z,S\right). 
	\end{equation}
\end{definition}
Observe that, from \eqref{eq: gap function G and H}, if the $\mathcal{K}$ is inf-sharp, then we also have $\mathcal{K}$ sharpness in the sense of Definition \ref{def: sharpness Lagrangian nonseparable} with the same constant $\mu>0$. 

In the case of Lagrangian $\mathcal{L}$ defined in \eqref{eq: lagrangians}, if the functions $f$ and $g^*$ are strongly convex, the set of minimizers of $\mathcal{H}_\mathcal{L}$ coincides with the set of saddle points of problem \eqref{prob: composite primal}. Indeed, let $\mathcal{H}_\mathcal{L} (x,y) =0$ and $(x^*,y^*)\in S$ be the saddle point such that the infimum is attained for $\mathcal{H}_\mathcal{L} (x,y)$, we have
\[
0 =\mathcal{H}_\mathcal{L} (x,y) = \left( f(x)-f(x^*) +\langle Lx,y^*\rangle\right)+\left( g^*(y)-g^*(y^*) - \langle Lx^*,y\rangle\right) \geq \frac{\rho_f}{2}\Vert x-x^*\Vert^2 +\frac{\rho_{g^*}}{2}\Vert y-y^*\Vert^2,
\]
where $\rho_f,\rho_{g^*}>0$ are the strongly convex modulus of $f$ and $g^*$, respectively. 

In the fully weakly convex case, one can follow the approach of \cite{fercoq2022quadratic} and define the following function
\[
\mathcal{I}_\mathcal{K} (x,y) := \inf_{(x',y')\in S} \left\{ \mathcal{K}(x,y')-\mathcal{K}(x',y) +\frac{\beta}{2}\left( \Vert x-x'\Vert^2 +\Vert y-y'\Vert^2 \right) \right\} \geq \mathcal{H}_\mathcal{K} (x,y),
\]
where $\beta >0$ is the scaling parameter. Its minimizers are the saddle points of $\mathcal{K}$, and $\beta$ can be taken to be a scaling parameter. 
The use of the function $\mathcal{I}$, in the convergence analysis of Algorithm \ref{alg: prima-dual no xn-1} and \ref{alg: prima-dual no with primal update 1st}, would require more constraints on the stepsizes.

\begin{example}
	\label{ex: Lagrange inf-sharp 1}
	Let us give a simple example of function $\mathcal{K} (x,y)$ that satisfies Definition \ref{def: sharpness inf-Lagrangian}. Consider $\mathcal{K}:\mathbb{R}\times\mathbb{R}\to (-\infty,+\infty]$ of the following form,
	\begin{equation}
		\label{exm: inf-sharp 1st}
		\mathcal{K}(x,y) = |x|-|y|,
	\end{equation}
	which is convex-concave. It has a unique saddle point at $x=y=0$.
	We have
	\begin{equation*}
		\mathcal{H}_\mathcal{K} (x,y) = \mathcal{K}(x,0) - \mathcal{K}(0,y) = |x|+|y| \geq \sqrt{x^2+y^2},
	\end{equation*}
	so the function $\mathcal{K}$ is inf-sharp with a constant $1$.
\end{example}

\begin{example}
	\label{exm: inf-sharp 2nd case}
	Even though the function $\mathcal{K}$ from the previous example is inf-sharp, it does not have the bilinear term as in \eqref{eq: lagrangians}. Let us consider another example inspired by \eqref{exm: inf-sharp 1st},
	\begin{equation}
		\label{exm: inf-sharp 2nd}
		\mathcal{L}(x,y) = |x| +xy-\frac{y^2}{8},
	\end{equation}
	which has a unique saddle point at $x=y=0$ (see figure \ref{fig:plots-ex2 inf-sharp}). Moreover, the primal and dual problems corresponding to \eqref{exm: inf-sharp 2nd} are
	\begin{align*}
		\inf_{x\in \mathbb{R}} |x| +2x^2, \quad
		& \sup_{y\in\mathbb{R}} -\frac{y^2}{8}. 
	\end{align*}
	\begin{figure}
		\centering
		\includegraphics[width=.7\textwidth]{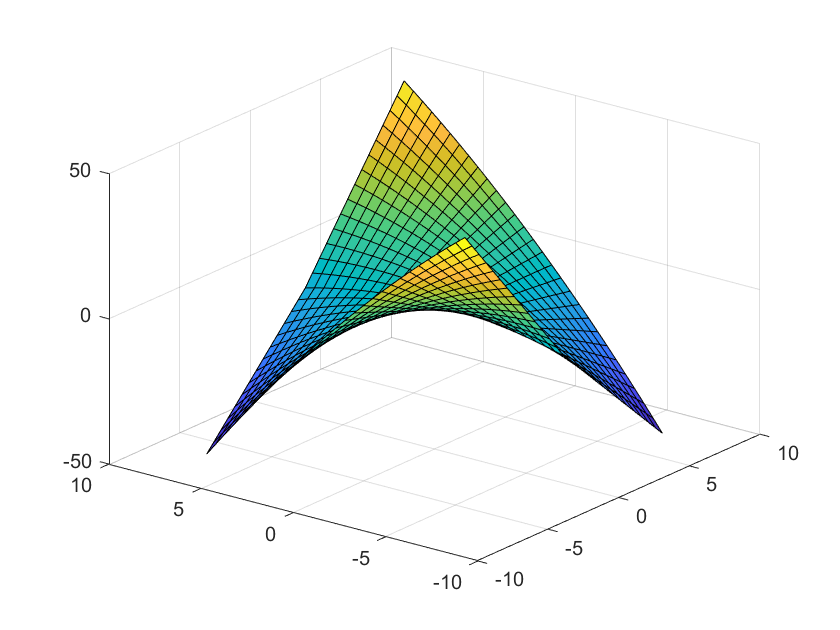}
		\caption{Lagrange function from Example \ref{exm: inf-sharp 2nd case}}
		\label{fig:plots-ex2 inf-sharp}
	\end{figure}
	
	To check for inf-sharp condition, we calculate 
	\[
	\mathcal{H}_\mathcal{L} (x,y) = \mathcal{L}(x,0) - \mathcal{L}(0,y) = |x|+\frac{y^2}{8},
	\]
	and check for the value of $\mu>0$ such that for all $(x,y)\in X\times Y$,
	\begin{equation}
		\label{eq: exm2 H- dist}
		\mathcal{H}_\mathcal{L} (x,y) - \mu \sqrt{x^2+y^2} >0.
	\end{equation}
	We test $\mu=1, 0.5, 0.1$. To see the area of interest, we plot the contour of \eqref{eq: exm2 H- dist} in Figure \ref{fig:plots-ex2 inf-sharp H-dist}.
	\begin{figure}
		\centering
			\subfigure[]{\includegraphics[width=0.32\textwidth]{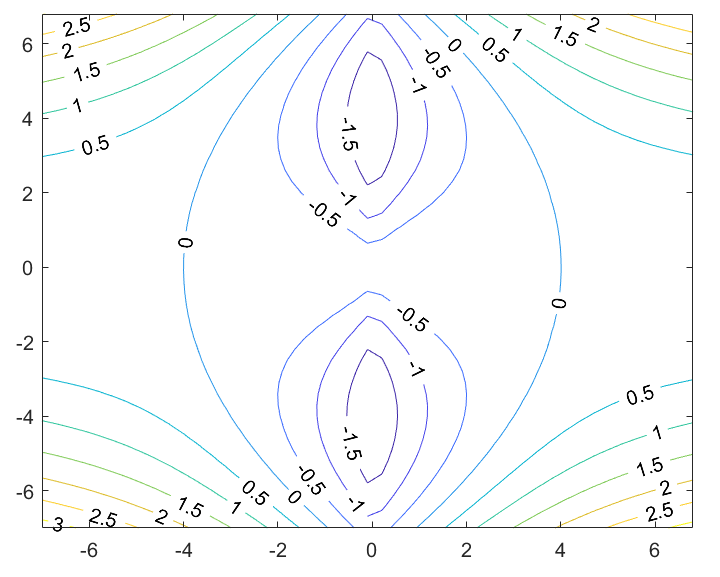}} 
			\subfigure[]{\includegraphics[width=0.32\textwidth]{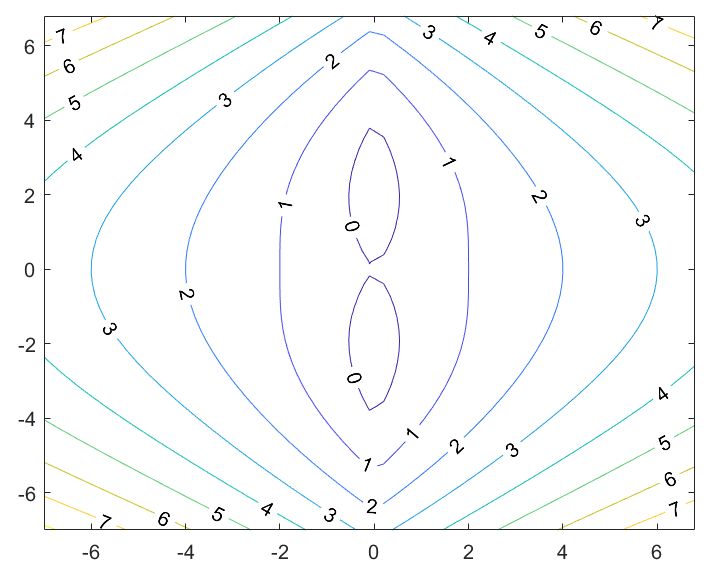}}
   \subfigure[]{\includegraphics[width=0.32\textwidth]{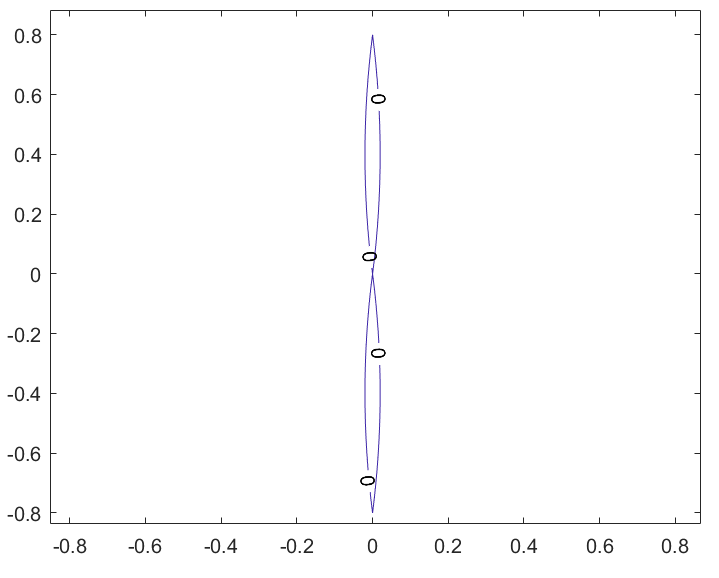}}
		\caption{Example~\ref{exm: inf-sharp 2nd case}; From left to right: contour plots of $\mathcal{H}_\mathcal{L}(x,y) - \mu \sqrt{x^2+y^2}$ at $\mu = 1, 0.5, 0.1$.}
		\label{fig:plots-ex2 inf-sharp H-dist}
	\end{figure}
	As we can see, Lagrangian is not inf-sharp around $(0,0)$ with $\mu=1$. 
	As we decrease $\mu$, we gain more area for which $\mathcal{H}_\mathcal{L} (x,y) - \mu \sqrt{x^2+y^2}>0$ around $(0,0)$ but never a full neighborhood of $(0,0)$ (even with $\mu=0.1$). Hence, the Lagrangian in this example is not inf-sharp.
\end{example}
\begin{example}
	\label{ex: Lagrange inf-sharp 3}
	Let us consider Example \ref{ex: Lagrange inf-sharp 1} with the bilinear term as
	\[
	\mathcal{L}(x,y) = |x| +xy - |y|,
	\]
	which has a saddle point at $(0,0)$. 
	The respective primal and dual problems for this Lagrangian are
	\begin{align*}
		\inf_{-1 \leq x \leq 1}  |x|, \quad
		\sup_{ -1 \leq y \leq 1} -|y|.
	\end{align*}
	
	We calculate the function $\mathcal{H}(x,y)$
	\[
	\mathcal{H}_\mathcal{L} (x,y) = \mathcal{L}(x,0) -\mathcal{L}(0,y) = |x| +|y| \geq \sqrt{x^2 +y^2} = \dist ((x,y),(0,0)).
	\]
	Therefore, the Lagrangian in this case is inf-sharp with constant $1$. 
 We also illustrate the difference $\mathcal{H}_\mathcal{L} (x,y) - \mu \dist((x,y),S$ for $\mu = 0.5, 0.9, 1$ in Figure \ref{fig:plots-ex3 inf-sharp H-dist}.
	\begin{figure}
		\centering
		\subfigure[]{\includegraphics[width=0.32\textwidth]{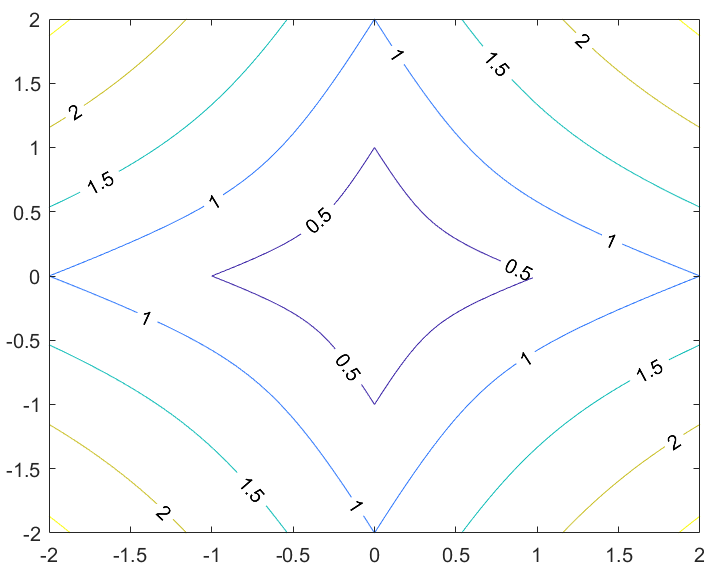}}
		\subfigure[]{\includegraphics[width=0.32\textwidth]{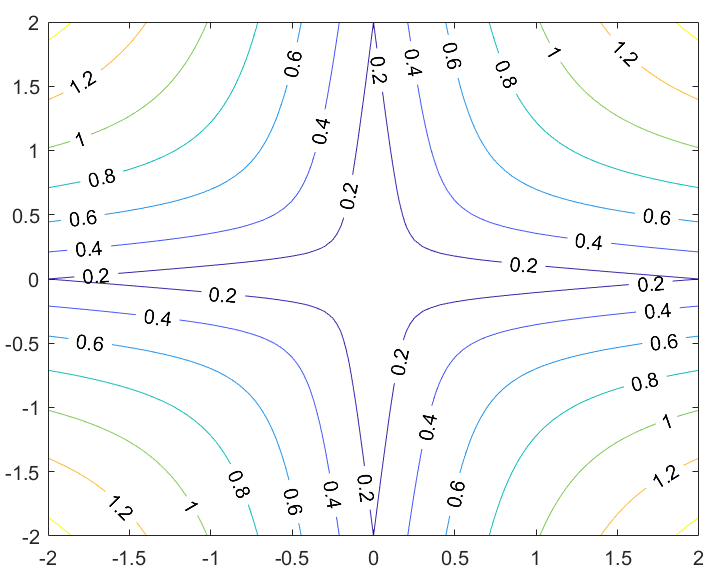}}
        \subfigure[]{\includegraphics[width=0.32\textwidth]{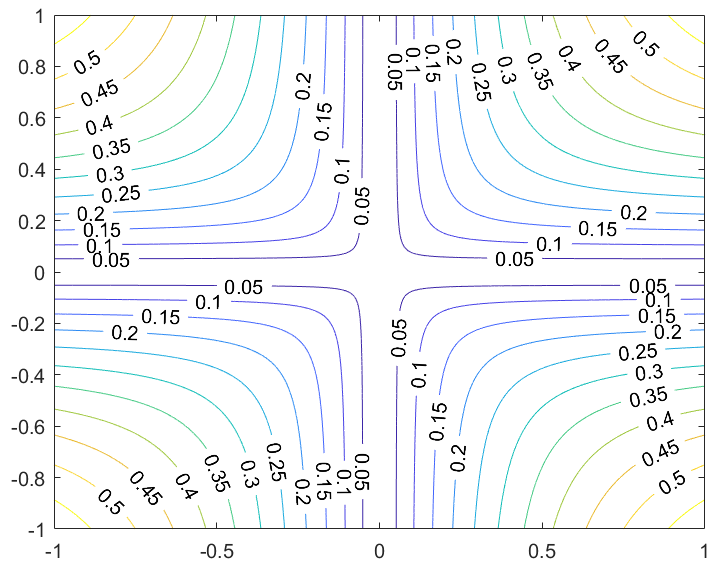}}
		\caption{Example~\ref{ex: Lagrange inf-sharp 3}; From left to right: contour plots of $\mathcal{H}_\mathcal{L}(x,y) - \mu \dist ((x,y),(0,0))$ at $\mu = 0.5, 0.9, 1$.}
		\label{fig:plots-ex3 inf-sharp H-dist}
	\end{figure}
\end{example}

\begin{example}
	\label{ex: lagrange inf-sharp 4}
	We give one more example of a more complex function. Consider the function
	\[
	\mathcal{K}(x,y) = |x|+|x^2-1| +xy - |y| - |y^2-1|,
	\]
	which has a saddle point at $(0,0)$ (see Figure \ref{fig:plots-ex4 Lagrange inf-sharp}). The function $\mathcal{K}(x,y)$ is weakly convex in $x$ and weakly concave in $y$.
	\begin{figure}
		\centering
		\includegraphics[width=0.8\textwidth, trim ={0 0 0 0}, clip]{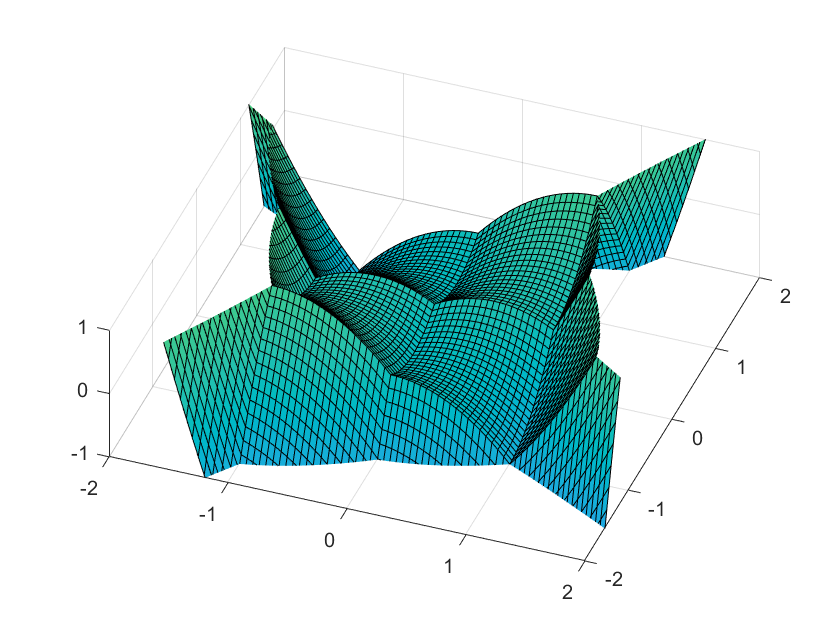}
		\caption{Function $\mathcal{K}(x,y)$ from Example \ref{ex: lagrange inf-sharp 4}.}
		\label{fig:plots-ex4 Lagrange inf-sharp}
	\end{figure}
	We calculate the function $\mathcal{H}_\mathcal{K} (x,y)$
	\begin{align*}
		\mathcal{H}_\mathcal{K} (x,y) = \mathcal{K}(x,0) -\mathcal{K}(0,y) & = |x|+|x^2-1| +|y|+|y^2-1|-2 
		\geq \sqrt{x^2 +y^2}-2 = \dist ((x,y),(0,0))-2.
	\end{align*}
	The above inequality implies that $\mathcal{K}$ is not inf-sharp with constant $1$. In fact, it is locally inf-sharp with constant $\mu<1$. We illustrate the difference $\mathcal{H}_\mathcal{K}(x,y) - \mu\dist ((x,y),(0,0))$ with $\mu = 0.5, 0.9, 1$ in Figure \ref{fig:plots-ex4 inf-sharp H-dist}.
    Notice that there are others points other than the saddle point that $\mathcal{H}_\mathcal{K} (x,y) = 0$, e.g., $\mathcal{H}_\mathcal{K} (0,1) = \mathcal{H}_\mathcal{K} (1,1) = \mathcal{H}_\mathcal{K}(1,0) = 0$.
\end{example}
\begin{figure}
	\centering
  \subfigure[]{\includegraphics[width=0.32\textwidth]{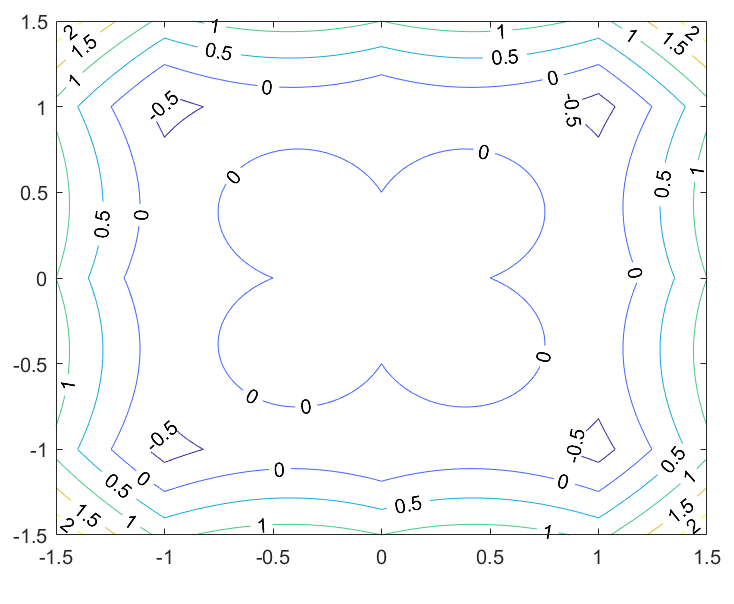}} 
	\subfigure[]{\includegraphics[width=0.32\textwidth]{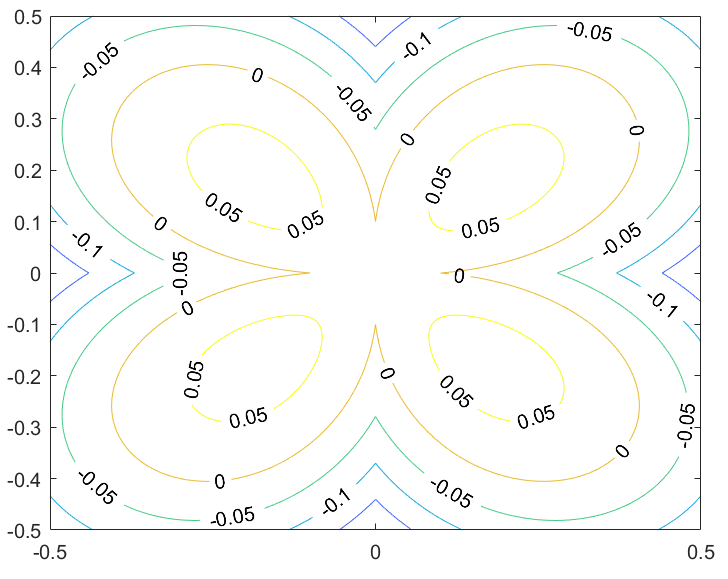}}
 \subfigure[]{\includegraphics[width=0.32\textwidth]{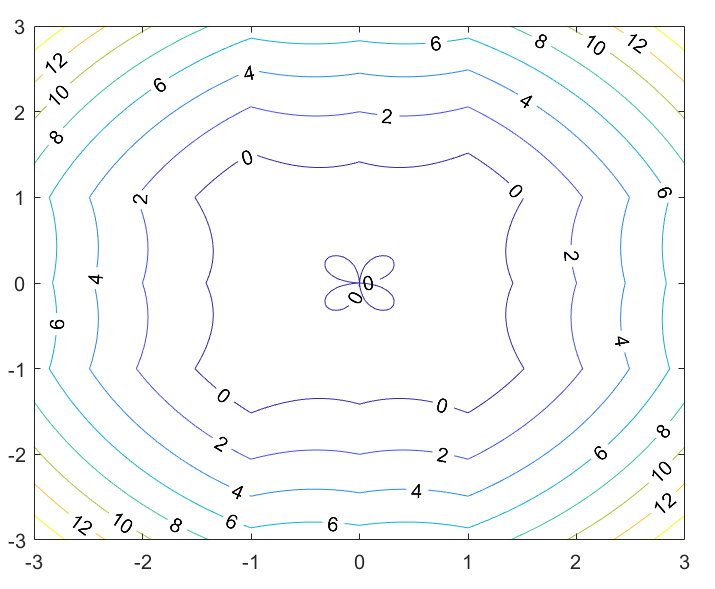}}
	\caption{Example~\ref{ex: lagrange inf-sharp 4}; From left to right: contour plots of $\mathcal{H}_\mathcal{K}(x,y) - \mu \dist ((x,y),(0,0))$ at $\mu = 0.5, 0.9, 1$.}
	\label{fig:plots-ex4 inf-sharp H-dist}
\end{figure}


\section{Primal-Dual algorithm with Dual Update}
\label{sec: primal dual yx}

Let us recall the Lagrange saddle point problem \eqref{eq: lagrange primal g weakly covnex}
\[
\inf_{x\in X}\sup_{y\in Y} f(x)+\langle Lx,y\rangle-g^*(y) = \inf_{x\in X} f(x) +g(Lx),
\]
where $f$ is proper lsc $\rho$-weakly convex function and $g$ is proper lsc convex defined on Hilbert spaces $X$ and $Y$, respectively, and $L:X\to Y$ is a bounded linear operator with the adjoint operator $L^* :Y\to X$. 
Our standing assumption in this section is that $\dom f \cap \dom g\circ L\neq \emptyset$ and the set of saddle points $S$ is nonempty.

For convex functions $f$ and $g$, the analogous of Algorithm \ref{alg: prima-dual no xn-1} has been studied in \nolinebreak\cite{chambolle2011first}. 
In \nolinebreak\cite{he2014convergence}, the authors consider Algorithm \ref{alg: prima-dual no xn-1} for convex case with no relaxation ($\theta =0$). They emphasize that the algorithm may diverge even if the Lagrangian is convex-concave. 

In our setting, under the inf-sharpness condition defined in Definition \ref{def: sharpness inf-Lagrangian},  we investigate the convergence of Algorithm \ref{alg: prima-dual no xn-1} where function $f$ is weakly convex and $g$ is convex (see Theorem \ref{theo: pd-yx seq convergence}). 
For weakly convex function $f$, we use proximal subdifferentials. It has been proved that for this particular class of functions, the proximal subdifferential can be globalized (see \nolinebreak\cite[Proposition 3.1]{jourani1996subdifferentiability}).


\begin{algorithm}
\begin{algorithmic}[1]

\State \textbf{Initialize:} $x_0 \in \dom f \cap \dom g\circ L$ and $\bar{y}_0 = y_0$
\State {\textbf{Set:} $\tau,\sigma >0, \sigma\rho <1, \theta \in [0,1], \sqrt{\sigma \tau}\Vert L\Vert<1$} 
\For {$n\in \geq 0$}
\State $y_{n+1}  =\argmin_{y\in Y} g^*(y) +\frac{1}{2\tau} \Vert y - y_n -\tau L x_n\Vert^2  $ \Comment{(dual update)} 
\State $\bar{y}_{n+1}  = y_{n+1} + \theta (y_{n+1} -y_{n}) $ \Comment{ ($\theta$ relaxation step)}
\State $x_{n+1}  =\argmin_{x\in X} f(x) +\frac{1}{2\sigma} \Vert x-x_n +\sigma L^* \bar{y}_{n+1}\Vert^2$ \Comment{(primal update)}
\EndFor
\caption{Primal-Dual Algorithm}
\label{alg: prima-dual no xn-1}
\end{algorithmic}
\end{algorithm}
\vspace{0.5cm}

In each step of Algorithm \ref{alg: prima-dual no xn-1}, we have to solve two minimization problems where the objectives are convex thanks to the appropriate choice of stepsize $\sigma\rho <1$, 
\begin{align}
	0 \in \partial \left( g^* +\frac{1}{2\tau} \Vert \cdot - y_n -\tau L x_n\Vert^2 \right) (y_{n+1}), \label{eq: conjugate sum proximal}\\
	0 \in \partial \left( f +\frac{1}{2\sigma} \Vert \cdot -x_n +\sigma L^* \bar{y}_{n+1} \Vert^2\right) (x_{n+1}),
	\label{eq: primal sum proximal}
\end{align}
where $\partial$ is the subdifferential in the sense of convex analysis.
We can apply the convex subdifferentials sum rule to the dual update \eqref{eq: conjugate sum proximal} as both functions are convex \nolinebreak\cite[Corollary 16.48]{Bau2011}. 
Equivalently, since $\sigma\rho<1$ and $f$ is weakly convex, we utilize the sum rule for proximal subdifferentials for weakly convex function in  Theorem \ref{theo: MReps} with $\varepsilon = 0$ to \eqref{eq: primal sum proximal}, we obtain
\begin{align}
	\frac{y_{n}-y_{n+1}}{\tau}+L x_{n}&\in\partial g^{*}\left(y_{n+1}\right),\\ 
	\frac{x_{n}-x_{n+1}}{\sigma}-L^{*} \bar{y}_{n+1} &\in\partial_{\rho}f\left(x_{n+1}\right). 
 \label{eq: pd prox update in subdiff}
\end{align}
Without the condition $\sigma\rho <1$, the above equivalence between \eqref{eq: primal sum proximal} and \eqref{eq: pd prox update in subdiff} do not hold.
For a detailed discussion of the splitting rule of weakly convex functions, the reader is referred to the work of \nolinebreak\cite{bednarczuk2023calculus}.

Let us estimate the Lagrangian values for the iterates generated by Algorithm \ref{alg: prima-dual no xn-1} for the Lagrangian given in \eqref{eq: lagrangians}.

\begin{proposition}
	\label{prop: pd-yx Lagrange estimation}
	Let $X,Y$ be Hilbert spaces, $f: X\to (-\infty,+\infty]$ be proper lsc $\rho$-weakly convex function and $g: Y\to (-\infty,+\infty]$ be proper lsc convex defined on Hilbert spaces $X$ and $Y$, respectively, and $L:X\to Y$ be a bounded linear operator. Let $(x_n)_{n\in\N}, (y_n)_{n\in\N}$ be the primal and dual sequences generated according to formulas in Algorithm \ref{alg: prima-dual no xn-1}. Then for any $(x,y)\in X\times Y$, we have
	\begin{align}
		& \mathcal{L}\left(x,y_{n+1}\right)-\mathcal{L}\left(x_{n+1},y\right)\nonumber\\
		\geq  & \frac{1- \sqrt{\sigma\tau}\Vert L \Vert}{2\tau} \left\Vert y-y_{n+1}\right\Vert ^{2}-\frac{1}{2\tau}\left\Vert y-y_{n}\right\Vert ^{2} + \frac{1- \theta\sqrt{\sigma\tau}\Vert L \Vert}{2\tau}\left\Vert y_{n}-y_{n+1}\right\Vert ^{2}\nonumber\\
		&+\frac{1-\sigma\rho - \theta\sqrt{\sigma\tau}\Vert L \Vert}{2\sigma} \left\Vert x-x_{n+1}\right\Vert ^{2}- \frac{1}{2\sigma}\left\Vert x-x_{n}\right\Vert ^{2}+ \frac{1-\sqrt{\sigma\tau}\Vert L \Vert}{2\sigma}\left\Vert x_{n}-x_{n+1}\right\Vert ^{2}.
		\label{eq: norm estimation xn xn+1}
	\end{align}
\end{proposition}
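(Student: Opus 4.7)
The plan is to follow the standard Chambolle--Pock style estimation, but with two modifications due to the $\rho$--weak convexity of $f$: I will invoke the proximal sum rule of \autoref{theo: MReps} (rather than the convex sum rule) to peel off the $\ell_2$ penalty and produce a proximal $\rho$--subgradient, and I will track an extra $\frac{\rho}{2}\|x-x_{n+1}\|^2$ slack throughout. Concretely, the condition $\sigma\rho<1$ makes the primal subproblem strongly convex, so the first-order condition \eqref{eq: pd prox update in subdiff} is valid, giving
\[
 \frac{y_n-y_{n+1}}{\tau}+Lx_n\in\partial g^*(y_{n+1}),\qquad \frac{x_n-x_{n+1}}{\sigma}-L^*\bar y_{n+1}\in\partial_\rho f(x_{n+1}).
\]
Applying the convex subgradient inequality for $g^*$ and the proximal-$\rho$ subgradient inequality from \eqref{eq:def_prox_subdiff_0} for $f$ against arbitrary $(x,y)$ and subtracting according to the definition of $\mathcal L$ yields a lower bound on $\mathcal L(x,y_{n+1})-\mathcal L(x_{n+1},y)$ consisting of two "polarization" inner products, one quadratic $-\tfrac{\rho}{2}\|x-x_{n+1}\|^2$ slack, and a bilinear block involving $L$, $\bar y_{n+1}$, $x_n$, $x_{n+1}$, $x$, $y$, $y_{n+1}$.

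Next, I will convert the two polarization products into quadratic differences via the identity $2\langle a-b,c-b\rangle=\|a-b\|^2+\|c-b\|^2-\|a-c\|^2$, which produces the three "distance to iterate" terms with respective coefficients $\tfrac{1}{2\sigma}$ and $\tfrac{1}{2\tau}$. The core algebraic step is to simplify the bilinear block. After substituting $\bar y_{n+1}=y_{n+1}+\theta(y_{n+1}-y_n)$ into $-\langle L^*\bar y_{n+1},x-x_{n+1}\rangle$ and combining with $\langle Lx,y_{n+1}\rangle-\langle Lx_{n+1},y\rangle+\langle Lx_n,y-y_{n+1}\rangle$, the terms involving $\bar y_{n+1}$ against $L(x-x_{n+1})$ telescope out cleanly, leaving only
\[
 B \;=\; \langle L(x_n-x_{n+1}),\,y-y_{n+1}\rangle + \theta\,\langle L(x_{n+1}-x),\,y_{n+1}-y_n\rangle.
\]
This cancellation is the main technical obstacle: a careless bookkeeping produces residual cross terms in $\langle Lx_n,y\rangle$ or $\langle Lx,y_n\rangle$ that do not match anything in the final bound. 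The remedy is to consistently rewrite $\langle Lx,y_{n+1}\rangle-\langle Lx_{n+1},y\rangle=\langle L(x-x_{n+1}),\bar y_{n+1}\rangle+\langle L(x-x_{n+1}),y_{n+1}-\bar y_{n+1}\rangle+\langle Lx_{n+1},y_{n+1}-y\rangle$ before substituting.

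Finally, I will bound each piece of $B$ from below using Cauchy--Schwarz and the weighted Young inequality with the balanced weights dictated by the stepsizes: for any $u\in X,v\in Y$,
\[
 \|L\|\,\|u\|\,\|v\| \;\leq\; \frac{\sqrt{\sigma\tau}\,\|L\|}{2\sigma}\|u\|^2 + \frac{\sqrt{\sigma\tau}\,\|L\|}{2\tau}\|v\|^2,
\]
which is tight since the product of the two coefficients equals $\|L\|^2/4$. Applied to $\langle L(x_n-x_{n+1}),y-y_{n+1}\rangle$ and $\theta\langle L(x_{n+1}-x),y_{n+1}-y_n\rangle$, this yields four quadratic penalties with the stated weights. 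Collecting the coefficients of $\|x-x_{n+1}\|^2$, $\|x-x_n\|^2$, $\|x_n-x_{n+1}\|^2$, $\|y-y_{n+1}\|^2$, $\|y-y_n\|^2$ and $\|y_n-y_{n+1}\|^2$ delivers exactly the factors $\tfrac{1-\sigma\rho-\theta\sqrt{\sigma\tau}\|L\|}{2\sigma}$, $-\tfrac{1}{2\sigma}$, $\tfrac{1-\sqrt{\sigma\tau}\|L\|}{2\sigma}$, $\tfrac{1-\sqrt{\sigma\tau}\|L\|}{2\tau}$, $-\tfrac{1}{2\tau}$ and $\tfrac{1-\theta\sqrt{\sigma\tau}\|L\|}{2\tau}$ appearing in \eqref{eq: norm estimation xn xn+1}, completing the proof. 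Note that the stepsize conditions $\sigma\rho<1$ and $\sqrt{\sigma\tau}\|L\|<1$ only guarantee non-negativity of the coefficients; they are not used in the derivation itself but are essential for the subsequent convergence analysis in Section~\ref{sec: primal dual yx}.
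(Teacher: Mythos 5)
Your plan is correct and follows essentially the same route as the paper's proof: the same subdifferential inclusions from the proximal updates (convex for $g^*$, proximal-$\rho$ for $f$ via the sum rule), the same reorganization of the bilinear block after substituting $\bar y_{n+1}=y_{n+1}+\theta(y_{n+1}-y_n)$, the same weighted Young bound with coefficients $\tfrac{\sqrt{\sigma\tau}\|L\|}{2\sigma}$ and $\tfrac{\sqrt{\sigma\tau}\|L\|}{2\tau}$, and the same polarization identity to collect the six quadratic terms. All the stated coefficients match \eqref{eq: norm estimation xn xn+1}.
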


\begin{proof}
	From the update of Algorithm \ref{alg: prima-dual no xn-1} and \eqref{eq: pd prox update in subdiff}, by definitions of subdifferentials, we have
	\begin{align*}
		(\forall x\in X) \ f(x) -f(x_{n+1}) & \geq \frac{1}{\sigma}\langle x_n -x_{n+1}, x-x_{n+1}\rangle - \langle L^* \bar{y}_{n+1}, x-x_{n+1}\rangle -\frac{\rho}{2}\| x-x_{n+1}\|^2 ,\\
		(\forall y\in Y) \ g^* (y) -g^* (y_{n+1}) &\geq \frac{1}{\tau}\langle y_n -y_{n+1}, y-y_{n+1}\rangle + \langle L x_{n}, y-y_{n+1}\rangle,
	\end{align*}
	Summing the above inequalities and taking into account the Lagrangian given in \eqref{eq: lagrangians}, we have
	\begin{align}
		& \mathcal{L}\left(x,y_{n+1}\right)-\mathcal{L}\left(x_{n+1},y\right) \nonumber \\
		\geq & \left\langle \frac{y_{n}-y_{n+1}}{\tau},y-y_{n+1}\right\rangle +\left\langle \frac{x_{n}-x_{n+1}}{\sigma},x-x_{n+1}\right\rangle -\frac{\rho}{2}\left\Vert x-x_{n+1}\right\Vert ^{2} \nonumber\\
		& + \langle L(x_n -x_{n+1}), y-y_{n+1}\rangle+ \langle L(x-x_{n+1}), y_{n+1}-\bar{y}_{n+1}\rangle, \nonumber\\
		= & \left\langle \frac{y_{n}-y_{n+1}}{\tau},y-y_{n+1}\right\rangle +\left\langle \frac{x_{n}-x_{n+1}}{\sigma},x-x_{n+1}\right\rangle -\frac{\rho}{2}\left\Vert x-x_{n+1}\right\Vert ^{2} \nonumber\\
		& + \langle L\left( x_{n}-x_{n+1}\right),y-y_{n+1}\rangle +\theta \langle L\left( {x} -x_{n+1}\right),y_n-y_{n+1}\rangle\\
		\geq & \left\langle \frac{y_{n}-y_{n+1}}{\tau},y-y_{n+1}\right\rangle +\left\langle \frac{x_{n}-x_{n+1}}{\sigma},x-x_{n+1}\right\rangle -\frac{\rho}{2}\left\Vert x-x_{n+1}\right\Vert ^{2} \nonumber\\
		& - \sqrt{\sigma \tau}\Vert L\Vert\left( \frac{\Vert x_n-x_{n+1}\Vert^2 }{2\sigma}  + \frac{\Vert y -y_{n+1}\Vert^2}{2\tau} + \theta\frac{\Vert x-x_{n+1}\Vert^2 }{2\sigma}  + \theta \frac{\Vert y_n -y_{n+1}\Vert^2}{2\tau} \right) \nonumber\\
		= & \frac{1- \sqrt{\sigma\tau}\Vert L \Vert}{2\tau} \left\Vert y-y_{n+1}\right\Vert ^{2}-\frac{1}{2\tau}\left\Vert y-y_{n}\right\Vert ^{2} + \frac{1- \theta\sqrt{\sigma\tau}\Vert L \Vert}{2\tau}\left\Vert y_{n}-y_{n+1}\right\Vert ^{2}\nonumber\\
		&+\frac{1-\sigma\rho - \theta\sqrt{\sigma\tau}\Vert L \Vert}{2\sigma} \left\Vert x-x_{n+1}\right\Vert ^{2}- \frac{1}{2\sigma}\left\Vert x-x_{n}\right\Vert ^{2}+ \frac{1-\sqrt{\sigma\tau}\Vert L \Vert}{2\sigma}\left\Vert x_{n}-x_{n+1}\right\Vert ^{2} ,\nonumber
		\label{eq: dual-primal Lagrangian estimation}
	\end{align}
	which is \eqref{eq: norm estimation xn xn+1}.
\end{proof}

Noted that we only require $\sigma\rho <1$ in Proposition~\ref{prop: pd-yx Lagrange estimation} to solve a convex minimization sub-problem in the primal update of Algorithm~\ref{alg: prima-dual no xn-1}. 
To investigate the convergence of Algorithm~\ref{alg: prima-dual no xn-1}, we will utilize the inf-sharpness assumption and estimation \eqref{eq: norm estimation xn xn+1}.
We show that if $(x_{n},y_{n})$ are close enough to the solution set $S$, then we have decreasing property for the distance of $(x_n,y_n)$ to the set $S$.
\begin{proposition}
	\label{prop: pd-yx contraction bhvr}
	Let $X$ and $Y$ be Hilbert, $f:X\to (-\infty,+\infty]$ be a proper lsc $\rho$-weakly convex function, $g:Y\to (-\infty,+\infty]$ be proper lsc convex and $L:X\to Y$ be a bounded linear operator. Let $(x_n)_{n\in\N},(y_n)_{n\in\N}$ be the sequences generated by Algorithm \ref{alg: prima-dual no xn-1}. Let us assume that the stepsizes $\tau,\sigma$ satisfy the inequality $\sigma\rho+\theta\sqrt{\sigma\tau}\Vert L\Vert <1$, where $\theta\in[0,1]$, and the Lagrangian is inf-sharp in the sense of Definition \ref{def: sharpness inf-Lagrangian} with respect to the set of saddle point $S\neq\emptyset$. At iteration $n\in\N$, we have
	\begin{equation}
		\label{prop:eq pd-yx ditance bounds}
		\dist ((x_{n},y_{n}),S) \leq \frac{\mu }{\max\left\{ \frac{1}{2\sigma} , \frac{1}{2\tau}\right\} - A} 
		\ \Longrightarrow \
		\dist ((x_{n+1},y_{n+1}),S) \leq \frac{\mu }{\max\left\{ \frac{1}{2\sigma} , \frac{1}{2\tau}\right\} - A}, 
	\end{equation}
	where $A := \min\left\{ \frac{1- \sqrt{\sigma\tau}\Vert L \Vert}{2\tau}, \frac{1-\sigma\rho - \theta\sqrt{\sigma\tau}\Vert L \Vert}{2\sigma} \right\}$.
	Moreover,
	\begin{equation}
		\label{prop: eq pd-yx contraction}
		\dist ((x_{n},y_{n}),S) \geq \dist ((x_{n+1},y_{n+1}),S).
	\end{equation}
\end{proposition}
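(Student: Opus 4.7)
The plan is to combine the Lagrangian estimate \eqref{eq: norm estimation xn xn+1} from Proposition \ref{prop: pd-yx Lagrange estimation}, evaluated at an (approximate) projection of $(x_n,y_n)$ onto $S$, with the inf-sharpness assumption \eqref{eq: def sharpness inf-Lagrangian}. The objective is to reduce everything to a single scalar quadratic inequality in $d_n := \dist((x_n,y_n),S)$ and $d_{n+1} := \dist((x_{n+1},y_{n+1}),S)$, from which both claims follow by elementary algebra.

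For an arbitrary $\varepsilon>0$, I would pick $(\bar x,\bar y)\in S$ with $\|x_n-\bar x\|^2 + \|y_n-\bar y\|^2 \leq d_n^2 + \varepsilon$ (needed because $S$ is not assumed closed). Substituting $(x,y)=(\bar x,\bar y)$ into \eqref{eq: norm estimation xn xn+1}, the saddle-point inequality gives $\mathcal L(\bar x,y_{n+1}) - \mathcal L(x_{n+1},\bar y)\leq 0$. The two coefficients $(1-\theta\sqrt{\sigma\tau}\|L\|)/(2\tau)$ and $(1-\sqrt{\sigma\tau}\|L\|)/(2\sigma)$ multiplying $\|y_n-y_{n+1}\|^2$ and $\|x_n-x_{n+1}\|^2$ are nonnegative under the standing assumption $\sigma\rho+\theta\sqrt{\sigma\tau}\|L\|<1$, so these terms may be discarded. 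Writing $\alpha := (1-\sigma\rho - \theta\sqrt{\sigma\tau}\|L\|)/(2\sigma)$ and $\beta := (1-\sqrt{\sigma\tau}\|L\|)/(2\tau)$, one gets
\begin{equation*}
\mathcal L(x_{n+1},\bar y) - \mathcal L(\bar x,y_{n+1})
\leq \tfrac{1}{2\sigma}\|\bar x-x_n\|^2 + \tfrac{1}{2\tau}\|\bar y-y_n\|^2 - \alpha\|\bar x-x_{n+1}\|^2 - \beta\|\bar y-y_{n+1}\|^2.
\end{equation*}

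Setting $M := \max\{1/(2\sigma),1/(2\tau)\}$ and recalling $A = \min\{\alpha,\beta\}$, the right-hand side is bounded above by $M(d_n^2+\varepsilon) - A\,d_{n+1}^2$, using the one-sided bound $\|\bar x-x_{n+1}\|^2+\|\bar y-y_{n+1}\|^2 \geq d_{n+1}^2$. On the left side, by definition of $\mathcal H_{\mathcal L}$ and the inf-sharpness hypothesis, $\mathcal L(x_{n+1},\bar y) - \mathcal L(\bar x,y_{n+1}) \geq \mathcal H_{\mathcal L}(x_{n+1},y_{n+1}) \geq \mu\, d_{n+1}$. Letting $\varepsilon\downarrow 0$ gives the key scalar inequality
\begin{equation*}
A\,d_{n+1}^2 + \mu\,d_{n+1} \leq M\,d_n^2.
\end{equation*}
The implication \eqref{prop:eq pd-yx ditance bounds} follows by solving this quadratic: $d_{n+1} \leq (-\mu + \sqrt{\mu^2 + 4AM d_n^2})/(2A)$, and a direct squaring check shows that this upper bound is at most $R := \mu/(M-A)$ precisely when $d_n\leq R$. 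The contraction \eqref{prop: eq pd-yx contraction} then follows by contradiction: if $d_{n+1}>d_n\geq 0$ then $Md_n^2 < Md_{n+1}^2$, which combined with the key inequality forces $(M-A)d_{n+1}^2 > \mu\,d_{n+1}$, hence $d_{n+1}>R$, contradicting \eqref{prop:eq pd-yx ditance bounds}.

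The main delicate point is the possible non-attainment of the infima defining $\dist(\cdot,S)$ and $\mathcal H_{\mathcal L}$; the $\varepsilon$-approximate-projection device handles this cleanly since inf-sharpness only requires an inequality on the infimum on the left side of \eqref{eq: def sharpness inf-Lagrangian} and can therefore be compared with the value at any particular $(\bar x,\bar y)\in S$. A secondary subtlety is to anchor $(\bar x,\bar y)$ to $(x_n,y_n)$ rather than to $(x_{n+1},y_{n+1})$: this turns the $\|\bar\cdot-\cdot_n\|^2$ terms into $d_n^2 + O(\varepsilon)$, while leaving only the one-sided bound $\|\bar\cdot-\cdot_{n+1}\|^2 \geq d_{n+1}^2$, which is exactly what is needed on the opposite side of the resulting quadratic.
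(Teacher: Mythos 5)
Your proposal is correct and follows essentially the same route as the paper: both substitute saddle points into the estimate of Proposition \ref{prop: pd-yx Lagrange estimation}, discard the nonnegative increment terms, invoke inf-sharpness via $\mathcal{L}(x_{n+1},\bar y)-\mathcal{L}(\bar x,y_{n+1})\geq \mathcal{H}_{\mathcal L}(x_{n+1},y_{n+1})\geq\mu\,d_{n+1}$, and reduce everything to the scalar inequality $A\,d_{n+1}^2+\mu\,d_{n+1}\leq M\,d_n^2$, from which both claims follow by the same quadratic-root computation. Your $\varepsilon$-approximate projection and the contradiction argument for the contraction are only cosmetic variants of the paper's taking infima over $S$ on both sides and its direct verification.
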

\begin{proof}
	Let us prove \eqref{prop: eq pd-yx contraction} first. By \eqref{eq: norm estimation xn xn+1} from Proposition \ref{prop: pd-yx Lagrange estimation}, for any $(x,y)\in X\times Y$, we have
	\begin{align}
		& \mathcal{L}\left(x,y_{n+1}\right)-\mathcal{L}\left(x_{n+1},y\right)\nonumber + \frac{1}{2\sigma}\left\Vert x-x_{n}\right\Vert ^{2}+\frac{1}{2\tau}\left\Vert y-y_{n}\right\Vert ^{2}\\
		\geq  & \frac{1- \sqrt{\sigma\tau}\Vert L \Vert}{2\tau} \left\Vert y-y_{n+1}\right\Vert ^{2} + \frac{1- \theta\sqrt{\sigma\tau}\Vert L \Vert}{2\tau}\left\Vert y_{n}-y_{n+1}\right\Vert ^{2}\nonumber\\
		&+\frac{1-\sigma\rho - \theta\sqrt{\sigma\tau}\Vert L \Vert}{2\sigma} \left\Vert x-x_{n+1}\right\Vert ^{2} + \frac{1-\sqrt{\sigma\tau}\Vert L \Vert}{2\sigma}\left\Vert x_{n}-x_{n+1}\right\Vert ^{2} \nonumber\\
		\geq & \frac{1- \sqrt{\sigma\tau}\Vert L \Vert}{2\tau} \left\Vert y-y_{n+1}\right\Vert ^{2} + \frac{1-\sigma\rho - \theta\sqrt{\sigma\tau}\Vert L \Vert}{2\sigma} \left\Vert x-x_{n+1}\right\Vert ^{2} \nonumber\\
		\geq & A \inf_{(x,y)\in S} \left\Vert x-x_{n+1}\right\Vert ^{2} +\left\Vert y-y_{n+1}\right\Vert ^{2} = A \dist^2 ((x_{n+1},y_{n+1}),S),
		\label{eq: proof Lagrange norm-1}
	\end{align}
	where $A := \min\left\{ \frac{1- \sqrt{\sigma\tau}\Vert L \Vert}{2\tau}, \frac{1-\sigma\rho - \theta\sqrt{\sigma\tau}\Vert L \Vert}{2\sigma} \right\} >0$ thanks to the assumption on the stepsizes.
	The last inequality in \eqref{eq: proof Lagrange norm-1} holds as we can take any $(x,y)\in S$.
	Rearrange both sides of \eqref{eq: proof Lagrange norm-1}, we obtain
	\begin{align}
		\frac{1}{2\sigma}\left\Vert x-x_{n}\right\Vert ^{2}+\frac{1}{2\tau}\left\Vert y-y_{n}\right\Vert ^{2}
		\geq & A \dist^2 ((x_{n+1},y_{n+1}),S) + \mathcal{L}\left(x_{n+1},y\right) - \mathcal{L}\left(x,y_{n+1}\right).
		\label{eq: proof Lagrange norm-2}
	\end{align}
	Let us take infimum on both sides of \eqref{eq: proof Lagrange norm-2} with respect to $(x,y)\in S$, 
	we obtain
	\begin{align}
		\label{eq: proof Lagrange dist-1}
		\max\left\{ \frac{1}{2\sigma}, \frac{1}{2\tau}\right\} \dist^2 ((x_n,y_n),S) 
		& \geq A \dist^2 ((x_{n+1},y_{n+1}),S) +\underbrace{\inf_{(x,y)\in S} \left\{\mathcal{L}\left(x_{n+1},y\right) -\mathcal{L}\left(x,y_{n+1}\right)\right\}}_{=\mathcal{H}_\mathcal{L} (x_{n+1},y_{n+1})}.
	\end{align}
	We apply sharpness assumption to \eqref{eq: proof Lagrange dist-1} and obtain
	\begin{equation}
		\max\left\{ \frac{1}{2\sigma}, \frac{1}{2\tau}\right\} \dist^2 ((x_n,y_n),S) \geq A \dist^2 ((x_{n+1},y_{n+1}),S) + \mu \dist ((x_{n+1},y_{n+1}),S).
		\label{eq: proof Lagrange dist-2}
	\end{equation}
	To obtain \eqref{prop: eq pd-yx contraction}, we need
	\begin{align*}
		A \dist^2 ((x_{n+1},y_{n+1}),S) + \mu \dist ((x_{n+1},y_{n+1}),S) 
		&\geq \max\left\{ \frac{1}{2\sigma}, \frac{1}{2\tau}\right\} \dist^2 ((x_{n+1},y_{n+1}),S), 
	\end{align*}
	which holds when
	\begin{equation}
		\label{eq: proof bound dn+1}
		\dist ((x_{n+1},y_{n+1}),S) \leq \frac{\mu }{\max\left\{ \frac{1}{2\sigma}, \frac{1}{2\tau}\right\} - A}.
	\end{equation}
	The quantity on RHS of \eqref{eq: proof bound dn+1} is well-defined as 
	\[
	\max\left\{ \frac{1}{2\sigma}, \frac{1}{2\tau}\right\} \geq \min\left\{ \frac{1}{2\sigma}, \frac{1}{2\tau}\right\} > \min\left\{ \frac{1- \sqrt{\sigma\tau}\Vert L \Vert}{2\tau}, \frac{1-\sigma\rho - \theta\sqrt{\sigma\tau}\Vert L \Vert}{2\sigma} \right\},
	\]
	thanks to the assumptions on the stepsizes $\tau,\sigma$. Next, we will prove that for any $n\in\N$
	\[
	\dist ((x_{n},y_{n}),S) \leq \frac{\mu }{\max\left\{ \frac{1}{2\sigma} , \frac{1}{2\tau}\right\} - A} \Rightarrow 
	\dist ((x_{n+1},y_{n+1}),S) \leq \frac{\mu }{\max\left\{ \frac{1}{2\sigma} , \frac{1}{2\tau}\right\} - A}.
	\]
	Let us start from \eqref{eq: proof Lagrange dist-2}
	\begin{align}
		\max\left\{ \frac{1}{2\sigma}, \frac{1}{2\tau}\right\} \frac{\mu^2}{ \left(\max\left\{ \frac{1}{2\sigma}, \frac{1}{2\tau}\right\} - A \right)^2} 
		&\geq \max\left\{ \frac{1}{2\sigma}, \frac{1}{2\tau}\right\} \dist^2 ((x_n,y_n),S) \nonumber\\
		& \geq A \dist^2 ((x_{n+1},y_{n+1}),S) + \mu \dist ((x_{n+1},y_{n+1}),S).
		\label{eq: proof Lagrange dist xn+1 bound-1}
	\end{align}
	Inequality \eqref{eq: proof Lagrange dist xn+1 bound-1} becomes a quadratic inequality with respect to $\dist ((x_{n+1},y_{n+1}),S)$. The quadratic equation taken from \eqref{eq: proof Lagrange dist xn+1 bound-1} has two distinct solutions, which implies that
	\[
	0 \leq \dist ((x_{n+1},y_{n+1}),S) \leq \frac{\mu }{\max\left\{ \frac{1}{2\sigma} , \frac{1}{2\tau}\right\} - A}.
	\]
\end{proof}

\begin{remark}
  When the iterates are close enough to the solution $S$, the sharpness of order $1$ implies the sharpness of order $2$, which is known as the quadratic growth condition. It has been shown in \cite[Theorem 3.1]{liao2024error} that for a weakly convex function (see  formula \eqref{eq: weak convex of Gap} for the weak convexity of the duality gap function $\mathcal{G}_\mathcal{L}$), quadratic growth condition is equivalent to subdifferential error bound and P{\L} inequality, which are well-known regularity conditions used to achieve linear convergence rate.
   On the other hand, inf-sharpness of function $\mathcal{H}_\mathcal{L}$ (Definition \ref{def: sharpness inf-Lagrangian}) implies sharpness of order $1$ of function $\mathcal{H}_\mathcal{L}$.
\end{remark}

When inequality in \eqref{prop:eq pd-yx ditance bounds} is strict, we obtain convergence of the distance function.
\begin{corollary}
	\label{cor: pd-yx dist to zero}
	Let $X$ and $Y$ be Hilbert, $f:X\to (-\infty,+\infty]$ be a proper lsc $\rho$-weakly convex function, $g:Y\to (-\infty,+\infty]$ be proper lsc convex and $L:X\to Y$ be a bounded linear operator. Let $(x_n)_{n\in\N},(y_n)_{n\in\N}$ be the sequences generated by Algorithm \ref{alg: prima-dual no xn-1}. Let us assume that $\sigma\rho+\theta\sqrt{\sigma\tau}\Vert L\Vert <1$ and the Lagrangian is inf-sharp in the sense of Definition \ref{def: sharpness inf-Lagrangian} with respect to the set of saddle points $S\neq\emptyset$. If the starting point $(x_0,y_0)$ satisfies
	\[
	\dist ((x_{0},y_{0}),S) < \frac{\mu }{\max\left\{ \frac{1}{2\sigma} , \frac{1}{2\tau}\right\} - A}, 
	\]
	where $ A = \min\left\{ \frac{1- \sqrt{\sigma\tau}\Vert L \Vert}{2\tau}, \frac{1-\sigma\rho - \theta\sqrt{\sigma\tau}\Vert L \Vert}{2\sigma} \right\}$.
	Then $\dist((x_n,y_n),S)$ tends to zero as $n\to \infty$.
\end{corollary}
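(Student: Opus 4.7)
The plan is to reuse the scalar recursion
\begin{equation*}
M\, d_n^2 \geq A\, d_{n+1}^2 + \mu\, d_{n+1}
\end{equation*}
with $d_n := \dist((x_n,y_n),S)$ and $M := \max\{1/(2\sigma),1/(2\tau)\}$, which was established inside the proof of Proposition~\ref{prop: pd-yx contraction bhvr} (it is essentially inequality \eqref{eq: proof Lagrange dist-2} there), together with the monotonicity \eqref{prop: eq pd-yx contraction} and the invariance \eqref{prop:eq pd-yx ditance bounds}, and finish by a standard monotone-limit argument.

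First I would check that the strict hypothesis $d_0 < \mu/(M-A)$ propagates through the iterations: applying \eqref{prop:eq pd-yx ditance bounds} inductively gives $d_n \leq \mu/(M-A)$ for every $n\in\N$, and then \eqref{prop: eq pd-yx contraction} yields the chain $d_0 \geq d_1 \geq \dots \geq 0$. In particular, the sequence $(d_n)$ is non-increasing and bounded below by $0$, hence converges to some limit $d^\ast \in [0,d_0]$, which still satisfies the strict inequality $d^\ast \leq d_0 < \mu/(M-A)$.

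The second step is to pass to the limit in the recursion: sending $n\to\infty$ in $M d_n^2 \geq A d_{n+1}^2 + \mu d_{n+1}$ gives
\begin{equation*}
(M-A)(d^\ast)^2 \geq \mu\, d^\ast.
\end{equation*}
Either $d^\ast = 0$, which is precisely the conclusion of the corollary, or $d^\ast > 0$ and dividing by $d^\ast$ forces $d^\ast \geq \mu/(M-A)$, directly contradicting the strict bound established in the previous paragraph. Hence $d^\ast = 0$ and $\dist((x_n,y_n),S) \to 0$ as $n\to\infty$.

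I do not anticipate a real obstacle, because Proposition~\ref{prop: pd-yx contraction bhvr} already absorbs the technical content (inf-sharpness, the Lagrangian estimate, and the stepsize condition $\sigma\rho + \theta\sqrt{\sigma\tau}\|L\| < 1$ ensuring $A>0$ and $M-A>0$). The only point that really needs attention is the role of the \emph{strict} inequality in the hypothesis on $(x_0,y_0)$: with equality allowed, the contradiction step would fail and one could only conclude $d^\ast \in \{0\}\cup [\mu/(M-A),\infty)$; strictness rules out the non-zero branch and is precisely what upgrades the invariance of Proposition~\ref{prop: pd-yx contraction bhvr} to convergence to zero.
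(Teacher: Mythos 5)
Your argument is correct, but it takes a genuinely different route from the paper. Both proofs start from the same scalar recursion $M d_n^2 \geq A d_{n+1}^2 + \mu d_{n+1}$ (inequality \eqref{eq: proof Lagrange dist-2} in the proof of Proposition \ref{prop: pd-yx contraction bhvr}) together with the invariance \eqref{prop:eq pd-yx ditance bounds} and monotonicity \eqref{prop: eq pd-yx contraction}. From there you run a soft monotone-limit argument: $(d_n)$ is non-increasing and bounded below, hence converges to some $d^\ast \leq d_0 < \mu/(M-A)$, and passing to the limit in the recursion forces $(M-A)(d^\ast)^2 \geq \mu d^\ast$, which is incompatible with $0 < d^\ast < \mu/(M-A)$; your observation about why strictness of the initial bound is indispensable is exactly right. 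The paper instead uses monotonicity quantitatively: since $d_{n+1} \leq d_0$, it bounds $\mu/d_{n+1} \geq \mu/d_0$ and rewrites the recursion as $\bigl(A + \mu/d_0\bigr) d_{n+1}^2 \leq M d_n^2$, i.e.\ $d_{n+1}^2 \leq B\, d_n^2$ with $B = M/\bigl(A + \mu/d_0\bigr) < 1$, yielding an explicit geometric decay $d_{n}^2 \leq B^{\,n-2} d_0^2$. The trade-off: your version is shorter and needs only qualitative information, but it gives no rate; the paper's version costs one extra line and delivers linear convergence, and this rate is not cosmetic — it is reused in the proof of Theorem \ref{theo: pd-yx seq convergence}, where the geometric factor $B^{\frac{i-1}{2}}$ makes $\sum_i \Vert (x_i,y_i)-(x_{i+1},y_{i+1})\Vert$ summable and hence the iterates Cauchy. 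So your proof fully establishes the corollary as stated, but if you intend to continue to the convergence of the iterates themselves, you would still need to extract the quantitative contraction the paper derives here.
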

\begin{proof}
	By assumption and Proposition \ref{prop: pd-yx contraction bhvr}, $\dist((x_n,y_n),S)$ is non-increasing and 
	\[
	\dist ((x_{n},y_{n}),S) < \frac{\mu }{\max\left\{ \frac{1}{2\sigma} , \frac{1}{2\tau}\right\} - A},
	\]
	for all $n\in \N$. Moreover, we have
	\begin{align}
		\left( A+\frac{\mu}{\dist((x_{0},y_{0}),S)}\right) \dist^2 ((x_{n+1},y_{n+1}),S) &
		\leq \left( A+\frac{\mu}{\dist((x_{n+1},y_{n+1}),S)}\right) \dist^2 ((x_{n+1},y_{n+1}),S) \nonumber \\
		& \leq \max\left\{ \frac{1}{2\sigma}, \frac{1}{2\tau}\right\} \dist^2 ((x_n,y_n),S).
		\label{cor: eq proof contraction with dist0}
	\end{align}
	Inequality \eqref{cor: eq proof contraction with dist0} implies that 
	\begin{equation}
		\label{cor: eq proof dn+1 < d0}
		\dist^2 ((x_{n+1},y_{n+1}),S) \leq (B)^{n-1} \dist^2 ((x_{0},y_{0}),S),
	\end{equation}
	where 
	$B := \frac{ \max\left\{ \frac{1}{2\sigma}, \frac{1}{2\tau}\right\} }{ A+\frac{\mu}{\dist((x_{0},y_{0}),S)} }.
	$
	By assumption on $\dist((x_{0},y_{0}),S)$, we have $0 < B < 1$ since
	\[
	A+\frac{\mu}{\dist((x_{0},y_{0}),S)} >  \max\left\{ \frac{1}{2\sigma}, \frac{1}{2\tau}\right\}.
	\]
	As \eqref{cor: eq proof dn+1 < d0} holds for any $n\in\N$, we let $n\to \infty$ and we infer that 
	\[
	\lim_{n\to\infty} \dist((x_{n},y_{n}),S) = 0.
	\]
\end{proof}

Next, we prove the convergence of $(x_n,y_n)_{n\in\N}$ to a saddle point in $S$.
\begin{theorem}
	\label{theo: pd-yx seq convergence}
	Let $X$ and $Y$ be Hilbert, $f:X\to (-\infty,+\infty]$ be a proper lsc $\rho$-weakly convex function, $g:Y\to (-\infty,+\infty]$ be proper lsc convex and $L:X\to Y$ be a bounded linear operator. Let $(x_n)_{n\in\N},(y_n)_{n\in\N}$ be the sequences generated by Algorithm \ref{alg: prima-dual no xn-1}. Let us assume that the stepsizes $\tau,\sigma$ satisfy $\sigma\rho+\theta\sqrt{\sigma\tau}\Vert L\Vert <1$, and the Lagrangian is inf-sharp in the sense of Definition \ref{def: sharpness inf-Lagrangian} with respect to the set of saddle point $S\neq\emptyset$. If the starting point $(x_0,y_0)$ satisfies
	\[
	\dist ((x_{0},y_{0}),S) < \frac{\mu }{\max\left\{ \frac{1}{2\sigma} , \frac{1}{2\tau}\right\} - A}, 
	\]
	where $A = \min\left\{ \frac{1- \sqrt{\sigma\tau}\Vert L \Vert}{2\tau}, \frac{1-\sigma\rho - \theta\sqrt{\sigma\tau}\Vert L \Vert}{2\sigma} \right\}.$
	Then $(x_n,y_n)$ converges to a saddle point.
\end{theorem}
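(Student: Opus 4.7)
The proof plan builds on Corollary~\ref{cor: pd-yx dist to zero}, which already provides geometric decay $\dist(z_n,S)\to 0$ where $z_n:=(x_n,y_n)$; what remains is to upgrade this to strong convergence of the whole sequence to a single element of $S$. I would proceed in three steps: (i) derive a bound $\|z_{n+1}-z_n\|\le C\,\dist(z_n,S)$ by a second use of Proposition~\ref{prop: pd-yx Lagrange estimation} with a near-projection test point; (ii) combine this with the geometric decay to obtain summability of the increments, hence strong convergence of $(z_n)$; (iii) identify the strong limit as an element of $S$ via closedness of $S$.

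For step (i), I would fix $\varepsilon>0$, choose $(\bar{x}_n,\bar{y}_n)\in S$ with $\|z_n-(\bar{x}_n,\bar{y}_n)\|^2\le \dist^2(z_n,S)+\varepsilon$, and specialize inequality \eqref{eq: norm estimation xn xn+1} to $(x,y)=(\bar{x}_n,\bar{y}_n)$. Since $(\bar{x}_n,\bar{y}_n)$ is a saddle point of $\mathcal{L}$, the left-hand side $\mathcal{L}(\bar{x}_n,y_{n+1})-\mathcal{L}(x_{n+1},\bar{y}_n)$ is nonpositive, so after discarding the nonnegative terms in $\|\bar{x}_n-x_{n+1}\|^2$ and $\|\bar{y}_n-y_{n+1}\|^2$ on the right-hand side one arrives at
\[
\tfrac{1}{2\sigma}\|\bar{x}_n-x_n\|^2+\tfrac{1}{2\tau}\|\bar{y}_n-y_n\|^2 \;\ge\; \tfrac{1-\sqrt{\sigma\tau}\|L\|}{2\sigma}\|x_n-x_{n+1}\|^2+\tfrac{1-\theta\sqrt{\sigma\tau}\|L\|}{2\tau}\|y_n-y_{n+1}\|^2.
\]
Upper-bounding the left by $\max\{1/(2\sigma),1/(2\tau)\}(\dist^2(z_n,S)+\varepsilon)$, lower-bounding the right by the minimum of the two coefficients times $\|z_n-z_{n+1}\|^2$, and letting $\varepsilon\to 0$, one obtains the desired bound with an explicit constant $C>0$.

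For step (ii), inequality \eqref{cor: eq proof dn+1 < d0} in Corollary~\ref{cor: pd-yx dist to zero} gives $\dist(z_n,S)\le B^{(n-1)/2}\dist(z_0,S)$ with $B\in(0,1)$, so by step (i) the series $\sum_{n\ge 0}\|z_{n+1}-z_n\|$ is dominated by a convergent geometric series. Hence $(z_n)$ is Cauchy in the Hilbert space $X\times Y$ and converges strongly to some $z^*$. Selecting $w_n\in S$ with $\|z_n-w_n\|\le \dist(z_n,S)+1/(n+1)$ yields $w_n\to z^*$ as well.

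For step (iii), I would use the characterization $(x^*,y^*)\in S \Leftrightarrow Lx^*\in\partial g^*(y^*)$ and $-L^*y^*\in\partial_\rho f(x^*)$, together with closedness of the graphs of the two subdifferentials, to conclude $z^*\in S$ and thus convergence to a saddle point. For the convex lsc function $g^*$, the closed graph of $\partial g^*$ is classical. For the proximal subdifferential of $f$, closedness follows from the identity $v\in\partial_\rho f(x)\Leftrightarrow v+\rho x\in\partial(f+\tfrac{\rho}{2}\|\cdot\|^2)(x)$, in which $f+\tfrac{\rho}{2}\|\cdot\|^2$ is convex lsc and hence has a subdifferential of closed graph. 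I expect the main obstacle to be precisely this last verification, since weak convexity forces one to leave the classical maximal monotone framework available in the fully convex case; by contrast, steps (i) and (ii) amount to bookkeeping around the estimate already in hand in Proposition~\ref{prop: pd-yx Lagrange estimation}.
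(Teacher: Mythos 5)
Your steps (i) and (ii) are correct and essentially reproduce the paper's own argument: the paper also combines the estimate of Proposition~\ref{prop: pd-yx Lagrange estimation} over points of $S$ to get
\[
\max\left\{ \tfrac{1}{2\sigma}, \tfrac{1}{2\tau}\right\} \dist^2 ((x_n,y_n),S) \geq \min \left\{ \tfrac{1- \theta\sqrt{\sigma\tau}\Vert L \Vert}{2\tau} , \tfrac{1-\sqrt{\sigma\tau}\Vert L \Vert}{2\sigma} \right\} \Vert (x_n,y_n)- (x_{n+1},y_{n+1})\Vert^2_{X\times Y},
\]
takes square roots, sums the resulting geometric series using Corollary~\ref{cor: pd-yx dist to zero}, and concludes that the sequence is Cauchy. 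The only cosmetic difference is that you make the left-hand side of \eqref{eq: norm estimation xn xn+1} nonpositive via the saddle-point inequalities at a near-projection, whereas the paper keeps the terms $A \dist^2 ((x_{n+1},y_{n+1}),S) + \mu \dist ((x_{n+1},y_{n+1}),S)$ coming from inf-sharpness and then discards them; both yield the same bound.

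Step (iii), however, contains a genuine error. The claimed equivalence $(x^*,y^*)\in S \Leftrightarrow Lx^*\in\partial g^*(y^*) \text{ and } -L^*y^*\in\partial_\rho f(x^*)$ holds only in the direction $\Rightarrow$ when $f$ is merely weakly convex: the condition $-L^*y^*\in\partial_\rho f(x^*)$ gives $\mathcal{L}(x,y^*)\ge \mathcal{L}(x^*,y^*)-\tfrac{\rho}{2}\Vert x-x^*\Vert^2$ for all $x$, which does not imply that $x^*$ globally minimizes $\mathcal{L}(\cdot,y^*)$, i.e.\ does not recover the second saddle-point inequality. So passing to the limit along the (indeed closed) graphs of $\partial g^*$ and $\partial_\rho f$ only shows that $z^*$ is a critical point, not that it is a saddle point. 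The detour is also unnecessary: you already produced $w_n\in S$ with $w_n\to z^*$ in step (ii), so it suffices to observe that $S$ is closed. This follows, e.g., because $S=\{(x,y): (f+g\circ L)(x)+f^*(-L^*y)+g^*(y)\le 0\}$ is a sublevel set of a lower semicontinuous function (weak duality makes this quantity nonnegative everywhere, and zero exactly on $S$); equivalently, one can argue as the paper does, via the $1$-Lipschitz continuity of $\dist(\cdot,S)$, that $\dist(z^*,S)=\lim_n\dist(z_n,S)=0$. With that one-line replacement for step (iii), your proof is complete and coincides in substance with the paper's.
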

\begin{proof}
	By \eqref{eq: norm estimation xn xn+1} from Proposition \ref{prop: pd-yx Lagrange estimation} and the inf-sharpness of Lagrangian, we have
	\begin{align}
		& \max\left\{ \frac{1}{2\sigma}, \frac{1}{2\tau}\right\} \dist^2 ((x_n,y_n),S)\nonumber\\
		& \geq A \dist^2 ((x_{n+1},y_{n+1}),S) + \mu \dist ((x_{n+1},y_{n+1}),S)\nonumber\\
		& + \frac{1- \theta\sqrt{\sigma\tau}\Vert L \Vert}{2\tau}\left\Vert y_{n}-y_{n+1}\right\Vert ^{2} + \frac{1-\sqrt{\sigma\tau}\Vert L \Vert}{2\sigma}\left\Vert x_{n}-x_{n+1}\right\Vert ^{2} \nonumber\\
		& \geq \min \left\{ \frac{1- \theta\sqrt{\sigma\tau}\Vert L \Vert}{2\tau} , \frac{1-\sqrt{\sigma\tau}\Vert L \Vert}{2\sigma} \right\} \Vert (x_n,y_n)- (x_{n+1},y_{n+1})\Vert^2_{X\times Y}.
		\label{theo: eq proof dist xn xn+1}
	\end{align}
	Taking square root on both sides of \eqref{theo: eq proof dist xn xn+1} and summing up till $N\in\N$, combining with the result from Corollary \ref{cor: pd-yx dist to zero}, we obtain
	\begin{align}
		C  \sum_{i=0}^N B^{\frac{i-1}{2}} & \geq \sum_{i=0}^N \Vert (x_i,y_i)- (x_{i+1},y_{i+1})\Vert_{X\times Y},
		\label{theo: eq proof sum xn xn+1}
	\end{align}
	where
	\[
	B = \frac{ \max\left\{ \frac{1}{2\sigma}, \frac{1}{2\tau}\right\} }{ A+\frac{\mu}{\dist((x_{0},y_{0}),S)} }, \quad 
	C := \sqrt{ \frac{ \max\left\{ \frac{1}{2\sigma}, \frac{1}{2\tau}\right\}}{\min \left\{ \frac{1- \theta\sqrt{\sigma\tau}\Vert L \Vert}{2\tau} , \frac{1-\sqrt{\sigma\tau}\Vert L \Vert}{2\sigma} \right\}} } \dist ((x_0,y_0),S).
	\]
	From the proof of Corollary \ref{cor: pd-yx dist to zero} we have $0< B<1$. Letting $N\to \infty$, the LHS of \eqref{theo: eq proof sum xn xn+1} is summable which implies that 
	\[
	\sum_{i} \Vert (x_i,y_i)- (x_{i+1},y_{i+1})\Vert_{X\times Y} <+\infty.
	\]
	As $X,Y$ are Hilbert, so is $X\times Y$. By {\cite[Theorem 1]{Bolte2014_Proximal}}, $(x_n,y_n)_{n\in\N}$ is Cauchy , so it converges. Combining this with the result obtained in Corollary \ref{cor: pd-yx dist to zero} which is  $\dist((x_n,y_n),S)\to 0$ as $n\to \infty$, $(x_n,y_n)_{n\in\N}$ must converge to a saddle point in $S$.
\end{proof}

\section{Primal-Dual algorithm with primal update first}
\label{sec: primal dual xy}

In this section, we consider Algorithm \ref{alg: prima-dual no with primal update 1st}, which differs from Algorithm \ref{alg: prima-dual no xn-1} by updating the primal variable $x_n$ first instead of $y_n$. We show the convergence results similar to Proposition \ref{prop: pd-yx contraction bhvr}, and Theorem \ref{theo: pd-yx seq convergence} of the previous subsection with some differences concerning the assumption on the stepsizes $\tau,\sigma >0$.

\begin{algorithm}
\begin{algorithmic}
\State	\textbf{Initialize:} $x_0 \in \dom f \cap \dom g\circ L$ and $\bar{x}_0 = x_0$
\State {\textbf{Set:} $\tau,\sigma >0, \sigma\rho <1, \theta \in [0,1], \sqrt{\sigma \tau}\Vert L\Vert<1$} \\
\For{For $n\geq 0$} 
\State $x_{n+1} =\argmin_{x\in X} f(x) +\frac{1}{2\sigma} \Vert x-x_n +\tau L^* {y}_{n}\Vert^2 $ \Comment{(primal update)}
\State $\bar{x}_{n+1} = x_{n+1} + \theta (x_{n+1} -x_{n}) $ \Comment{($\theta$ relaxation step)}
\State $y_{n+1} =\argmin_{y\in Y} g^*(y) +\frac{1}{2\tau} \Vert y - y_n -\tau L \bar{x}_{n+1} \Vert^2 $  \Comment{(dual update)}
\EndFor
\caption{Primal-Dual Algorithm with primal update first}
	\label{alg: prima-dual no with primal update 1st}
\end{algorithmic}
\end{algorithm}
\vspace{0.5cm}

Analogously to Proposition \ref{prop: pd-yx Lagrange estimation}, we start by analyzing the behavior of the Lagrangian values given in \eqref{eq: lagrangians}.

\begin{proposition}
	\label{prop: pd-xy Lagrange estimation}
	Let $X,Y$ be Hilbert spaces, $f: X\to (-\infty,+\infty]$ be proper lsc $\rho$-weakly convex function and $g: Y\to (-\infty,+\infty]$ be proper lsc convex defined on Hilbert spaces $X$ and $Y$, respectively, and $L:X\to Y$ be a bounded linear operator. Let $(x_n)_{n\in\N}, (y_n)_{n\in\N}$ be sequences generated by Algorithm \ref{alg: prima-dual no with primal update 1st}. Then for any $(x,y)\in X\times Y$, we have
	\begin{align}
		& \mathcal{L}\left(x,y_{n+1}\right)-\mathcal{L}\left(x_{n+1},y\right)\nonumber\\
		\geq  & \frac{1- \theta\sqrt{\sigma\tau}\Vert L \Vert}{2\tau} \left\Vert y-y_{n+1}\right\Vert ^{2}-\frac{1}{2\tau}\left\Vert y-y_{n}\right\Vert ^{2} + \frac{1-  \sqrt{\sigma\tau}\Vert L \Vert}{2\tau}\left\Vert y_{n}-y_{n+1}\right\Vert ^{2}\nonumber\\
		&+\frac{1-\sigma\rho - \sqrt{\sigma\tau}\Vert L \Vert}{2\sigma} \left\Vert x-x_{n+1}\right\Vert ^{2}- \frac{1}{2\sigma}\left\Vert x-x_{n}\right\Vert ^{2}+ \frac{1-\theta \sqrt{\sigma\tau}\Vert L \Vert}{2\sigma}\left\Vert x_{n}-x_{n+1}\right\Vert ^{2}.
		\label{eq: pd-xy norm estimation xn xn+1}
	\end{align}
\end{proposition}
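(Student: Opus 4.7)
The plan is to mirror the derivation of Proposition~\ref{prop: pd-yx Lagrange estimation}, keeping track of the fact that the relaxation is now applied to the primal iterate rather than to the dual one. First, I would extract the first-order optimality conditions from the two minimization subproblems of Algorithm~\ref{alg: prima-dual no with primal update 1st}. Since $\sigma\rho<1$, the primal objective is convex, and by Theorem~\ref{theo: MReps} (with $\varepsilon=0$) applied to the weakly convex $f$ plus the quadratic penalty, the primal optimality gives $\frac{x_n-x_{n+1}}{\sigma}-L^{*}y_{n}\in\partial_{\rho}f(x_{n+1})$, while the dual update yields $\frac{y_n-y_{n+1}}{\tau}+L\bar{x}_{n+1}\in\partial g^{*}(y_{n+1})$ in the sense of convex analysis.

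Next, I would instantiate the two subdifferential inequalities at an arbitrary $(x,y)\in X\times Y$, noting that the proximal inequality for $f$ carries the extra $-\frac{\rho}{2}\|x-x_{n+1}\|^{2}$ term. Adding them and combining with the remaining bilinear pieces of $\mathcal{L}(x,y_{n+1})-\mathcal{L}(x_{n+1},y)$, all the linear-in-$L$ quantities should collapse, after substituting $\bar{x}_{n+1}=x_{n+1}+\theta(x_{n+1}-x_n)$, into exactly the two cross terms
\[
\langle L(x-x_{n+1}),\,y_{n+1}-y_{n}\rangle+\theta\langle L(x_{n+1}-x_{n}),\,y-y_{n+1}\rangle.
\]
This algebraic cancellation is the one bookkeeping step I would verify carefully, since it is where the order of the updates visibly enters: here the non-relaxed cross term pairs $(x-x_{n+1})$ with $(y_{n+1}-y_{n})$, whereas in Algorithm~\ref{alg: prima-dual no xn-1} it paired $(y-y_{n+1})$ with $(x_n-x_{n+1})$, and this swap is exactly what will exchange the positions of the $\theta$-factor in the final inequality.

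Then, for the two scalar products $\frac{1}{\sigma}\langle x_n-x_{n+1},x-x_{n+1}\rangle$ and $\frac{1}{\tau}\langle y_n-y_{n+1},y-y_{n+1}\rangle$, I would apply the standard three-point identity $2\langle u-v,w-v\rangle=\|u-v\|^{2}+\|w-v\|^{2}-\|u-w\|^{2}$. For the two cross terms above, I would invoke Cauchy--Schwarz together with Young's inequality weighted by $\sqrt{\tau/\sigma}$, which gives
\[
|\langle L u,v\rangle|\le\sqrt{\sigma\tau}\|L\|\left(\frac{\|u\|^{2}}{2\sigma}+\frac{\|v\|^{2}}{2\tau}\right),
\]
and the analogous bound multiplied by $\theta$ for the second cross term.

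Collecting coefficients of $\|x-x_{n+1}\|^{2}$, $\|x-x_{n}\|^{2}$, $\|x_{n}-x_{n+1}\|^{2}$, $\|y-y_{n+1}\|^{2}$, $\|y-y_{n}\|^{2}$, $\|y_{n}-y_{n+1}\|^{2}$ should produce exactly \eqref{eq: pd-xy norm estimation xn xn+1}. The critical check is that the $\theta$-weighted Young bound, applied to the cross term that involves $(x_{n+1}-x_{n})$ and $(y-y_{n+1})$, delivers the $\theta\sqrt{\sigma\tau}\|L\|$ factors on the $\|x_{n}-x_{n+1}\|^{2}/(2\sigma)$ and $\|y-y_{n+1}\|^{2}/(2\tau)$ terms, whereas the unweighted bound produces the factor $\sqrt{\sigma\tau}\|L\|$ (without $\theta$) on the $\|x-x_{n+1}\|^{2}/(2\sigma)$ and $\|y_{n}-y_{n+1}\|^{2}/(2\tau)$ terms. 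This is precisely the pattern swap compared with \eqref{eq: norm estimation xn xn+1}, and it is the only conceptual point in the proof; the rest is a mechanical rearrangement.
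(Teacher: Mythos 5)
Your proposal is correct and is exactly the argument the paper intends: the paper's own proof of this proposition is just the remark ``we proceed similarly to the proof of Proposition~\ref{prop: pd-yx Lagrange estimation},'' and your derivation fills in those details faithfully — the optimality conditions, the collapse of the bilinear terms to $\langle L(x-x_{n+1}),y_{n+1}-y_{n}\rangle+\theta\langle L(x_{n+1}-x_{n}),y-y_{n+1}\rangle$, and the resulting swap of the $\theta$-factors all check out against \eqref{eq: pd-xy norm estimation xn xn+1}.
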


\begin{proof}
	We proceed similarly to the proof of Proposition \ref{prop: pd-yx Lagrange estimation}.
\end{proof}

As we can see, the difference between Proposition \ref{prop: pd-yx Lagrange estimation} and Proposition \ref{prop: pd-xy Lagrange estimation} is the position of $\theta$ which appears in the term $\Vert y-y_{n+1}\Vert^2$ instead of $\Vert x-x_{n+1}\Vert^2$. Which will cause a small change in the bound of $\dist((x_n,y_n),S)$ as we see in the following result.

\begin{proposition}
	\label{prop: pd-xy contraction bhvr}
	Let $X$ and $Y$ be Hilbert, $f:X\to (-\infty,+\infty]$ be a proper lsc $\rho$-weakly convex function, $g:Y\to (-\infty,+\infty]$ be proper lsc convex and $L:X\to Y$ be a bounded linear operator. Let $(x_n)_{n\in\N},(y_n)_{n\in\N}$ be the sequences generated by Algorithm \ref{alg: prima-dual no with primal update 1st}. Let us assume that the stepsizes $\tau,\sigma$ satisfy  $\sigma\rho+\sqrt{\sigma\tau}\Vert L\Vert <1$ and the Lagrangian is inf-sharp in the sense of Definition \ref{def: sharpness inf-Lagrangian} with respect to the set of saddle points $S\neq\emptyset$. At iteration $n\in\N$, we have
	\begin{equation}
		\label{prop:eq pd-xy ditance bounds}
		\dist ((x_{n},y_{n}),S) \leq \frac{\mu }{\max\left\{ \frac{1}{2\sigma} , \frac{1}{2\tau}\right\} - A_1} \ \Longrightarrow \  
		\dist ((x_{n+1},y_{n+1}),S) \leq \frac{\mu }{\max\left\{ \frac{1}{2\sigma} , \frac{1}{2\tau}\right\} - A_1}, 
	\end{equation}
	where $A_1 := \min\left\{ \frac{1- \theta\sqrt{\sigma\tau}\Vert L \Vert}{2\tau}, \frac{1-\sigma\rho - \sqrt{\sigma\tau}\Vert L \Vert}{2\sigma} \right\}$.
	Moreover,
	\begin{equation}
		\label{prop: eq pd-xy contraction}
		\dist^2 ((x_{n},y_{n}),S) \geq \dist^2 ((x_{n+1},y_{n+1}),S).
	\end{equation}
	As a consequence, if the initials $(x_0,y_0)$,
	$
	\dist ((x_{0},y_{0}),S) < \frac{\mu }{\max\left\{ \frac{1}{2\sigma} , \frac{1}{2\tau}\right\} - A_1}.
	$
	Then $\dist((x_n,y_n),S)$ tends to zero as $n\to \infty$.
\end{proposition}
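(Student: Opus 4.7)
The plan is to mirror the proof of Proposition~\ref{prop: pd-yx contraction bhvr} (together with Corollary~\ref{cor: pd-yx dist to zero}), using the variant estimation \eqref{eq: pd-xy norm estimation xn xn+1} provided by Proposition~\ref{prop: pd-xy Lagrange estimation}. First I would rearrange \eqref{eq: pd-xy norm estimation xn xn+1} so that the terms $\frac{1}{2\sigma}\|x-x_n\|^2$ and $\frac{1}{2\tau}\|y-y_n\|^2$ appear on the left, while keeping the terms $\frac{1-\theta\sqrt{\sigma\tau}\|L\|}{2\tau}\|y-y_{n+1}\|^2$ and $\frac{1-\sigma\rho-\sqrt{\sigma\tau}\|L\|}{2\sigma}\|x-x_{n+1}\|^2$ on the right. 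Using the stepsize assumption $\sigma\rho+\sqrt{\sigma\tau}\|L\|<1$ (and $\theta\in[0,1]$), both coefficients are strictly positive and bounded below by $A_1>0$. Dropping the nonnegative $\|x_n-x_{n+1}\|^2$, $\|y_n-y_{n+1}\|^2$ terms and bounding the sum by $A_1(\|x-x_{n+1}\|^2+\|y-y_{n+1}\|^2)$, I obtain the analogue of \eqref{eq: proof Lagrange norm-1}:
\begin{equation*}
\mathcal{L}(x,y_{n+1})-\mathcal{L}(x_{n+1},y)+\tfrac{1}{2\sigma}\|x-x_n\|^2+\tfrac{1}{2\tau}\|y-y_n\|^2 \;\geq\; A_1\bigl(\|x-x_{n+1}\|^2+\|y-y_{n+1}\|^2\bigr).
\end{equation*}

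Next I take the infimum over $(x,y)\in S$. On the right the infimum of the sum of squared norms becomes $\dist^2((x_{n+1},y_{n+1}),S)$, while on the left the Lagrangian gap becomes $\mathcal{H}_\mathcal{L}(x_{n+1},y_{n+1})$, and $\frac{1}{2\sigma}\|x-x_n\|^2+\frac{1}{2\tau}\|y-y_n\|^2$ is bounded above by $\max\{\frac{1}{2\sigma},\frac{1}{2\tau}\}\dist^2((x_n,y_n),S)$. Applying the inf-sharpness of the Lagrangian then yields
\begin{equation*}
\max\Bigl\{\tfrac{1}{2\sigma},\tfrac{1}{2\tau}\Bigr\}\dist^2((x_n,y_n),S)\;\geq\; A_1\dist^2((x_{n+1},y_{n+1}),S)+\mu\,\dist((x_{n+1},y_{n+1}),S),
\end{equation*}
which is the exact analogue of \eqref{eq: proof Lagrange dist-2} with $A$ replaced by $A_1$. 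From here the implication \eqref{prop:eq pd-xy ditance bounds} follows verbatim as in the proof of \eqref{prop:eq pd-yx ditance bounds}: if $\dist((x_n,y_n),S)\leq \mu/(\max\{\frac{1}{2\sigma},\frac{1}{2\tau}\}-A_1)$, then combining the displayed inequality with the target bound on $\dist((x_{n+1},y_{n+1}),S)$ reduces to a quadratic inequality in $\dist((x_{n+1},y_{n+1}),S)$ whose admissible range is precisely the claimed interval; in particular the monotonicity property \eqref{prop: eq pd-xy contraction} is immediate.

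For the final statement, under the strict initial bound $\dist((x_0,y_0),S)<\mu/(\max\{\frac{1}{2\sigma},\frac{1}{2\tau}\}-A_1)$, the monotonicity gives $\dist((x_n,y_n),S)\leq \dist((x_0,y_0),S)$ for all $n$, so from the recursion above
\begin{equation*}
\Bigl(A_1+\tfrac{\mu}{\dist((x_0,y_0),S)}\Bigr)\dist^2((x_{n+1},y_{n+1}),S)\;\leq\;\max\Bigl\{\tfrac{1}{2\sigma},\tfrac{1}{2\tau}\Bigr\}\dist^2((x_n,y_n),S),
\end{equation*}
which gives a geometric decay with ratio $B_1 := \max\{\frac{1}{2\sigma},\frac{1}{2\tau}\}\big/\bigl(A_1+\mu/\dist((x_0,y_0),S)\bigr)\in(0,1)$, exactly as in the proof of Corollary~\ref{cor: pd-yx dist to zero}. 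Letting $n\to\infty$ yields $\dist((x_n,y_n),S)\to 0$.

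The only genuine checking is verifying $A_1>0$ under the sharper stepsize condition $\sigma\rho+\sqrt{\sigma\tau}\|L\|<1$ (rather than $\sigma\rho+\theta\sqrt{\sigma\tau}\|L\|<1$ as in Section~\ref{sec: primal dual yx}) and confirming that $\max\{\frac{1}{2\sigma},\frac{1}{2\tau}\}>A_1$ so the denominator defining the starting-radius is positive — both follow from elementary estimates identical to those used in Proposition~\ref{prop: pd-yx contraction bhvr}. I do not expect any substantive obstacle beyond keeping track of the swapped roles of $\theta$ in the coefficients.
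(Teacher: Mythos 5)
Your proposal is correct and follows exactly the route the paper intends: the paper omits this proof, stating that it "follows the same lines as the proof of Proposition~\ref{prop: pd-yx contraction bhvr} and Corollary~\ref{cor: pd-yx dist to zero}," and your argument is precisely that adaptation, with $A$ replaced by $A_1$ to account for the swapped position of $\theta$ in \eqref{eq: pd-xy norm estimation xn xn+1}. The bookkeeping points you flag ($A_1>0$ and $\max\{\frac{1}{2\sigma},\frac{1}{2\tau}\}>A_1$) do check out under the stated stepsize condition, so there is no gap.
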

The proof of Proposition \ref{prop: pd-xy contraction bhvr} follows the same lines as the proof of Proposition \ref{prop: pd-yx contraction bhvr} and Corollary \ref{cor: pd-yx dist to zero} so we will not give it here. As $(\dist((x_n,y_n),S))_{n\in\N}$ with $(x_n)_{n\in\N},(y_n)_{n\in\N}$ generated by Algorithm \ref{alg: prima-dual no with primal update 1st} behaves in the same way as in the case of Algorithm \ref{alg: prima-dual no xn-1}, so is the convergence of $(x_n,y_n)_{n\in\N}$.

\begin{theorem}
	\label{theo: pd-xy seq convergence}
	Let $X$ and $Y$ be Hilbert, $f:X\to (-\infty,+\infty]$ be a proper lsc $\rho$-weakly convex function, $g:Y\to (-\infty,+\infty]$ be proper lsc convex and $L:X\to Y$ be a bounded linear operator. Let $(x_n)_{n\in\N},(y_n)_{n\in\N}$ be the sequences generated by Algorithm \ref{alg: prima-dual no with primal update 1st}. Let us assume that the stepsizes $\tau,\sigma$ satisfy $\sigma\rho+ \sqrt{\sigma\tau}\Vert L\Vert <1$ and the Lagrangian is inf-sharp in the sense of Definition \ref{def: sharpness inf-Lagrangian} with respect to the set of saddle point $S\neq\emptyset$. If the starting point $(x_0,y_0)$ satisfies
	\begin{equation}
		\label{eq: starting point of pd-xy}
		\dist ((x_{0},y_{0}),S) < \frac{\mu }{\max\left\{ \frac{1}{2\sigma} , \frac{1}{2\tau}\right\} - A_1},
	\end{equation}
	where $A_1 = \min\left\{ \frac{1- \theta\sqrt{\sigma\tau}\Vert L \Vert}{2\tau}, \frac{1-\sigma\rho - \sqrt{\sigma\tau}\Vert L \Vert}{2\sigma} \right\}$.
	Then $(x_n,y_n)$ converges to a saddle point.
\end{theorem}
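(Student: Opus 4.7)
The plan is to mirror the proof of Theorem \ref{theo: pd-yx seq convergence} line by line, swapping in the estimation from Proposition \ref{prop: pd-xy Lagrange estimation} in place of \eqref{eq: norm estimation xn xn+1}. First I would pick an arbitrary $(x,y)\in S$, plug it into \eqref{eq: pd-xy norm estimation xn xn+1}, take the infimum of both sides over $S$, and use the identity
\[
\inf_{(x,y)\in S}\bigl\{\mathcal{L}(x_{n+1},y)-\mathcal{L}(x,y_{n+1})\bigr\}=\mathcal{H}_{\mathcal{L}}(x_{n+1},y_{n+1})
\]
together with Definition \ref{def: sharpness inf-Lagrangian} to obtain the Algorithm \ref{alg: prima-dual no with primal update 1st} analogue of inequality \eqref{theo: eq proof dist xn xn+1}, namely
\begin{align*}
\max\!\left\{\tfrac{1}{2\sigma},\tfrac{1}{2\tau}\right\}\dist^{2}((x_{n},y_{n}),S)
\geq{} & A_{1}\dist^{2}((x_{n+1},y_{n+1}),S)+\mu\,\dist((x_{n+1},y_{n+1}),S)\\
& +\tfrac{1-\sqrt{\sigma\tau}\|L\|}{2\tau}\|y_{n}-y_{n+1}\|^{2}+\tfrac{1-\theta\sqrt{\sigma\tau}\|L\|}{2\sigma}\|x_{n}-x_{n+1}\|^{2}.
\end{align*}
Under the stepsize hypothesis $\sigma\rho+\sqrt{\sigma\tau}\|L\|<1$, both coefficients in the last line are strictly positive, which is the crucial structural ingredient.

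Next, Proposition \ref{prop: pd-xy contraction bhvr} together with the initialization condition \eqref{eq: starting point of pd-xy} guarantees that $(\dist((x_{n},y_{n}),S))_{n\in\N}$ is non-increasing, converges to zero, and in fact (by repeating the argument in Corollary \ref{cor: pd-yx dist to zero}) satisfies a geometric estimate
\[
\dist^{2}((x_{n+1},y_{n+1}),S)\le B^{\,n-1}\dist^{2}((x_{0},y_{0}),S),\qquad B:=\frac{\max\{\tfrac{1}{2\sigma},\tfrac{1}{2\tau}\}}{A_{1}+\tfrac{\mu}{\dist((x_{0},y_{0}),S)}}\in(0,1).
\]
Dropping the nonnegative $\dist$ terms on the right-hand side of the displayed inequality above and recognizing the sum $\|x_{n}-x_{n+1}\|^{2}+\|y_{n}-y_{n+1}\|^{2}=\|(x_{n},y_{n})-(x_{n+1},y_{n+1})\|_{X\times Y}^{2}$, I would deduce
\[
\|(x_{n},y_{n})-(x_{n+1},y_{n+1})\|_{X\times Y}\le C_{1}\,\dist((x_{n},y_{n}),S)
\]
for an explicit constant $C_{1}>0$ depending only on $\sigma,\tau,\theta,\|L\|$. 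Combining this bound with the geometric decay of $\dist((x_{n},y_{n}),S)$ in powers of $\sqrt{B}$ yields a summable majorant $\sum_{n}\sqrt{B}^{\,n}<\infty$, hence
\[
\sum_{n=0}^{\infty}\|(x_{n},y_{n})-(x_{n+1},y_{n+1})\|_{X\times Y}<+\infty.
\]

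Finally, by the same Cauchy argument as in the proof of Theorem \ref{theo: pd-yx seq convergence} (invoking \cite{Bolte2014_Proximal}), the sequence $(x_{n},y_{n})_{n\in\N}$ is Cauchy in the Hilbert space $X\times Y$ and therefore converges to some limit $(x^{*},y^{*})$; since $\dist((x_{n},y_{n}),S)\to 0$ and $S$ is closed (as the zero set of the lsc function $\mathcal{H}_{\mathcal{L}}$ that is bounded below by $\mu\,\dist(\cdot,S)$), the limit $(x^{*},y^{*})$ belongs to $S$. The main (and essentially only) obstacle is bookkeeping: the positions of $\theta$ in the per-iteration estimate \eqref{eq: pd-xy norm estimation xn xn+1} are swapped relative to \eqref{eq: norm estimation xn xn+1}, which is precisely what forces the strengthened stepsize condition $\sigma\rho+\sqrt{\sigma\tau}\|L\|<1$ to keep the coefficient $A_{1}$ and the residual coefficients positive; once these are tracked correctly, no new analytical idea is needed beyond the one used in Theorem \ref{theo: pd-yx seq convergence}.
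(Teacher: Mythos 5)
Your proposal is correct and is essentially the paper's intended argument: the paper omits the proof of Theorem \ref{theo: pd-xy seq convergence} entirely, stating only that the distance sequence for Algorithm \ref{alg: prima-dual no with primal update 1st} behaves as in Proposition \ref{prop: pd-yx contraction bhvr} and Corollary \ref{cor: pd-yx dist to zero}, so convergence follows as in Theorem \ref{theo: pd-yx seq convergence}. You have simply written out that parallel argument explicitly, with the swapped positions of $\theta$ and the resulting coefficients $A_1$, $\tfrac{1-\sqrt{\sigma\tau}\|L\|}{2\tau}$, $\tfrac{1-\theta\sqrt{\sigma\tau}\|L\|}{2\sigma}$ tracked correctly under the stepsize condition $\sigma\rho+\sqrt{\sigma\tau}\|L\|<1$.
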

The condition \eqref{eq: starting point of pd-xy} gives us the radius of an open ball around the set $S$ such that Algorithm \ref{alg: prima-dual no with primal update 1st} converges to a saddle point, provided that we start inside the ball. When starting outside the ball, the sequences can converge to other points but not the saddle point. We will observe this behavior in the numerical example below.

\section{Solution to the Primal Problem}
\label{sec: primal sharp only}

While the inf-sharpness in the sense of Definition \ref{def: sharpness inf-Lagrangian} helps us analyze the convergence of Algorithm \ref{alg: prima-dual no xn-1} and Algorithm \ref{alg: prima-dual no with primal update 1st}, one needs to know the set of saddle points (both the primal and dual solutions) to calculate $\mathcal{H}_\mathcal{K} (x,y)$.
It is interesting to solve problem \eqref{prob: composite primal} with only knowledge of the primal objective; hence, we ask ourselves how much we know about the convergence of Algorithm \ref{alg: prima-dual no xn-1} or \ref{alg: prima-dual no with primal update 1st}. Is it possible to assume sharpness condition only on the primal objective functions instead of on the Lagrangian, and how is it related to Definition \ref{def: sharpness Lagrangian nonseparable} and Definition \ref{def: sharpness inf-Lagrangian}? 

In this section, we consider the same setting as in Section \ref{sec: pd-preliminary}, where $f$ is $\rho$-weakly convex and $g$ is convex. We analyze the gap function $\mathcal{G}_\mathcal{L} (x,y)$ given by \eqref{eq: def sharpness Lagrangian inseparable} with the Lagrangian given in \eqref{eq: lagrangians},
\[
\mathcal{G}_\mathcal{L} (x,y) = \sup_{\overline{x}\in X,\overline{y}\in Y} \mathcal{L}(x,\overline{y}) - \mathcal{L} (\overline{x},y) = f(x)+g^{**}(Lx) + g^*(y) + \sup_{\overline{x}\in X} \{
-\langle L\overline{x},y \rangle -f(\overline{x})\}.
\]
Since $g$ is convex, we have $g=g^{**}$. Let us take
\begin{align}
	\inf_{y\in Y} \mathcal{G}_\mathcal{L} (x,y) & = \inf_{y\in Y} \left\{ (f+g\circ L)(x) + \sup_{\overline{x}\in X} 
	\{ -\langle L\overline{x},y \rangle -f(\overline{x}) + g^*(y) \}\right\} \nonumber\\
	& = (f+g\circ L)(x) -\sup_{y\in Y}\inf_{\overline{x} \in X} \left\{ f(\overline{x}) + \langle L\overline{x},y \rangle - g^*(y) \right\} \nonumber\\
	& \geq (f+g\circ L)(x) -\inf_{\overline{x} \in X} \sup_{y\in Y} \mathcal{L}(\overline{x},y) \nonumber\\
	& = (f+g\circ L)(x) -\inf_{\overline{x} \in X} (f+g\circ L)(\overline{x}),
	\label{eq: inf gap and primal}
\end{align}
where we use weak duality in the third inequality, and the last equality is the equivalence between Lagrangian primal \eqref{eq: lagrange primal g weakly covnex} and primal problem \eqref{prob: composite primal} thanks to the convexity of $g$.

Before discussing the convergence of the algorithm, let us give a proper definition of sharpness for problem \eqref{prob: composite primal}.

\begin{definition}[Sharpness of Primal Objective]
	\label{def: sharpness primal composite}
	We say that the objective function $f+g\circ L$ is sharp with respect to the set of minimizers $S_P = \argmind{}{(f+g\circ L)}\neq\emptyset$ if there exists a positive constant $\mu$ such that
	\begin{equation}
		\label{eq: def sharpness primal composite}
		(\forall x\in X) \quad (f+g\circ L) (x) - \inf_{z\in X } (f+g\circ L)(z) \geq\mu\text{dist} \left(x,S_P\right).
	\end{equation}
\end{definition}
Similarly, we can also define sharpness for the Lagrangian dual problem \eqref{prob: dual g convex, f weakly convex} as follow: there exists $\eta >0$ such that
\begin{equation}
	\label{eq: def sharpness dual Lagrange}
	(\forall y_0 \in Y) \quad \sup_{y\in Y}\inf_{x\in X} \mathcal{L}(x,y) - \inf_{x\in X} \mathcal{L}(x,y_0) \geq \eta \dist(y_0,S_D),
\end{equation}
where $S_D = \argmax_{y\in Y} \{\inf_{x\in X} \mathcal{L}(x,y)\}$ is the solution to the dual problem.

The relationship between the sharpness of the primal problem and the sharpness of Lagrangian is described in the next proposition.
\begin{proposition}
	Let us consider the composite problem \eqref{prob: composite primal} and its corresponding Lagrangian given in \eqref{eq: lagrangians}.
	If the Lagrangian is sharp in the sense of Definition \ref{def: sharpness Lagrangian nonseparable} then the primal objective function is sharp as in Definition \ref{def: sharpness primal composite}.
	On the other hand, if the objective primal $f+g\circ L$ is sharp according to Definition \ref{def: sharpness primal composite} and the respective Lagrange dual is sharp as in \eqref{eq: def sharpness dual Lagrange} then there exists $\xi>0$ such that
	\[
	\mathcal{G}_\mathcal{L} (x,y) \geq \xi (\dist(x,S_P) +\dist(y,S_D)),
	\]
	where $S_P,S_D$ are the solution sets of the primal \eqref{def: sharpness primal composite} and dual \eqref{eq: def sharpness dual Lagrange} problems, respectively. 
\end{proposition}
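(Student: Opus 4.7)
My plan is to base everything on a single decomposition of the duality gap function $\mathcal{G}_\mathcal L$ into a primal gap plus a dual gap. Since $g$ is convex we have $g=g^{**}$ and hence $\sup_{\bar y}\mathcal L(x,\bar y)=(f+g\circ L)(x)$, while the nonemptiness of the saddle-point set $S$ automatically forces strong duality $\inf_X(f+g\circ L)=\sup_Y\inf_X\mathcal L(\cdot,y)$. Inserting $\pm\inf_X(f+g\circ L)$ into the formula \eqref{eq: def sharpness Lagrangian inseparable} then yields the identity
\[
\mathcal{G}_{\mathcal L}(x,y) \;=\; \bigl[(f+g\circ L)(x)-\inf\nolimits_X(f+g\circ L)\bigr] \;+\; \bigl[\sup\nolimits_Y\inf\nolimits_X\mathcal L(\cdot,y)-\inf\nolimits_X\mathcal L(\cdot,y)\bigr],
\]
in which both brackets are nonnegative and match precisely the quantities controlled by Definition~\ref{def: sharpness primal composite} and \eqref{eq: def sharpness dual Lagrange}. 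The second structural ingredient I will need is the Cartesian factorization $S=S_P\times S_D$, obtained by the standard swap argument on any two saddle points using only the defining saddle-point inequalities and without any convexity of $f$.

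For the first implication I would fix any $y^{*}\in S_D$, at which the dual bracket above vanishes, so that $\mathcal G_\mathcal L(x,y^*)$ equals the primal gap of $x$. Substituting into the Lagrangian sharpness of Definition~\ref{def: sharpness Lagrangian nonseparable} at the point $(x,y^*)$ and using $\dist((x,y^*),S)=\dist(x,S_P)$ (a consequence of $S=S_P\times S_D$ together with $y^*\in S_D$) delivers the primal sharpness of Definition~\ref{def: sharpness primal composite} with the same constant $\mu$.

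For the second statement I would apply the primal sharpness with constant $\mu$ to the first bracket and the dual sharpness \eqref{eq: def sharpness dual Lagrange} with constant $\eta$ to the second bracket, both bounds being valid on all of $X\times Y$. Summing and setting $\xi:=\min\{\mu,\eta\}$ immediately produces $\mathcal G_\mathcal L(x,y)\geq\xi(\dist(x,S_P)+\dist(y,S_D))$.

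The inequalities themselves are one-line manipulations; the main care is in justifying the two structural facts used in the decomposition in the weakly-convex setting, namely strong duality and $S=S_P\times S_D$. Both are elementary consequences of the saddle-point inequality alone: the existence of one saddle point forces $\inf_X\sup_Y\mathcal L=\sup_Y\inf_X\mathcal L$ via a squeezing of the saddle value between the two min-max quantities combined with weak duality, and the swap argument on pairs of saddle points yields the factorization without appealing to convexity of $f$.
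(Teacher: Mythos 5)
Your proof is correct, and it is organized genuinely differently from the paper's. The paper never writes the exact identity $\mathcal{G}_\mathcal{L}(x,y)=\bigl[(f+g\circ L)(x)-\inf_X(f+g\circ L)\bigr]+\bigl[\sup_Y\inf_X\mathcal{L}-\inf_X\mathcal{L}(\cdot,y)\bigr]$; instead it derives only the one-sided bound $\inf_{y}\mathcal{G}_\mathcal{L}(x,y)\geq (f+g\circ L)(x)-\inf_X(f+g\circ L)$ from weak duality (its inequality \eqref{eq: inf gap and primal}), proves the first implication by expanding $\mathcal{G}_\mathcal{L}$, fixing $y=y^*$ a saddle point and passing back to the primal value via convexity of $g$, and proves the second implication by adding the two separate bounds $\mathcal{G}_\mathcal{L}\geq\mu\,\dist(x,S_P)$ and $\mathcal{G}_\mathcal{L}\geq\eta\,\dist(y,S_D)$ --- which is why its constant is $\xi=\min\{\mu,\eta\}/2$. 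Your identity-based decomposition buys two things: the first implication becomes immediate once you note that the dual bracket vanishes at any $y^*\in S_D$, and the second implication yields the sharper constant $\xi=\min\{\mu,\eta\}$ because the primal and dual gaps sum exactly to $\mathcal{G}_\mathcal{L}$ rather than each being separately dominated by it. The structural facts you invoke are sound: strong duality does follow from $S\neq\emptyset$ by the squeeze $\inf\sup\leq\sup_y\mathcal{L}(x^*,y)=\mathcal{L}(x^*,y^*)=\inf_x\mathcal{L}(x,y^*)\leq\sup\inf$ combined with weak duality, and $S=S_P\times S_D$ follows from the swap argument together with the observation that, under strong duality, any pair $(x^\sharp,y^*)\in S_P\times S_D$ satisfies the saddle-point inequalities; neither step uses convexity of $f$, only $g=g^{**}$. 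Note that for the first implication you actually only need the inclusion $\mathrm{proj}_X S\subseteq S_P$ (so that $\dist((x,y^*),S)\geq\dist(x,S_P)$), which is the same inequality the paper uses implicitly in its estimate \eqref{lem: proof dist S to dist SP}. The only point requiring a word of care in your write-up is the bookkeeping of $+\infty$ values in the identity, but this is harmless since both sides are $+\infty$ simultaneously.
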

\begin{proof}
	By Definition \ref{def: sharpness Lagrangian nonseparable},
	we have
	\[
	\mathcal{G}_\mathcal{L} (x,y) = f(x)+g^*(y) + \sup_{(\hat{x},\hat{y}) \in X\times Y} \langle Lx,\hat{y}\rangle-\langle L\hat{x},y \rangle - f(\hat{x}) - g^*(\hat{y}) \geq \mu \dist((x,y),S),
	\]
	where $S$ is the set of saddle points.
	The RHS of the above inequality can be written
	\begin{align}
		\dist ((x,y),S) & = \inf_{(\bar{x},\bar{y})\in S} \Vert (x,y) - (\bar{x},\bar{y}) \Vert_{X\times Y} = \inf_{(\bar{x},\bar{y})\in S} \sqrt{\Vert x- \bar{x} \Vert_{X}^2 +\Vert y-\bar{y}\Vert^2_Y} \nonumber\\
		& \geq \inf_{(\bar{x},\bar{y})\in S} \Vert x- \bar{x} \Vert_{X}  \geq \dist (x,S_P).
		\label{lem: proof dist S to dist SP}
	\end{align}
	On the other hand, observe that 
	\begin{align}
		\mathcal{G}_\mathcal{L} (x,y) & = f(x)+g^*(y) + \sup_{(\hat{x},\hat{y}) \in X\times Y} \left\{ \langle Lx,\hat{y}\rangle-\langle L\hat{x},y \rangle - f(\hat{x}) - g^*(\hat{y}) \right\} \nonumber\\
		& =  f(x)+g^*(y) + \sup_{\hat{x} \in X} \left\{ -\langle L\hat{x},y \rangle - f(\hat{x})\right\} +\sup_{\hat{y}\in Y} \left\{ \langle Lx,\hat{y}\rangle - g^*(\hat{y})\right\} \nonumber\\
		& = f(x)+g^*(y) + g^{**}(Lx) +\sup_{\hat{x} \in X} \left\{ -\langle L\hat{x},y \rangle - f(\hat{x})\right\} \nonumber\\
		& = f(x) + g(Lx) +\sup_{\hat{x} \in X} \left\{ -\langle L\hat{x},y \rangle - f(\hat{x}) +g^*(y)\right\}.
		\label{lem: proof gap function}
	\end{align}
	Combining \eqref{lem: proof dist S to dist SP} and \eqref{lem: proof gap function}
	\begin{equation}
		\label{lem: proof LP sharp with Sp}
		f(x) + g(Lx) +\sup_{\hat{x} \in X} \left\{ -\langle L\hat{x},y \rangle - f(\hat{x}) +g^*(y)\right\} \geq \mu \dist(x,S_P).
	\end{equation}
	Since \eqref{lem: proof LP sharp with Sp} holds for any $(x,y)\in X\times Y$, we fix $y=y^*$ as a saddle point and obtain 
	\begin{align}
		(f+g\circ L)(x) -\inf_{\hat{x}\in X} \sup_{\hat{y} \in Y} \left\{ \langle L\hat{x},\hat{y} \rangle + f(\hat{x}) -g^*(\hat{y})\right\} & \geq (f+g\circ L)(x) -\inf_{\hat{x} \in X} \left\{ f(\hat{x})+ \langle L\hat{x},y^* \rangle -g^*(y^*)\right\}  \nonumber\\
		& \geq \mu \dist(x,S_P).\nonumber
	\end{align}
	Thanks to the convexity of $g$, the Lagrange primal has the same value as problem \eqref{prob: composite primal}. Hence, we obtain
	\[
	(f+g\circ L)(x) -\inf_{\hat{x}\in X} (f+g\circ L) (\hat{x}) \geq \mu \dist(x,S_P).
	\]
	
	For the second statement, let us assume that $f+g\circ L$ is sharp, then there exists $\mu \geq 0$ such that 
	\begin{equation*}
		(f+g\circ L)(x) - \inf_{\hat{x}\in X} f(\hat{x}) +g(L \hat{x}) \geq \mu\text{dist} (x,S_P),
	\end{equation*}
	for any $x_0 \in X$ and $S_P = \argmin f+g\circ L$. 
	We use \eqref{eq: inf gap and primal} and obtain
	\begin{equation}
		\label{eq: gap func sharp prima prob}
		\mathcal{G}_\mathcal{L} (x,y) \geq \inf_{y\in Y} \mathcal{G}_\mathcal{L} (x,y) \geq (f+g\circ L)(x) -\inf_{\overline{x} \in X} (f+g\circ L)(\overline{x}) \geq \mu\text{dist} (x,S_P).
	\end{equation}
	We recall that the dual problem has the form
	\[
	\sup_{y\in Y}\inf_{x\in X} \mathcal{L}(x,y) = \sup_{y\in Y} -f^*(-L^* y) - g^*(y),
	\]
	then sharpness for the objective of the dual problem as in \eqref{eq: def sharpness dual Lagrange} is 
	\begin{equation*}
		(\exists \eta >0)(\forall y\in Y) \quad \sup_{\hat{y}\in Y} -f^*(-L^* \hat{y}) - g^*(\hat{y}) + f^*(-L^* {y}) + g^*({y}) \geq \eta \dist(y,S_D).
	\end{equation*}
	With the same argument with the primal problem as above, we get the following
	\begin{equation}
		\label{eq: gap func sharp dual prob}
		\mathcal{G}_\mathcal{L} (x,y) \geq \eta \text{dist} (y,S_D).
	\end{equation}
	Summing \eqref{eq: gap func sharp prima prob} and \eqref{eq: gap func sharp dual prob},
	\begin{align}
		\label{eq: gap func sharp with primal-dual probs}
		2 \mathcal{G}_\mathcal{L} (x,y) \geq \mu \text{dist} (x,S_P) + \eta \text{dist} (y,S_D) & \geq \min\{\mu,\eta\} \left( \text{dist} (x,S_P) + \text{dist} (y,S_D)\right).
	\end{align}
	Denoting $\xi = \min\{\mu,\eta\}/2$, we finish the proof.
\end{proof}

The above proposition shows the relationship between the gap function and the primal problem. For the modified gap function $\mathcal{H}_\mathcal{L} (x,y)$, we obtain the following estimation.
\begin{lemma}
    Let us consider the composite problem \eqref{prob: composite primal} and its corresponding Lagrangian given in \eqref{eq: lagrangians}. Assume that $f+g\circ L$ is bounded from below, the set of minimizer $S_P$ of problem \eqref{prob: composite primal}, the set of saddle point $S$ of Lagrangian problems are nonempty. For any saddle point $(x^*,y^*)\in S$ and  $(x,y)\in X\times Y$, the following holds,
    \begin{equation}
    \label{eq: primal prob and Lagrangian}
        (f+g\circ L) (x) - \inf (f+g \circ L) \leq \mathcal{L}\left(x,y^{*}\right)-\mathcal{L}\left(x^{*},y\right)+\left\langle Lx-Lx^{*},\overline{y}-y^{*}\right\rangle,
    \end{equation}
where $\overline{y}\in\partial g(Lx)$. Moreover, if problem \eqref{prob: composite primal} is sharp in the sense of Definition \ref{def: sharpness primal composite} with $\mu >0$ and $g,g^*$ are  $\eta$ and $\eta^{-1}$-strongly convex, respectively, with $\eta>0$. Then, we have 
\[
\mathcal{H}_\mathcal{L} (x,y) \geq \mu \dist (x,S_P),
\]
for all $(x,y) \in X\times Y, y \in \partial g(Lx)$, where $\mathcal{H}(x,y)$ is the modified gap function \eqref{eq: def sharpness inf-Lagrangian}.
\end{lemma}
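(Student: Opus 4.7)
I would split the argument into two pieces: the identity-style inequality \eqref{eq: primal prob and Lagrangian} will fall out of Fenchel--Young relations at the saddle point, and the sharpness estimate for $\mathcal{H}_{\mathcal{L}}$ is most cleanly proved by reasoning about $\mathcal{L}(x,y^*)-\mathcal{L}(x^*,y)$ directly, without routing through \eqref{eq: primal prob and Lagrangian}. Throughout I use that $(x^*,y^*)\in S$ forces both $y^*\in\partial g(Lx^*)$ and $Lx^*\in\partial g^*(y^*)$, together with the standard fact $\inf(f+g\circ L)=f(x^*)+g(Lx^*)$.

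\emph{First inequality.} Since $y^*\in\partial g(Lx^*)$ and $\bar y\in\partial g(Lx)$, the Fenchel--Young equalities give $g(Lx^*)=\langle Lx^*,y^*\rangle-g^*(y^*)$ and $g(Lx)=\langle Lx,\bar y\rangle-g^*(\bar y)$. Substituting both into $\mathrm{LHS}:=(f+g\circ L)(x)-\inf(f+g\circ L)$ and subtracting from the RHS, after cancellation of the $f$ and $\bar y$ terms one obtains
\[
\mathrm{RHS}-\mathrm{LHS}=\bigl[g^*(\bar y)-g^*(y^*)+\langle Lx^*,y^*-\bar y\rangle\bigr]+\bigl[g^*(y)-g^*(y^*)+\langle Lx^*,y^*-y\rangle\bigr].
\]
Both brackets are non-negative by the convex subgradient inequality for $g^*$ at $y^*$ with subgradient $Lx^*$, which establishes \eqref{eq: primal prob and Lagrangian}.

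\emph{Sharpness of $\mathcal{H}_{\mathcal{L}}$.} Fix a saddle point $(x^*,y^*)$ and $y\in\partial g(Lx)$. Expanding
\[
\mathcal{L}(x,y^*)-\mathcal{L}(x^*,y)=f(x)-f(x^*)+\langle Lx-Lx^*,y^*\rangle+\langle Lx^*,y^*-y\rangle+g^*(y)-g^*(y^*),
\]
and applying $\eta^{-1}$-strong convexity of $g^*$ at $y^*$ with subgradient $Lx^*$ produces
\[
\mathcal{L}(x,y^*)-\mathcal{L}(x^*,y)\ge f(x)-f(x^*)+\langle Lx-Lx^*,y^*\rangle+\frac{1}{2\eta}\|y-y^*\|^{2}.
\]
Now $\eta^{-1}$-strong convexity of $g^*$ is equivalent to $\eta$-smoothness of $g$, so the descent lemma at $Lx^*$ (with $\nabla g(Lx^*)=y^*$) yields $g(Lx)-g(Lx^*)\le \langle Lx-Lx^*,y^*\rangle+\frac{\eta}{2}\|Lx-Lx^*\|^{2}$; and $\eta$-strong convexity of $g$ makes $\partial g$ $\eta$-strongly monotone, hence $\|y-y^*\|\ge\eta\|Lx-Lx^*\|$ and $\frac{1}{2\eta}\|y-y^*\|^{2}\ge\frac{\eta}{2}\|Lx-Lx^*\|^{2}$. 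Chaining these three bounds gives $\mathcal{L}(x,y^*)-\mathcal{L}(x^*,y)\ge f(x)+g(Lx)-f(x^*)-g(Lx^*)$, and primal sharpness together with passing to the infimum over $(x^*,y^*)\in S$ yields $\mathcal{H}_{\mathcal{L}}(x,y)\ge\mu\,\dist(x,S_P)$.

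\emph{Main obstacle.} The tempting direct approach --- plugging $\bar y=y$ into \eqref{eq: primal prob and Lagrangian} --- leaves a residual $-\langle Lx-Lx^*,y-y^*\rangle$ that monotonicity of $\partial g$ makes non-positive, so the resulting lower bound on $\mathcal{L}(x,y^*)-\mathcal{L}(x^*,y)$ is strictly weaker than $(f+g\circ L)(x)-\inf(f+g\circ L)$ and primal sharpness cannot be invoked. The resolution is the quadratic accounting above: the term $\frac{1}{2\eta}\|y-y^*\|^{2}$ harvested from $g^*$ strong convexity balances exactly the $\frac{\eta}{2}\|Lx-Lx^*\|^{2}$ needed to absorb $g(Lx)-g(Lx^*)$ via $g$-smoothness, a cancellation only available because the same constant $\eta$ governs both hypotheses.
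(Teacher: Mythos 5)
Your proof is correct, and both halves check out. For inequality \eqref{eq: primal prob and Lagrangian} you and the paper use the same underlying facts in different packaging: the paper writes $(f+g\circ L)(x)-\inf(f+g\circ L)$ as $\mathcal{L}(x,y^*)-\mathcal{L}(x^*,y)$ plus a bracket of $g$- and $g^*$-terms and then applies the subgradient inequalities for $g$ at $Lx$ and for $g^*$ at $y^*$, whereas you substitute the Fenchel--Young equalities $g(Lx)=\langle Lx,\bar y\rangle-g^*(\bar y)$ and $g(Lx^*)=\langle Lx^*,y^*\rangle-g^*(y^*)$ and reduce everything to two Bregman-type nonnegative brackets for $g^*$ at $y^*$; these are equivalent computations. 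For the sharpness of $\mathcal{H}_{\mathcal{L}}$ your route is genuinely different and, in my view, cleaner. The paper keeps the cross term $\langle Lx-Lx^*,\bar y-y^*\rangle$, strengthens both subgradient inequalities to their strongly convex versions to harvest $-\tfrac{1}{2\eta}\Vert y-y^*\Vert^2-\tfrac{\eta}{2}\Vert Lx-Lx^*\Vert^2$, and then splits the cross term by Young's inequality with a parameter $\alpha$; as written (with ``$\eta>\alpha>0$'') the residual $(\tfrac{1}{2\alpha}-\tfrac{1}{2\eta})\Vert \bar y-y^*\Vert^2$ after setting $y=\bar y$ is nonnegative and points the wrong way, so the argument only closes at the degenerate choice $\alpha=\eta$, where everything cancels exactly. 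You avoid this entirely: strong convexity of $g^*$ supplies $+\tfrac{1}{2\eta}\Vert y-y^*\Vert^2$, the descent lemma for the $\eta$-smooth $g$ (dual to $\eta^{-1}$-strong convexity of $g^*$) controls $\langle Lx-Lx^*,y^*\rangle$ from below by $g(Lx)-g(Lx^*)-\tfrac{\eta}{2}\Vert Lx-Lx^*\Vert^2$, and $\eta$-strong monotonicity of $\partial g$ converts $\Vert y-y^*\Vert$ into $\eta\Vert Lx-Lx^*\Vert$ so the two quadratics cancel with no free parameter. Your closing remark correctly identifies why the naive substitution $\bar y=y$ in \eqref{eq: primal prob and Lagrangian} fails and why the matching constant $\eta$ in both hypotheses is exactly what makes the cancellation work. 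The only point worth making explicit is that $(x^*,y^*)\in S$ implies $x^*\in S_P$ with $\inf(f+g\circ L)=f(x^*)+g(Lx^*)$ (via $\sup_y\mathcal{L}(x^*,y)=f(x^*)+g^{**}(Lx^*)$ and convexity of $g$), which you invoke as a ``standard fact''; it is indeed standard here but is the step that uses convexity of $g$, and the paper phrases it in the opposite direction (starting from $x^*\in S_P$ and asserting a corresponding dual $y^*$), so your version is actually the one that matches the lemma's hypotheses.
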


\begin{proof}
Let $x^{*}\in S_{P}$ then for any $y\in Y$, we have 
\begin{align*}
-\inf_{\overline{x} \in X} (f+g\circ L)(\overline{x}) = -\left(f+g\circ L\right)\left(x^{*}\right) & =-\mathcal{L}(x^*,y) -g(Lx^*)-g^*(y) + \langle Lx^*,y\rangle.
\end{align*}
On the other hand, for any $x\in X$, there exists $y^*$ corresposding to $x^*$ such that $(x^*,y^*)\in S$ is a saddle point
\begin{align*}
\left(f+g\circ L\right)\left(x\right) & =\mathcal{L}\left(x,y^{*}\right)+g\left(Lx\right)+g^{*}\left(y^{*}\right)-\left\langle Lx,y^{*}\right\rangle.
\end{align*}
Summing the two and taking $\overline{y}\in \partial g(Lx)$, we obtain
\begin{align}
    & \left(f+g\circ L\right)\left(x\right) - \inf \left(f+g\circ L\right) \nonumber\\
    & =\mathcal{L}(x,y^*) - \mathcal{L}(x^*,y) +\left[ -g(Lx^*)-g^*(y) + \langle Lx^*,y\rangle +g\left(Lx\right)+g^{*}\left(y^{*}\right)-\left\langle Lx,y^{*}\right\rangle\right]  \nonumber\\
    & \leq \mathcal{L}(x,y^*) - \mathcal{L}(x^*,y) + \left[ \langle \overline{
    y}, L(x-x^*)\rangle + \langle Lx^*,y^*-y\rangle
    +\langle Lx^*,y\rangle -\left\langle Lx,y^{*}\right\rangle
    \right] \label{eq: cv subdiff for g}\\
    & = \mathcal{L}(x,y^*) - \mathcal{L}(x^*,y) +\langle L(x-x^*), \overline{y}-y^*\rangle. \nonumber
\end{align}
This holds for any $(x,y)\in X\times Y$ and $\overline{y}\in\partial g(Lx)$. Hence, we have \eqref{eq: primal prob and Lagrangian}.

Let us assume that the primal problem \eqref{prob: composite primal} is sharp with $\mu >0$  \eqref{eq: def sharpness primal composite}, we have
\[
\mu\text{dist }\left(x,S_{P}\right)\leq\left(f+g\circ L\right)\left(x\right)-\left(f+g\circ L\right)\left(x^{*}\right)\leq\mathcal{L}\left(x,y^{*}\right)-\mathcal{L}\left(x^{*},y\right)+\left\langle Lx-Lx^{*},\overline{y}-y^{*}\right\rangle ,
\]
where $\left(x^{*},y^{*}\right)\in S$ is the saddle point. Then taking
the infimum w.r.t $\left(x^{*},y^{*}\right)\in S$, we obtain 
\[
\mu\text{dist }\left(x,S_{P}\right)+\inf_{\left(x^{*},y^{*}\right)\in S}-\left\langle Lx-Lx^{*},\overline{y}-y^{*}\right\rangle \leq\inf_{\left(x^{*},y^{*}\right)\in S}\mathcal{L}\left(x,y^{*}\right)-\mathcal{L}\left(x^{*},y\right)=\mathcal{H}\left(x,y\right)
\]
for any $\left(x,y\right)\in X\times Y$.

On the other hand, when both $g^*$ and $g$ are $\eta^{-1}$ and $\eta$-strongly convex, respectively, \cite[Proposition 14.2]{Bau2011}), from \eqref{eq: cv subdiff for g}, we can have the following form
\begin{align*}
\mu\text{dist }\left(x,S_{P}\right) & \leq\mathcal{L}\left(x,y^{*}\right)-\mathcal{L}\left(x^{*},y\right)+\left\langle Lx-Lx^{*},\overline{y}-y^{*}\right\rangle -\frac{1}{2\eta}\Vert y-y^*\Vert^2 -\frac{\eta}{2}\Vert Lx-Lx^*\Vert^2\\
& = \mathcal{L}\left(x,y^{*}\right)-\mathcal{L}\left(x^{*},y\right) -\frac{1}{2\eta}\Vert y-y^*\Vert^2 -\frac{\eta}{2}\Vert Lx-Lx^*\Vert^2 \\
& +\frac{1}{2}\left( \alpha \Vert Lx- L x^* \Vert^2 + \frac{1}{\alpha} \Vert \overline{y}-y^*\Vert^2 -\Vert \sqrt{\alpha} {Lx- L x^*} -  \frac{\overline{y}-y^*}{\sqrt{\alpha}}\Vert^2 \right) \\
& \leq \mathcal{L}\left(x,y^{*}\right)-\mathcal{L}\left(x^{*},y\right) +\frac{1}{2\alpha} \Vert \overline{y}-y^*\Vert^2 - \frac{1}{2\eta} \Vert y-y^*\Vert^2+ \frac{\alpha-\eta}{2}\Vert Lx - Lx^*\Vert^2 \\
& \leq \mathcal{L}\left(x,y^{*}\right)-\mathcal{L}\left(x^{*},y\right) +\frac{1}{2\alpha} \Vert \overline{y}-y^*\Vert^2 - \frac{1}{2\eta} \Vert y-y^*\Vert^2.
\end{align*}
where $\eta >\alpha >0$. Letting $y=\overline{y}$ and taking the infimum with respect to $(x^*,y^*) \in S$, and we finish the proof.
\end{proof}

Before using primal sharpness to discuss convergence of primal-dual algorithm, we need to transform the Lagrangian into primal objective. For $(x,y),(x_0,y_0) \in X\times Y$, we consider
\begin{align}
	\mathcal{L}(x,y_0) -\mathcal{L}(x_0,y) &= f(x)-f(x_0) +g^*(y)-g^*(y_0) +\langle Lx,y_0\rangle - \langle Lx_0,y\rangle \nonumber\\
	& \geq f(x)-f(x_0)-g^*(y_0) +\langle Lx,y_0\rangle -g^{**}(Lx_0) \nonumber\\
	& = (f+g\circ L)(x) - (f+g\circ L)(x_0) -g^*(y_0) +\langle Lx,y_0\rangle - g(Lx),
	\label{eq: Lagrange to primal obj}
\end{align}
where we utilize the convexity of $g$ in the last equality. Let us fix $(x,y)$ and take $y_0 \in \partial g(Lx)$, then for any $x_0\in X$, \eqref{eq: Lagrange to primal obj} becomes
\begin{equation}
	\mathcal{L}(x,y_0) -\mathcal{L}(x_0,y) \geq (f+g\circ L)(x) - (f+g\circ L)(x_0).
	\label{eq: Lagrange to primal obj-1}
\end{equation}
Inequality \eqref{eq: Lagrange to primal obj-1} indicates that the choice of $y_0$ depends on $x$. For primal-dual algorithms, one often arrive at the term $\mathcal{L}(x_{n+1},y_0) -\mathcal{L}(x_0,y_{n+1})$ for any $(x_0,y_0)\in X\times Y$, and so we need to know $x_{n+1}$ before choosing $y_0$. In this case, Algorithm \ref{alg: prima-dual no with primal update 1st} is more suitable for us as we perform the primal update before the dual. Therefore, we focus on Algorithm \ref{alg: prima-dual no with primal update 1st} in this section. This will be made clearer in the following result. 

\begin{proposition}
	Let $X$ and $Y$ be Hilbert, $f:X\to (-\infty,+\infty]$ be a proper lsc $\rho$-weakly convex function, $g:Y\to (-\infty,+\infty]$ be proper lsc convex and $L:X\to Y$ be a bounded linear operator. Let $(x_n)_{n\in\N},(y_n)_{n\in\N}$ be the sequences generated by Algorithm \ref{alg: prima-dual no with primal update 1st}. Let us assume that the stepsizes $\tau,\sigma$ satisfy the inequality $\sigma\rho+\sqrt{\sigma\tau}\Vert L\Vert <1$, and primal objective is sharp as in  Definition \ref{def: sharpness primal composite} with respect to the set of $S_P = \arg\min (f+g\circ L) \neq \emptyset$. Then we have the following,
	\begin{align}
		\frac{1}{2\sigma} \dist^2 (x_n,S_P) +\frac{1}{2\tau} \dist^2 (y_n, \partial g(Lx_{n+1})) & \geq \frac{1-\sigma\rho -  \sqrt{\sigma\tau} \Vert L\Vert}{2\sigma} \dist^2 (x_{n+1},S_P) +\mu \dist (x_{n+1},S_P) .
		\label{eq: f+gL sharp dist estimate}
	\end{align}
\end{proposition}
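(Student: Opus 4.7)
The plan is to combine Proposition \ref{prop: pd-xy Lagrange estimation} with the bridge inequality \eqref{eq: Lagrange to primal obj-1}, so that the Lagrangian difference produced by the Proposition is replaced by a primal-objective gap to which Definition \ref{def: sharpness primal composite} can be applied.

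Concretely, fix any $x^{\star}\in S_P$ (so that $(f+g\circ L)(x^{\star})=\inf(f+g\circ L)$) and any $\tilde y\in\partial g(Lx_{n+1})$. Instantiating Proposition \ref{prop: pd-xy Lagrange estimation} with $x=x^{\star}$ and $y=\tilde y$ provides a lower bound for $\mathcal{L}(x^{\star},y_{n+1})-\mathcal{L}(x_{n+1},\tilde y)$ by the full right-hand side $R$ of \eqref{eq: pd-xy norm estimation xn xn+1}. Separately, applying \eqref{eq: Lagrange to primal obj-1} with the identifications $x\leftrightarrow x_{n+1}$, $y_0\leftrightarrow\tilde y$, $x_0\leftrightarrow x^{\star}$, $y\leftrightarrow y_{n+1}$, which is legitimate precisely because $\tilde y\in\partial g(Lx_{n+1})$, combined with Definition \ref{def: sharpness primal composite}, yields
\begin{equation*}
\mathcal{L}(x_{n+1},\tilde y)-\mathcal{L}(x^{\star},y_{n+1})\ge (f+g\circ L)(x_{n+1})-(f+g\circ L)(x^{\star})\ge \mu\,\dist(x_{n+1},S_P).
\end{equation*}
Flipping signs forces $\mathcal{L}(x^{\star},y_{n+1})-\mathcal{L}(x_{n+1},\tilde y)\le -\mu\,\dist(x_{n+1},S_P)$, and sandwiching with the Proposition's bound gives $R\le -\mu\,\dist(x_{n+1},S_P)$.

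Rearranging this inequality and dropping the three nonnegative terms in $R$ other than the one involving $\Vert x^{\star}-x_{n+1}\Vert^{2}$ (each of those three has coefficient $\ge 0$ under the stepsize condition $\sigma\rho+\sqrt{\sigma\tau}\Vert L\Vert<1$ and $\theta\in[0,1]$) leaves
\begin{equation*}
\tfrac{1}{2\sigma}\Vert x^{\star}-x_n\Vert^2+\tfrac{1}{2\tau}\Vert \tilde y-y_n\Vert^2 \ge \tfrac{1-\sigma\rho-\sqrt{\sigma\tau}\Vert L\Vert}{2\sigma}\Vert x^{\star}-x_{n+1}\Vert^2+\mu\,\dist(x_{n+1},S_P).
\end{equation*}
Using $\Vert x^{\star}-x_{n+1}\Vert^{2}\ge\dist^{2}(x_{n+1},S_P)$ renders the right-hand side independent of $x^{\star}$ and $\tilde y$; taking the infimum on the left-hand side over $x^{\star}\in S_P$ and then over $\tilde y\in\partial g(Lx_{n+1})$, which splits across the two separable summands, produces exactly \eqref{eq: f+gL sharp dist estimate}.

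The main subtlety is direction-matching: Proposition \ref{prop: pd-xy Lagrange estimation} supplies a lower bound for $\mathcal{L}(\cdot,y_{n+1})-\mathcal{L}(x_{n+1},\cdot)$, while \eqref{eq: Lagrange to primal obj-1} requires its auxiliary dual variable to belong to $\partial g$ evaluated at the \emph{first} primal slot. The matching works here only because Algorithm \ref{alg: prima-dual no with primal update 1st} produces $x_{n+1}$ before $y_{n+1}$, allowing $\tilde y$ to be chosen in $\partial g(Lx_{n+1})$ with $x_{n+1}$ playing the role of the first primal argument in the bridge; this explains both why the statement is formulated for Algorithm \ref{alg: prima-dual no with primal update 1st} rather than Algorithm \ref{alg: prima-dual no xn-1}, and why the quantity $\dist(y_n,\partial g(Lx_{n+1}))$ (rather than $\dist(y_n,\partial g(Lx_n))$) appears naturally in the estimate.
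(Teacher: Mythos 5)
Your proposal is correct and follows essentially the same route as the paper: instantiate Proposition \ref{prop: pd-xy Lagrange estimation} at $(x^{\star},\tilde y)$ with $x^{\star}\in S_P$ and $\tilde y\in\partial g(Lx_{n+1})$, bound the Lagrangian difference from above by $-\mu\,\dist(x_{n+1},S_P)$ via \eqref{eq: Lagrange to primal obj-1} and primal sharpness, then discard the nonnegative residual terms and pass to distances. Your ordering of the final step (first making the right-hand side independent of $x^{\star}$ and $\tilde y$, then taking infima on the left) is in fact a slightly cleaner justification of the paper's ``can be taken arbitrarily'' step.
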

\begin{proof}
	Let us recall \eqref{eq: pd-xy norm estimation xn xn+1} in Proposition \ref{prop: pd-xy Lagrange estimation}, for any $(x,y)\in X\times Y$,
	\begin{align}
		& \mathcal{L}\left(x,y_{n+1}\right)-\mathcal{L}\left(x_{n+1},y\right)\nonumber\\
		\geq  & \frac{1- \theta\sqrt{\sigma\tau}\Vert L \Vert}{2\tau} \left\Vert y-y_{n+1}\right\Vert ^{2}-\frac{1}{2\tau}\left\Vert y-y_{n}\right\Vert ^{2} + \frac{1-  \sqrt{\sigma\tau}\Vert L \Vert}{2\tau}\left\Vert y_{n}-y_{n+1}\right\Vert ^{2}\nonumber\\
		&+\frac{1-\sigma\rho - \sqrt{\sigma\tau}\Vert L \Vert}{2\sigma} \left\Vert x-x_{n+1}\right\Vert ^{2}- \frac{1}{2\sigma}\left\Vert x-x_{n}\right\Vert ^{2}+ \frac{1-\theta \sqrt{\sigma\tau}\Vert L \Vert}{2\sigma}\left\Vert x_{n}-x_{n+1}\right\Vert ^{2}.
		\label{eq: pd-xy estimation cont-1}
	\end{align}
	
	We utilise \eqref{eq: Lagrange to primal obj-1} to obtain
	\begin{equation}
		\mathcal{L}(x_{n+1},y_0) -\mathcal{L}(x,y_{n+1}) \geq (f+g\circ L)(x_{n+1}) - (f+g\circ L)(x),
		\label{eq: primal sharpness Lagrangian}
	\end{equation}
	for any $x\in X$ and $y=y^*_{n+1} \in \partial g(Lx_{n+1})$. Combining \eqref{eq: primal sharpness Lagrangian}, sharpness condition \eqref{eq: def sharpness primal composite} and take $x^*\in S_P$, \eqref{eq: pd-xy estimation cont-1} becomes
	\begin{align*}
		-\mu\text{dist} \left(x_{n+1},S_P\right) &\geq   \frac{1- \theta\sqrt{\sigma\tau}\Vert L \Vert}{2\tau} \left\Vert y^*_{n+1}-y_{n+1}\right\Vert ^{2}-\frac{1}{2\tau}\left\Vert y^*_{n+1} -y_{n}\right\Vert ^{2} + \frac{1- \sqrt{\sigma\tau}\Vert L \Vert}{2\tau}\left\Vert y_{n}-y_{n+1}\right\Vert ^{2}\\
		&+\frac{1-\sigma\rho - \sqrt{\sigma\tau}\Vert L \Vert}{2\sigma} \left\Vert x^* -x_{n+1}\right\Vert ^{2}- \frac{1}{2\sigma}\left\Vert x^* -x_{n}\right\Vert ^{2}+ \frac{1-\theta\sqrt{\sigma\tau}\Vert L \Vert}{2\sigma}\left\Vert x_{n}-x_{n+1}\right\Vert ^{2}.
	\end{align*}
	As $y^*_{n+1} \in\partial g(Lx_{n+1})$ and $x^* \in S_P$ can be taken arbitrarily, we obtain
	\begin{align*}
		-\mu\text{dist} \left(x_{n+1},S_P\right) &\geq   \frac{1- \theta\sqrt{\sigma\tau}\Vert L \Vert}{2\tau} \dist^2 (y_{n+1}, \partial g(Lx_{n+1})) -\frac{1}{2\tau} \dist^2 (y_n, \partial g(Lx_{n+1}) ) \\
		&+\frac{1-\sigma\rho - \sqrt{\sigma\tau}\Vert L \Vert}{2\sigma} \dist^2 (x_{n+1},S_P) - \frac{1}{2\sigma} \dist^2 (x_n,S_P)\\
        & + \frac{1- \sqrt{\sigma\tau}\Vert L \Vert}{2\tau}\left\Vert y_{n}-y_{n+1}\right\Vert ^{2} + \frac{1-\theta\sqrt{\sigma\tau}\Vert L \Vert}{2\sigma}\left\Vert x_{n}-x_{n+1}\right\Vert ^{2}.
	\end{align*}
	By assumption on the stepsizes $\tau,\sigma$, we obtain \eqref{eq: f+gL sharp dist estimate}.
\end{proof}

This shows that the convergence of $(\text{dist} \left(x_{n},S_P\right) )_{n\in\N}$ depends on the distance of the dual sequence $(y_n)_{n\in\N}$ to $\partial g(Lx_{n+1})$. This occurs since we only make assumption on the primal objective, and do not have additional information on the dual variable $y\in Y$ like in Definition \ref{def: sharpness inf-Lagrangian}. 
Hence, one can think of the term $(\text{dist} \left(y_{n},\partial g(Lx_{n+1}\right) )_{n\in\N}$ as the inexactness produce by the algorithm, despite the fact that proximal calculation is exact. For simplicity, we denote $\varepsilon_n := \dist (y_n,\partial g(Lx_{n+1})) \geq 0$ for all $n\in \N$. This quantity can be checked after the primal update and with $\varepsilon_n$ small enough, we can obtain convergence, as we will see below.

\begin{lemma}
	\label{lem: inside tube with eps_n}
	Let $X$ and $Y$ be Hilbert, $f:X\to (-\infty,+\infty]$ be a proper lsc $\rho$-weakly convex function, $g:Y\to (-\infty,+\infty]$ be proper lsc convex and $L:X\to Y$ be a bounded linear operator. Let $(x_n)_{n\in\N},(y_n)_{n\in\N}$ be the sequences generated by Algorithm \ref{alg: prima-dual no with primal update 1st}. Let us assume that the stepsizes $\tau,\sigma$ satisfy the inequality $\sigma\rho+\sqrt{\sigma\tau}\Vert L\Vert <1$, where $\theta\in[0,1]$, and primal objective is sharp as in  Definition \ref{def: sharpness primal composite} with respect to the set of $S_P = \arg\min (f+g\circ L) \neq \emptyset$. If $\sigma,\tau$ satisfy the following
	\[
	(\forall n\in \N) \quad \mu^2 \sigma\tau > (\sigma \rho + \sqrt{\sigma \tau} \Vert L\Vert) \varepsilon_n^2, \ \varepsilon_n = \dist (y_n,\partial g(Lx_{n+1})),
	\]
	and at iteration $n\in\N$,
	\begin{align}
		\label{eq: En tube}
		\text{dist}\left(x_{n},S_P\right)\leq
		E^+_n \quad & \Longrightarrow \quad \dist(x_{n+1},S_P) \leq E^+_n, \\
		\text{dist}\left(x_{n},S_P\right)\leq
		E^-_n \quad & \Longrightarrow \quad \dist(x_{n+1},S_P) \leq E^-_n,
		\label{eq: En minus tube}
	\end{align}
	where 
	\[
	E_n^+ := \frac{\sigma\mu +\sigma \sqrt{\mu^2 - \frac{\varepsilon_n^2 (\sigma\rho +\sqrt{\sigma\tau} \Vert L\Vert)}{\sigma\tau}}}{\sigma\rho +\sqrt{\sigma\tau} \Vert L\Vert}, \quad 
	E_n^- := \frac{\sigma\mu -\sigma \sqrt{\mu^2 - \frac{\varepsilon_n^2 (\sigma\rho +\sqrt{\sigma\tau} \Vert L\Vert)}{\sigma\tau}}}{\sigma\rho +\sqrt{\sigma\tau} \Vert L\Vert}.
	\]
	Moreover, if 
	\begin{equation}
		\label{eq: En < dist <En}
		E_n^- \leq \dist (x_{n+1},S_P) \leq E_n^+,
	\end{equation}
	Then $\dist(x_{n+1},S_P) \leq \dist(x_n,S_P)$.
\end{lemma}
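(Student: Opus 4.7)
Denote $d_n := \dist(x_n, S_P)$, $A := \frac{1-\sigma\rho - \sqrt{\sigma\tau}\Vert L\Vert}{2\sigma}$, and the auxiliary constant $C := \frac{1}{2\sigma} - A = \frac{\sigma\rho + \sqrt{\sigma\tau}\Vert L\Vert}{2\sigma} > 0$. Under the standing stepsize constraint, $A>0$. The estimate \eqref{eq: f+gL sharp dist estimate} then rewrites compactly as
\[
A\, d_{n+1}^2 + \mu\, d_{n+1} \;\leq\; \frac{1}{2\sigma}\, d_n^2 + \frac{1}{2\tau}\,\varepsilon_n^2.
\]
The first step is to identify $E_n^+$ and $E_n^-$ as the two roots of the quadratic equation
\[
C\, t^2 - \mu\, t + \frac{1}{2\tau}\,\varepsilon_n^2 \;=\; 0,
\]
obtained by formally equating $d_n = d_{n+1} = t$ and rearranging (equivalently, by moving $\frac{1}{2\sigma}t^2$ across using $\frac{1}{2\sigma}-A=C$). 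The hypothesis $\mu^2\sigma\tau > (\sigma\rho+\sqrt{\sigma\tau}\Vert L\Vert)\varepsilon_n^2$ makes the discriminant strictly positive; direct application of the quadratic formula, followed by simplification, recovers exactly the expressions for $E_n^\pm$ in the statement, and Vieta's formulas show both are positive.

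For the two forward-invariance statements \eqref{eq: En tube} and \eqref{eq: En minus tube}, I would exploit the strict monotonicity of the map $t \mapsto A\,t^2 + \mu\,t$ on $[0,\infty)$ (both coefficients positive). Under $d_n \leq E_n^+$, substituting into the main estimate yields
\[
A\, d_{n+1}^2 + \mu\, d_{n+1} \;\leq\; \frac{1}{2\sigma}(E_n^+)^2 + \frac{1}{2\tau}\varepsilon_n^2 \;=\; A\,(E_n^+)^2 + \mu\, E_n^+,
\]
where the final equality is just the root identity $C(E_n^+)^2 - \mu E_n^+ + \frac{\varepsilon_n^2}{2\tau} = 0$ rewritten via $\frac{1}{2\sigma} = A + C$. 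Monotonicity then forces $d_{n+1} \leq E_n^+$. The argument for $E_n^-$ is word-for-word identical.

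For the contraction claim \eqref{eq: En < dist <En}, I would subtract $\frac{1}{2\sigma}d_{n+1}^2$ from both sides of the main estimate to obtain
\[
\mu\, d_{n+1} - C\, d_{n+1}^2 - \frac{1}{2\tau}\varepsilon_n^2 \;\leq\; \frac{1}{2\sigma}\bigl(d_n^2 - d_{n+1}^2\bigr).
\]
The left-hand side, regarded as a downward-opening parabola in $d_{n+1}$, has roots exactly $E_n^-$ and $E_n^+$ by the identification above, hence is non-negative on $[E_n^-, E_n^+]$. The assumption $E_n^- \leq d_{n+1} \leq E_n^+$ therefore forces $d_n^2 - d_{n+1}^2 \geq 0$, i.e.\ $d_{n+1} \leq d_n$ since both quantities are non-negative.

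The whole argument reduces to one-variable quadratic algebra driven by the identity $\frac{1}{2\sigma} = A + C$; the only place requiring care is verifying that the closed-form $E_n^\pm$ given in the statement indeed solves $Ct^2 - \mu t + \frac{\varepsilon_n^2}{2\tau}=0$, but that is a direct computation with the quadratic formula. No deeper analytical ingredient beyond \eqref{eq: f+gL sharp dist estimate} is needed.
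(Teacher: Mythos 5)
Your argument is correct and follows essentially the same route as the paper: both reduce everything to the single estimate \eqref{eq: f+gL sharp dist estimate}, identify $E_n^{\pm}$ as the roots of $\frac{\sigma\rho+\sqrt{\sigma\tau}\Vert L\Vert}{2\sigma}t^2-\mu t+\frac{\varepsilon_n^2}{2\tau}=0$, and obtain the invariance and contraction claims by elementary quadratic comparisons. Your explicit monotonicity argument for $t\mapsto At^2+\mu t$ merely spells out the step the paper abbreviates as ``by solving the quadratic inequality,'' so there is no substantive difference.
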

\begin{proof}
	From \eqref{eq: f+gL sharp dist estimate}, we have
	\begin{equation}
		\label{eq: f+gL sharp dist estimate eps}
		\frac{1}{2\sigma} \dist^2 (x_n,S_P) \geq \frac{1-\sigma\rho - \sqrt{\sigma\tau} \Vert L\Vert}{2\sigma} \dist^2 (x_{n+1},S_P) +\mu \dist (x_{n+1},S_P) - \frac{\varepsilon_n^2}{2\tau}.
	\end{equation}
	We would like the RHS of \eqref{eq: f+gL sharp dist estimate eps} satisfies
	\begin{equation*}
		\frac{1-\sigma\rho -  \sqrt{\sigma\tau} \Vert L\Vert}{2\sigma} \dist^2 (x_{n+1},S_P) +\mu \dist (x_{n+1},S_P) - \frac{\varepsilon_n^2}{2\tau}
		\geq \frac{1}{2\sigma} \dist^2 (x_{n+1},S_P),
	\end{equation*}
	which is equivalent to solving the following quadratic inequality
	\[
	\frac{\sigma\rho + \sqrt{\sigma\tau} \Vert L\Vert}{2\sigma} z^2 -\mu z + \frac{\varepsilon_n^2}{2\tau} \leq 0.
	\]
	Thanks to the assumption $ \mu^2 \sigma\tau > (\sigma \rho + \sqrt{\sigma \tau} \Vert L\Vert) \varepsilon_n^2$, we obtain the solution
	\[
	\frac{\sigma\mu -\sigma \sqrt{\mu^2 - \frac{\varepsilon_n^2 (\sigma\rho +\sqrt{\sigma\tau} \Vert L\Vert)}{\sigma\tau}}}{\sigma\rho +\sqrt{\sigma\tau} \Vert L\Vert} 
	\leq \dist (x_{n+1},S_P) \leq 
	\frac{\sigma\mu +\sigma \sqrt{\mu^2 - \frac{\varepsilon_n^2 (\sigma\rho +\sqrt{\sigma\tau} \Vert L\Vert)}{\sigma\tau}}}{\sigma\rho +\sqrt{\sigma\tau} \Vert L\Vert},
	\]
	which is \eqref{eq: En < dist <En}. If \eqref{eq: En < dist <En} holds, we obtain $\dist(x_{n+1},S_P) \leq \dist (x_n,S_P)$.
	
	Let us prove \eqref{eq: En tube}, assume that for $n\in \N$, we have $	\dist (x_n,S_P) \leq E_n^+$.	By \eqref{eq: f+gL sharp dist estimate eps}, we have
	\begin{equation}
		\label{eq: bound on dxn+1}
		\frac{1}{2\sigma} (E_n^+)^2 \geq \frac{1-\sigma\rho -  \sqrt{\sigma\tau} \Vert L\Vert}{2\sigma} \dist^2 (x_{n+1},S_P) +\mu \dist (x_{n+1},S_P) - \frac{\varepsilon_n^2}{2\tau}.
	\end{equation}
	By solving \eqref{eq: bound on dxn+1}, we obtain,
	$
	0\leq d(x_{n+1},S_P) \leq E_n^+.
	$
	The proof for \eqref{eq: En minus tube} can be done in the same way.
\end{proof}

Lemma \ref{lem: inside tube with eps_n} describes the behavior between $\dist(x_{n+1},S_P)$ and $E_n^+$. Condition \eqref{eq: En < dist <En} is quite restrictive as we need to bound $\dist(x_{n+1},S_P)$ on both sides. In the next result ,we give a relaxed estimation of $(\dist(x_{n},S_P))_{n\in\N}$.

\begin{theorem}
	\label{thm: convergence primal sharp}
	Let $X$ and $Y$ be Hilbert, $f:X\to (-\infty,+\infty]$ be a proper lsc $\rho$-weakly convex function, $g:Y\to (-\infty,+\infty]$ be proper lsc convex and $L:X\to Y$ be a bounded linear operator. Let $(x_n)_{n\in\N},(y_n)_{n\in\N}$ be the sequences generated by Algorithm \ref{alg: prima-dual no with primal update 1st}. Let us assume that the stepsizes $\tau,\sigma$ satisfy the inequality $\sigma\rho+\sqrt{\sigma\tau}\Vert L\Vert <1$, and primal objective is sharp as in  Definition \ref{def: sharpness primal composite} with respect to the set of $S_P = \arg\min (f+g\circ L) \neq \emptyset$. If $\sigma,\tau$ satisfy the following
	\[
	(\forall n\in \N) \quad \mu^2 \sigma\tau > (\sigma \rho + \sqrt{\sigma \tau} \Vert L\Vert) \varepsilon_n^2,
	\]
	and at iteration $n\in\N$,
$	
 \text{dist}\left(x_{n},S_P\right)\leq
		E^+_n.
$
	Then there exists $\xi_n \geq 1$ such that
	\begin{equation}
		\label{eq: estimation dist - En}
		\xi_{n+1} \left[ \dist^2 (x_{n+1},S_P) - (E_n^-)^2\right] \leq \dist^2 (x_n,S_P) - (E_n^-)^2.
	\end{equation}
	On the other hand, if $E_0^- \leq \dist (x_0, S_P) \leq E_0^+$ and $\varepsilon_0 >0, \sum \varepsilon_n^2 <+\infty$ then $\dist (x_{n},S_P) \to 0$ as $n\to \infty$.
\end{theorem}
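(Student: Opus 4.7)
The starting point is inequality~\eqref{eq: f+gL sharp dist estimate eps} that the preceding lemma already establishes. Writing $d_n := \dist(x_n,S_P)$ and $a := \sigma\rho + \sqrt{\sigma\tau}\Vert L\Vert$, multiplying by $2\sigma$ and rearranging yields
\begin{equation*}
d_n^2 \;\geq\; d_{n+1}^2 \;-\; \bigl[\, a\, d_{n+1}^2 \;-\; 2\sigma\mu\, d_{n+1} \;+\; \sigma\varepsilon_n^2/\tau \,\bigr].
\end{equation*}
By construction of $E_n^+$ and $E_n^-$, the bracketed quadratic in $d_{n+1}$ factorises as $a(d_{n+1}-E_n^+)(d_{n+1}-E_n^-)$, so the inequality reads
\begin{equation*}
d_n^2 \;\geq\; d_{n+1}^2 \;-\; a\,(d_{n+1}-E_n^+)(d_{n+1}-E_n^-).
\end{equation*}

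\textbf{Existence of $\xi_{n+1}$.} Subtracting $(E_n^-)^2$ from both sides and factoring $(d_{n+1}-E_n^-)$ out of the right-hand side leads to
\begin{equation*}
d_n^2 - (E_n^-)^2 \;\geq\; (d_{n+1}-E_n^-)\bigl[(1-a)\,d_{n+1} + E_n^- + a\, E_n^+\bigr].
\end{equation*}
When $d_{n+1} > E_n^-$, I divide by the positive factor $(d_{n+1}-E_n^-)$, use the identity $d_{n+1}^2 - (E_n^-)^2 = (d_{n+1}-E_n^-)(d_{n+1}+E_n^-)$, and set
\begin{equation*}
\xi_{n+1} \;:=\; \frac{(1-a)\,d_{n+1} + E_n^- + a\, E_n^+}{d_{n+1} + E_n^-}.
\end{equation*}
A direct check shows $\xi_{n+1} \geq 1$ if and only if $d_{n+1} \leq E_n^+$, which is exactly the conclusion of Lemma~\ref{lem: inside tube with eps_n} from the hypothesis $d_n \leq E_n^+$. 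In the degenerate case $d_{n+1} \leq E_n^-$, the left-hand side of~\eqref{eq: estimation dist - En} is non-positive while the right-hand side is non-negative, so any $\xi_{n+1} \geq 1$ suffices.

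\textbf{Convergence to zero.} Starting from $E_0^- \leq d_0 \leq E_0^+$, rewrite the established inequality as $d_{n+1}^2 \leq (E_n^-)^2 + (d_n^2-(E_n^-)^2)/\xi_{n+1}$. Since $\sum \varepsilon_n^2 < \infty$ forces $\varepsilon_n \to 0$, the Vieta relation $a\, E_n^+ E_n^- = \sigma\varepsilon_n^2/\tau$ together with $E_n^+$ bounded below (it tends to $2\sigma\mu/a$) gives $E_n^- \leq C\varepsilon_n^2$, hence $(E_n^-)^2$ is summable and in particular vanishes. The strategy is then to iterate the recursion and absorb the summable perturbation term so as to force $d_n^2 \to 0$.

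\textbf{Main obstacle.} Extracting an effective contraction from $\xi_{n+1} \geq 1$ is the crux: since
\begin{equation*}
\xi_{n+1} \;=\; 1 + \frac{a(E_n^+ - d_{n+1})}{d_{n+1}+E_n^-},
\end{equation*}
the rate degrades toward $1$ exactly when $d_{n+1}$ approaches $E_n^+$. A secondary bookkeeping difficulty is that Lemma~\ref{lem: inside tube with eps_n} only transfers $d_n \leq E_n^+$ into $d_{n+1} \leq E_n^+$ at the same index $n$, whereas iterating the argument naturally requires $d_{n+1} \leq E_{n+1}^+$; the assumption $\varepsilon_0 > 0$ and the summability of $\varepsilon_n^2$ must be leveraged to rule out pathological clustering of $d_n$ near the moving upper bound $E_n^+$, and this is where most of the technical work will concentrate.
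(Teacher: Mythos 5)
Your first claim, the existence of $\xi_{n+1}\geq 1$ satisfying \eqref{eq: estimation dist - En}, is proved correctly and by essentially the same route as the paper: the paper subtracts $(E_n^-)^2/2\sigma$ from both sides of \eqref{eq: f+gL sharp dist estimate eps}, uses that $E_n^\pm$ are the roots of $\tfrac{\sigma\rho+\sqrt{\sigma\tau}\Vert L\Vert}{2\sigma}z^2-\mu z+\tfrac{\varepsilon_n^2}{2\tau}$, and sets $\xi_{n+1}=\tfrac{1-\sigma\rho-\sqrt{\sigma\tau}\Vert L\Vert}{2\sigma}+\tfrac{\mu}{\dist(x_{n+1},S_P)+E_n^-}$; after multiplying by $2\sigma$ and invoking Vieta's formulas this is exactly your quotient $\bigl[(1-a)d_{n+1}+E_n^-+aE_n^+\bigr]/\bigl[d_{n+1}+E_n^-\bigr]$, and the equivalence $\xi_{n+1}\geq 1\Leftrightarrow d_{n+1}\leq E_n^+$ is also how the paper closes that step. (Your division by $d_{n+1}-E_n^-$ is not even needed, since the factorisation already gives the inequality between the two differences of squares directly.)

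The second claim is where your proposal has a genuine gap, and you flag it yourself: you never extract a usable rate from $\xi_{n+1}\geq 1$, and you leave unresolved both the degeneration of $\xi_{n+1}$ toward $1$ near $E_n^+$ and the mismatch between the fixed threshold $E_n^+$ and the moving one $E_{n+1}^+$. The missing idea is that the convergence statement should not be run through the $\xi$-recursion at all. The paper instead rearranges \eqref{eq: f+gL sharp dist estimate} into the telescoping form
\begin{equation*}
\frac{1}{2\sigma}\left(d_n^2-d_{n+1}^2\right)+\frac{\varepsilon_n^2}{2\tau}\;\geq\; d_{n+1}\left[\mu-\frac{\sigma\rho+\sqrt{\sigma\tau}\Vert L\Vert}{2\sigma}\,d_{n+1}\right],
\end{equation*}
observes that the bracket is nonnegative because the iterates remain uniformly below $E^+:=2\sigma\mu/(\sigma\rho+\sqrt{\sigma\tau}\Vert L\Vert)$ (by Lemma \ref{lem: inside tube with eps_n} one has $d_{n+1}\leq\max\{d_n,E_n^-\}$, while $E_n^-\leq E^+/2$ and $d_0\leq E_0^+<E^+$ thanks to $\varepsilon_0>0$), and then sums over $n$: the partial sums of the left-hand side are bounded by $\tfrac{1}{2\sigma}d_0^2+\tfrac{1}{2\tau}\sum_n\varepsilon_n^2<+\infty$, so the nonnegative summands tend to zero, and since $d_n$ is bounded away from $E^+$ the bracket is bounded below by a positive constant, forcing $d_n\to 0$. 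No contraction factor is ever needed --- only summability of the one-step progress terms --- which is precisely what sidesteps the obstacle you identified. To complete your write-up you should replace the ``iterate the recursion'' step by this summation argument (or else supply the quantitative lower bound on $\xi_{n+1}-1$ that your route requires, which is exactly the estimate that fails without further work).
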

\begin{proof}
	From the proof of Lemma \ref{lem: inside tube with eps_n}, we know that $E_n^+, E_n^-$ are solutions of the following equation
	\[
	\frac{\sigma \rho + \sqrt{\sigma \tau} \Vert L\Vert}{2\sigma} z^2 -\mu z + \frac{\varepsilon_n^2}{2\tau} = 0. 
	\]
	We subtract $(E_n^-)^2 / 2\sigma$ on both sides of \eqref{eq: f+gL sharp dist estimate},
	\begin{align*}
		\frac{1}{2\sigma} \left[\dist^2 (x_n,S_P) -(E_n^-)^2 \right] & \geq \frac{1-\sigma\rho -  \sqrt{\sigma\tau} \Vert L\Vert}{2\sigma} \left[\dist^2 (x_{n+1},S_P) -(E_n^-)^2\right] +\mu (\dist (x_{n+1},S_P) - E_n^-) \nonumber \\
		& = \xi_{n+1} \left[\dist^2 (x_{n+1},S_P) -(E_n^-)^2\right],    
	\end{align*}
	where 
	\[
	\xi_{n+1} := \frac{1-\sigma\rho -  \sqrt{\sigma\tau} \Vert L\Vert}{2\sigma} +\frac{\mu}{\dist (x_{n+1},S_P) + E_n^-} > 0.
	\]
	Notice that $\xi_{n+1} \geq 1/2\sigma$ if $\dist(x_{n+1},S_P) \leq E_n^+$ which is satisfied by the result in Lemma \ref{lem: inside tube with eps_n}.
	
	For the second statement, by assumption
	\[ 
	E_0^- \leq \dist (x_0,S_P) \leq E_0^+ < E^+ := \frac{2\sigma \mu}{\sigma\rho +  \sqrt{\sigma\tau} \Vert L\Vert}. 
	\]
	Hence, from \eqref{eq: f+gL sharp dist estimate}, we have
	\begin{equation}
		\label{eq: sum form of dist}
		\frac{1}{2\sigma} \left[\dist^2 (x_n,S_P) - \dist^2 (x_{n+1},S_P)\right] + \frac{\varepsilon_n^2}{2\tau} 
		\geq \dist(x_{n+1},S_P) \left[ \mu -\frac{\sigma\rho + \sqrt{\sigma\tau} \Vert L\Vert}{2\sigma} \dist (x_{n+1},S_P) \right] .
	\end{equation}
	The RHS of \eqref{eq: sum form of dist} is nonnegative thanks to assumption on $\dist(x_0,S_P)$. Indeed, from Lemma \ref{lem: inside tube with eps_n},  $\dist (x_1,S_P) \leq E_0^+$. If $E_0^- \leq\dist (x_0,S_P) \leq \dist (x_1,S_P) \leq E_0^+$  then $\dist (x_0,S_P) = \dist (x_1,S_P)$, otherwise $\dist(x_1,S_P) \leq E_0^- < E^+$. By induction,  $\dist(x_n,S_P)$ is is bounded away from $E^+$ for all $n\in\N$.
	Taking the sum of \eqref{eq: sum form of dist}, we obtain
	\[
	+\infty >\frac{1}{2\sigma} \dist^2 (x_0,S_P) + \sum_{n=0}^{\infty} \frac{\varepsilon_n^2}{2\tau} 
	\geq \sum_{n=0}^{\infty} \dist(x_{n+1},S_P) \left[ \mu -\frac{\sigma\rho +  \sqrt{\sigma\tau} \Vert L\Vert}{2\sigma} \dist (x_{n+1},S_P) \right],
	\]
	which implies $\dist(x_{n+1},S_P) \left[ \mu -\frac{\sigma\rho +  \sqrt{\sigma\tau} \Vert L\Vert}{2\sigma} \dist (x_{n+1},S_P) \right] \to 0$ as $n\to\infty$. Since $\dist(x_n,S_P)$ is bounded away from $E^+$, $\dist(x_n,S_P)$ musts tend to zero.
\end{proof}

This result shows that if $\dist(y_n,\partial g(Lx_{n+1})) \to 0$ fast enough then $\dist (x_n,S_P) \to 0$ as well.
If one know $\partial g(Lx)$ then we can choose $\varepsilon_n$ small enough so $\dist(x_n,S_P)$ converges.
The assumption on the stepsizes can be realized, for example: assume that $\max \varepsilon_n^2 =0.1, \rho =2, \mu^2 =1, \Vert L\Vert = 1$. Choosing $\tau = 0.5$ and $0.0555< \sigma < 0.304806$ will satisfied the condition in Theorem \ref{thm: convergence primal sharp}.

\section{Numerical Examples}
\label{sec: lsc-pd numerical}

\subsection{Inf-sharpness}
We demonstrate the performance of Algorithm \ref{alg: prima-dual no xn-1} with several examples which satisfies inf-sharpness condition in Definition \ref{def: sharpness inf-Lagrangian}. 
\begin{example}
	\label{ex: num Lagrange inf-sharp 3}
	We recall Lagrangian in Example \ref{ex: Lagrange inf-sharp 3}
	\[
	\mathcal{L} (x,y) = |x| +xy -|y|.
	\]
	The respective primal and dual problems for this Lagrangian are
	\begin{align*}
		\inf_{-1 \leq x \leq 1}  |x|, \quad
		\sup_{ -1 \leq y \leq 1} -|y|.
	\end{align*}
	\textbf{Setting:} We set $\tau =0.25, \sigma = 0.75, \theta =1,$ maximum number of iteration $N=2001$. We randomized the starting point in the interval $[-10,10]$. For each step, we calculate the proximal operator of the absolute function $f(x) =|x|$ which has an explicit form,
	\[
	\mathrm{prox}_{\gamma f}(x_0) =\begin{cases}
		x_0 - \gamma & x_0 \geq \gamma, \\
		x_0 + \gamma & x_0 \leq -\gamma,\\
		0 & -\gamma < x_0 < \gamma.
	\end{cases}
	\]
	We plot the distance at each iteration to the saddle point and the primal, dual iterate in Figure \ref{fig: ex inf-sharp Lagrange 3}.
	\begin{figure}
		\centering
		\begin{tabular}{cc}
			\makecell{\includegraphics[width=0.45\textwidth, trim ={15  0 0 0}, clip]{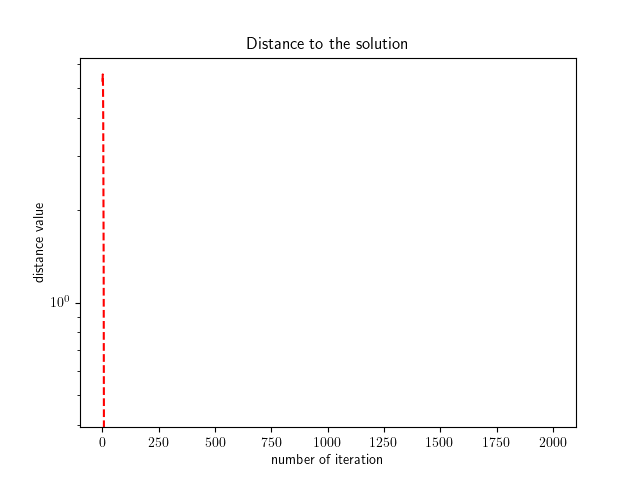}} & 
			\makecell{\includegraphics[width=0.45\textwidth, trim ={15  0 0 0}, clip]{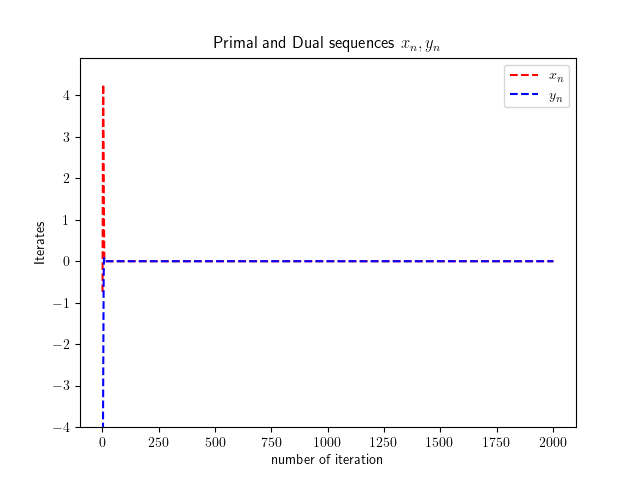}} \\
			\multicolumn{1}{c}{(a)}&  \multicolumn{1}{c}{(b)}
		\end{tabular}
		\caption{Example \ref{ex: num Lagrange inf-sharp 3}. From left to right: distance to the solution; primal-dual iterates.}
		\label{fig: ex inf-sharp Lagrange 3}
	\end{figure}
	This is a special case because by the nature of the proximal operator for $f$, which returns zero whenever the iterate is closed enough. That is why primal and dual iterate take zero value just after a few iterations.
\end{example}
{
	
	\begin{example}
		\label{ex: inf-sharp Lagrange 4}
		We consider the Lagrangian
		\[
		\mathcal{L} (x,y) = |x|+|x^2-2|+xy -|y|-|y^2-2|,
		\]
		which has a similar form as in Example \ref{ex: lagrange inf-sharp 4}. It also has one saddle point at $(0,0)$. We run Algorithm \ref{alg: prima-dual no xn-1} for this Lagrangian in two cases: inside and outside the region mentioned in Theorem \ref{theo: pd-yx seq convergence}.
		To compute the primal and dual update of Algorithm \ref{alg: prima-dual no xn-1}, we use Scipy package in Python to approximate the proximal operator in each step. 
		
		\paragraph{Setting: } We set $\tau = 0.25,\sigma =0.35, \theta=1$ and we take sharpness constant $\mu=0.9$. The weakly convex modulus $\rho=2$, so the quantity  
			$\frac{\mu }{\max\left\{ \frac{1}{2\sigma} , \frac{1}{2\tau}\right\} - A} \approx 0.3199,$ 
		where $A = \min\left\{ \frac{1- \sqrt{\sigma\tau}\Vert L \Vert}{2\tau}, \frac{1-\sigma\rho - \theta\sqrt{\sigma\tau}\Vert L \Vert}{2\sigma} \right\} \approx 0.00599.$
		For the first case, we randomize the starting points $(x_0,y_0) \in [-0.3199, 0.3199]^2$ and for the second case, $[-10,10]^2$.
		We plot the distance of the iterate to the saddle point, both primal and dual iterate in Figure \ref{fig: ex inf-sharp Lagrange 4 intube} for the first case, and Figure \ref{fig: ex inf-sharp Lagrange 4 outtube} for the second case. Notice the differences in the two figures as the sequences converge to different points from the saddle point in the second case.
		
		\begin{figure}
			\centering
			\begin{tabular}{cc}
				\makecell{\includegraphics[width=0.45\textwidth, trim ={15  0 0 0}, clip]{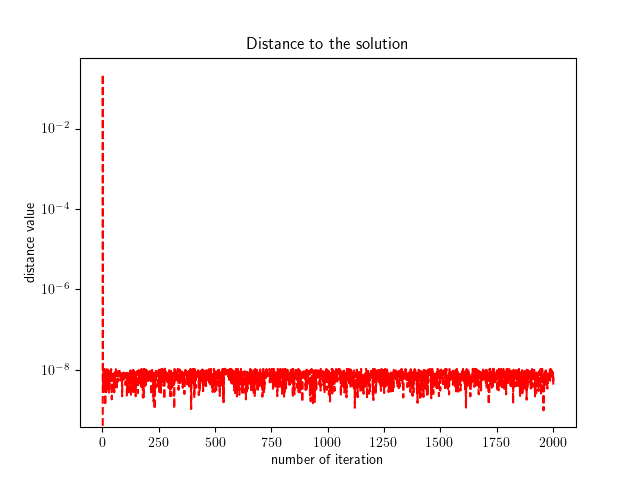}} & 
				\makecell{\includegraphics[width=0.45\textwidth, trim ={15  0 0 0}, clip]{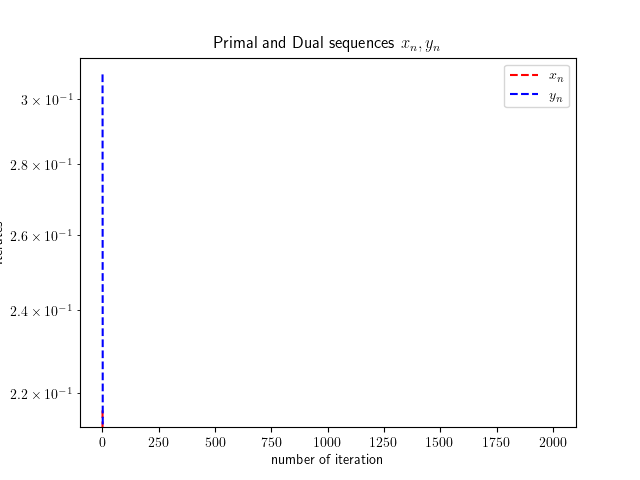}} \\
				\multicolumn{1}{c}{(a)}&  \multicolumn{1}{c}{(b)}
			\end{tabular}
			\caption{Example \ref{ex: inf-sharp Lagrange 4}; Case $1/2$; From left to right: distance to the solution; primal-dual iterates.}
			\label{fig: ex inf-sharp Lagrange 4 intube}
		\end{figure}
		\phantom{spacing for better view}
		\begin{figure}
			\centering
			\begin{tabular}{cc}
				\makecell{\includegraphics[width=0.45\textwidth, trim ={15  0 0 0}, clip]{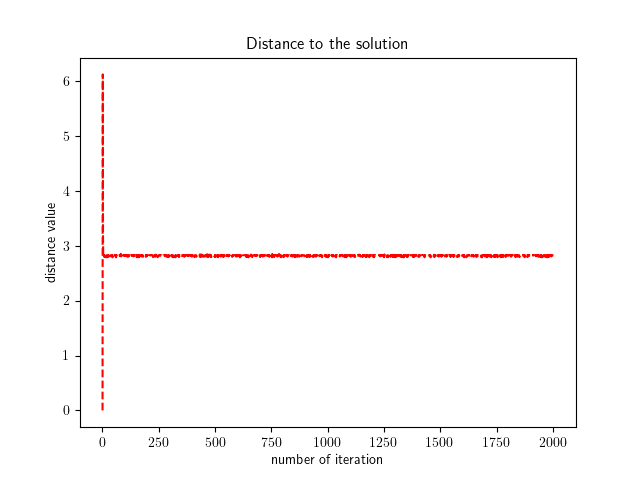}} & 
				\makecell{\includegraphics[width=0.45\textwidth, trim ={15  0 0 0}, clip]{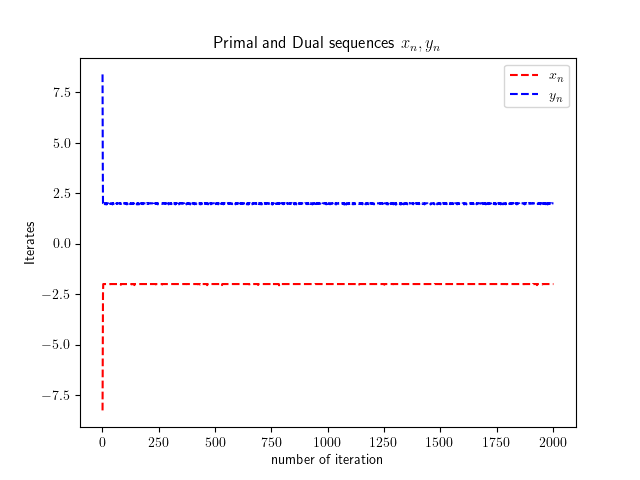}} \\
				\multicolumn{1}{c}{(a)}&  \multicolumn{1}{c}{(b)}
			\end{tabular}
			\caption{Example \ref{ex: inf-sharp Lagrange 4}; Case $2/2$; From left to right: distance to the solution; primal-dual iterates.}
			\label{fig: ex inf-sharp Lagrange 4 outtube}
		\end{figure}
	\end{example}
}
\subsection{$\ell_1$-regularization} 
\subsubsection*{Large scale $\ell_1$-regularization}
The inf-sharp condition is quite restrictive, as we need to test for Lagrangian. It is much easier to verify sharpness condition \eqref{eq: def sharpness primal composite} for primal objective functions.
For example, consider the following $\ell_1$ regularization problem in $\mathbb{R}^n$
\begin{equation}
	\label{eq: l1 problem}
	\min_{x\in \mathbb{R}^n} \Vert x \Vert_1 +\frac{1}{2}\Vert Ax-b\Vert^2,    
\end{equation}
where $\Vert x\Vert_1 = \sum_{i=1}^n |x_i|$, $A \in \mathbb{R}^{m\times n}$ is a random transformation matrix and $b\in \mathbb{R}^m$ is the noisy data.
Our job is to recover the original signal $x^*$. Problem \eqref{eq: l1 problem} is a common reconstruction problem and has been studied extensively in \nolinebreak\citep{combettes2005signal,chambolle2011first,molinari2021iterative}. 

Instead of solving problem \eqref{eq: l1 problem}, we modify it into the following
\begin{equation}
	\label{eq: l1 problem weakconv}
	\min_{x\in \mathbb{R}^n} |\Vert x\Vert^2 -\Vert x^* \Vert^2| + \frac{1}{2}\Vert Ax-b\Vert^2,
\end{equation}
which is weakly convex and sharp thanks to the first term (see \nolinebreak\cite{bednarczuk2023convergence}). Problem \eqref{eq: l1 problem weakconv} has the same meaning as problem \eqref{eq: l1 problem} as a reconstruction problem from noisy data where one does not know the original signal but its magnitude. We plan to solve both problems \eqref{eq: l1 problem} and \eqref{eq: l1 problem weakconv} using Algorithm \ref{alg: prima-dual no with primal update 1st} to compare the behavior of the sequence $(x_n)_{n\in\mathbb{N}}$. 

For proximal calculation, the function $\Vert y - b\Vert^2 /2$ is convex and has an explicit form for each update
\[
y_{n+1} = \frac{y_n - \tau b}{1+\tau}.
\]
The function $\| \Vert x\Vert^2 - c|, c>0$ is just a generalization of the one in \nolinebreak\cite[Remark 8]{bednarczuk2023convergence} so we can have an explicit update with stepsize $2\sigma <1$ as
\[
x_{n+1} = \begin{cases}
	\frac{x_{n}}{1+2\sigma} & \left\Vert x_{n}\right\Vert ^{2}>\left(1+2\sigma\right)^{2}c,\\
	\frac{\sqrt{c}x_{n}}{\left\Vert x_{n}\right\Vert } & \left(1-2\sigma\right)^{2}c\leq\left\Vert x_{n}\right\Vert ^{2}\leq\left(1+2\sigma\right)^{2}c,\\
	\frac{x_{n}}{1-2\sigma} & \left\Vert x_{n}\right\Vert ^{2}<\left(1-2\sigma\right)^{2}c.
\end{cases}
\]

\paragraph{Setting:} We intend to solve a large-scale sparse problem. Let $n=3000,m=2000$, and $A$ be a random matrix with normal distribution. For $x^*\in \mathbb{R}^n$, we randomize as a sparse vector with the density of $10\%$ with uniform distribution in $[0,1)$. For noisy data, we set $b = Ax^* +\delta$ where we consider two cases for $\delta$: $\delta = 0.1$ and $\delta = \Vert Ax^*\Vert U$ where $U$ is a random vector with uniform distribution in $[-0.1,0.1)$.

Let us set the maximum iterations be $N=5000$ and stepsizes $\sigma = 0.1,\tau = \min\{0.99, \frac{1}{\Vert A\Vert^2 \sigma}\}, \theta =1$ to ensure that $\Vert A\Vert \sqrt{\sigma\tau}<1$.
As the starting iterates can affect the algorithm, we test for two cases when $(x_0,y_0)$ are zeros vectors and when $x_0= E_0^+ x^* / \Vert x^*\Vert$ is close to the solution by the quantity
\[
E^+_0 = \frac{\sigma\mu +\sigma \sqrt{\mu^2 - \frac{\varepsilon_0^2 (\sigma\rho +\sqrt{\sigma\tau} \Vert A\Vert)}{\sigma\tau}}}{\sigma\rho +\sqrt{\sigma\tau} \Vert A\Vert}, 
\]
where $\mu =0.99,\rho=2,\varepsilon_0^2 = 10^{-7}$. This term can be considered as the bounds on the distance in Theorem \ref{thm: convergence primal sharp}.
We measure the convergence by calculating $\Vert x_n - x^*\Vert$, and illustrate the performance of Algorithm \ref{alg: prima-dual no with primal update 1st} of two problems \eqref{eq: l1 problem} and \eqref{eq: l1 problem weakconv} with the same starting points in Figures \ref{fig: ex primal sharp 01 noise} and \ref{fig: ex primal sharp uniform noise}.
\begin{figure}
	\centering
	\begin{tabular}{cc}
		\makecell{\includegraphics[width=0.45\textwidth, trim ={0 0 0 0}, clip]{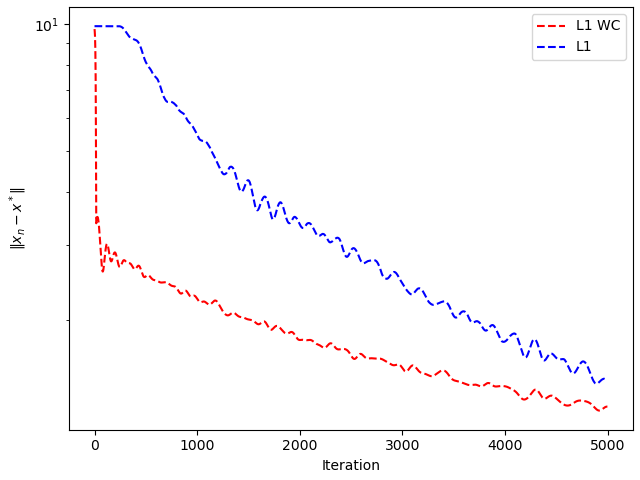}} & 
		\makecell{\includegraphics[width=0.45\textwidth, trim ={18  0 0 0}, clip]{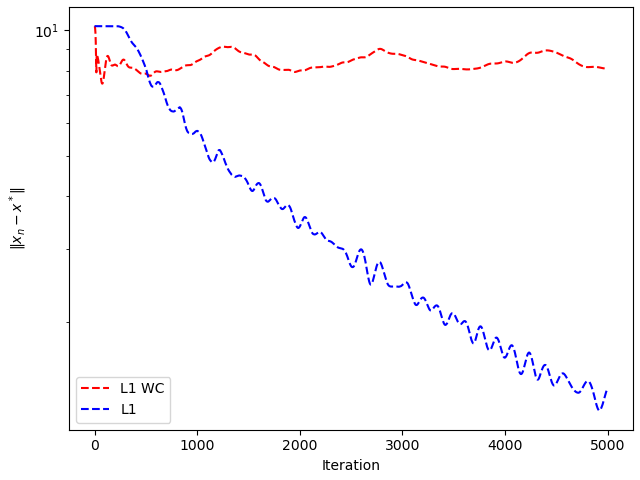}} \\
		\multicolumn{1}{c}{(a)}&  \multicolumn{1}{c}{(b)}
	\end{tabular}
	\caption{Distance to the solution of problem \eqref{eq: l1 problem} (blue) and \eqref{eq: l1 problem weakconv} (red) with initials: closed to the solution (a), zero (b).}
	\label{fig: ex primal sharp 01 noise}
\end{figure}
\phantom{aaaaa}
\begin{figure}
	\centering
	\begin{tabular}{cc}
		\makecell{\includegraphics[width=0.45\textwidth, trim ={0 0 0 0}, clip]{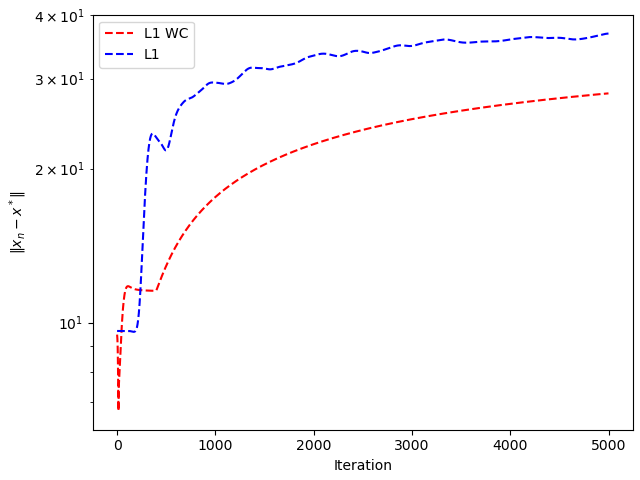}} & 
		\makecell{\includegraphics[width=0.45\textwidth, trim ={18  0 0 0}, clip]{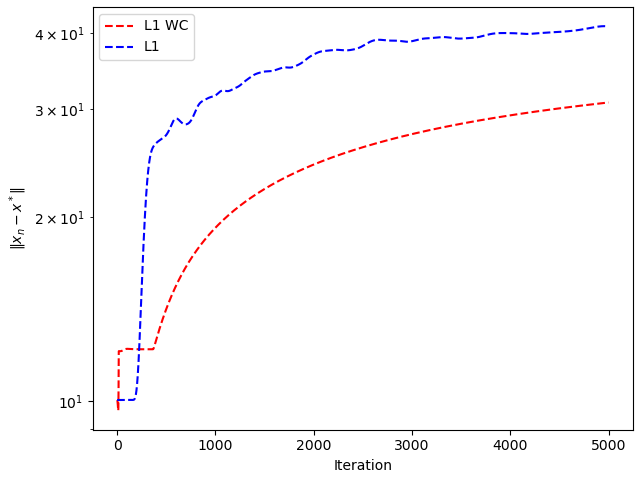}} \\
		\multicolumn{1}{c}{(a)}&  \multicolumn{1}{c}{(b)}
	\end{tabular}
	\caption{Distance to the solution of problem \eqref{eq: l1 problem} (blue) and \eqref{eq: l1 problem weakconv} (red) for large random noise with initials: closed to solution (a), zero (b).}
	\label{fig: ex primal sharp uniform noise}
\end{figure}

For the first case with constant noise, Algorithm \ref{alg: prima-dual no with primal update 1st} with close initialization to the solution gives us a better result for weakly convex problem \eqref{eq: l1 problem weakconv} than \eqref{eq: l1 problem}. For zero initialization, $x_n$ from \eqref{eq: l1 problem weakconv} tend to be bounded from below by some threshold, while the $x_n$ from \eqref{eq: l1 problem} continue toward the solution. After a particular iteration, Algorithm \ref{alg: prima-dual no xn-1} for convex problem has better results compared to weakly convex problem \eqref{eq: l1 problem weakconv}, and continues to decrease to the solution while weakly convex problem stabilizes at some value. We believe that this behavior comes from two factors: we do not put any control on the dual iterate $y_n$, and using a proximal subgradient with fixed constant $\rho$ for weakly convex function.

For the second case with larger noise from $\Vert Ax^*\Vert$, both problems diverge from the solution. As seen in Figure \ref{fig: ex primal sharp uniform noise}, in both initializations, the iterate from weakly convex problem \eqref{eq: l1 problem weakconv} has better error compared to convex problem \eqref{eq: l1 problem}.

\subsubsection*{Image Deblurring}
We use a similar model and comparison in the previous subsection to Image Deblurring.
\begin{align}
	& \min_{x} \frac{1}{2}\Vert Ax-b\Vert^2 + \Vert x\Vert_1, \tag{Convex model} \\
	& \min_{x} | \Vert x\Vert^2 - \Vert x_0\Vert^2 | + \frac{1}{2}\Vert Ax-b\Vert^2. \tag{Weakly Convex model}
\end{align}
Here $x_0$ is the original image, the matrix $A$ is a Gaussian blurring matrix, we extract $A$ by applying Gaussian filter with standard deviation $\sigma=4$ to the identity matrix. The quantity $b$ will be the blurred image corrupted with additive noise from normal distribution, $b = Ax+\varepsilon \mathcal{N}(0,1)$ where $\varepsilon = 0.01$.

For the setting of the algorithm, we keep the same stepsizes as in the previous example with zeros initialization. We only run the algorithm for $N=1000$ steps. To measure the result, we calculate the peak signal-to-noise ratio (PSNR) with respect to the original image. The higher PSNR implies a better result from the process.

The images are taken from the dataset BSD68 of \href{https://github.com/majedelhelou/denoising_datasets/tree/main}{Github}. The images are then rescaled to the value between $[0,1]$ and resized into $256\times 256$. The details for preprocessing steps can be found in \nolinebreak\cite{peyre2011numerical} or (\href{https://www.numerical-tours.com/}{Numerical Tours}). We test for several images (number $007,011,013,044$ in the dataset) and show the results in Figure \eqref{fig: Deblurring}. The result shows that the convex model for this application does not work well. The weakly convex model gives improved results compared to the blurred images, as one can recognize the shapes and objects. This is because, in the weakly convex model, we also include the information of the solution $x_0$ and the noisy data.

\begin{figure}
	\centering
	\includegraphics[width=1\textwidth, clip]{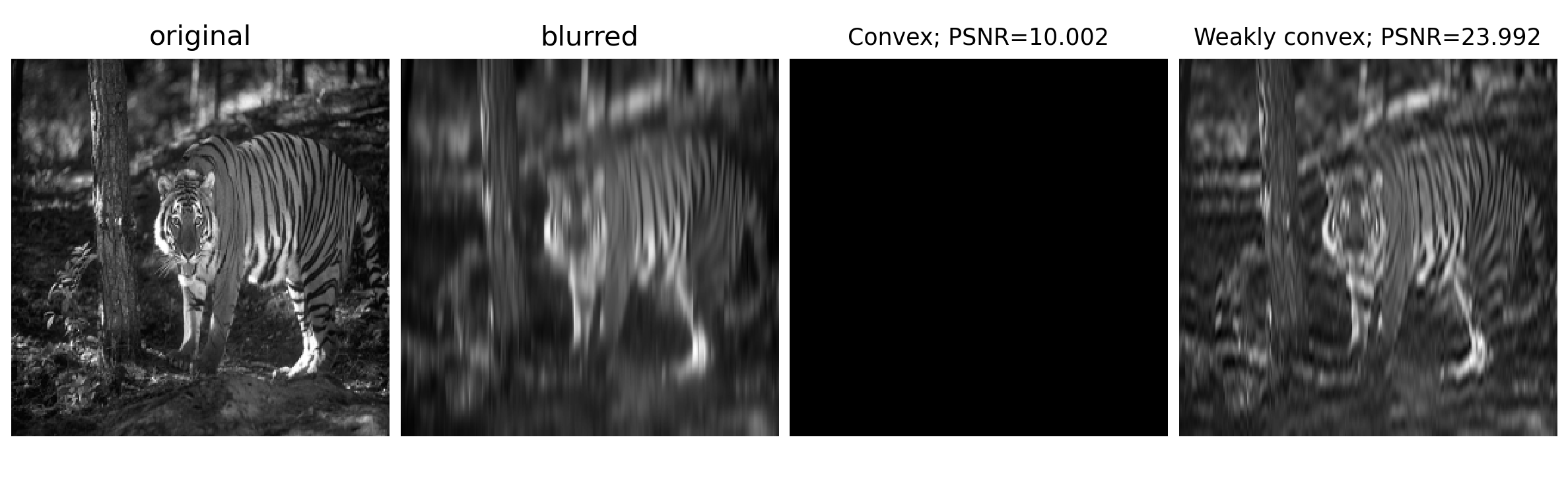} \\
	\includegraphics[width=1\textwidth, clip]{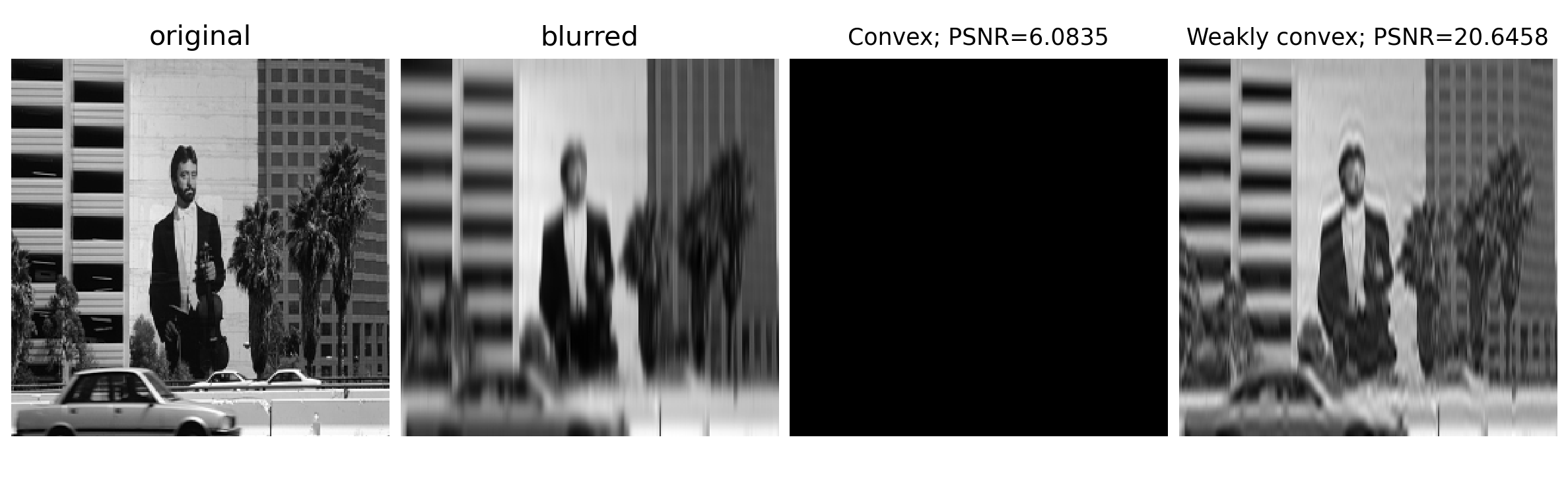} \\\includegraphics[width=1\textwidth, clip]{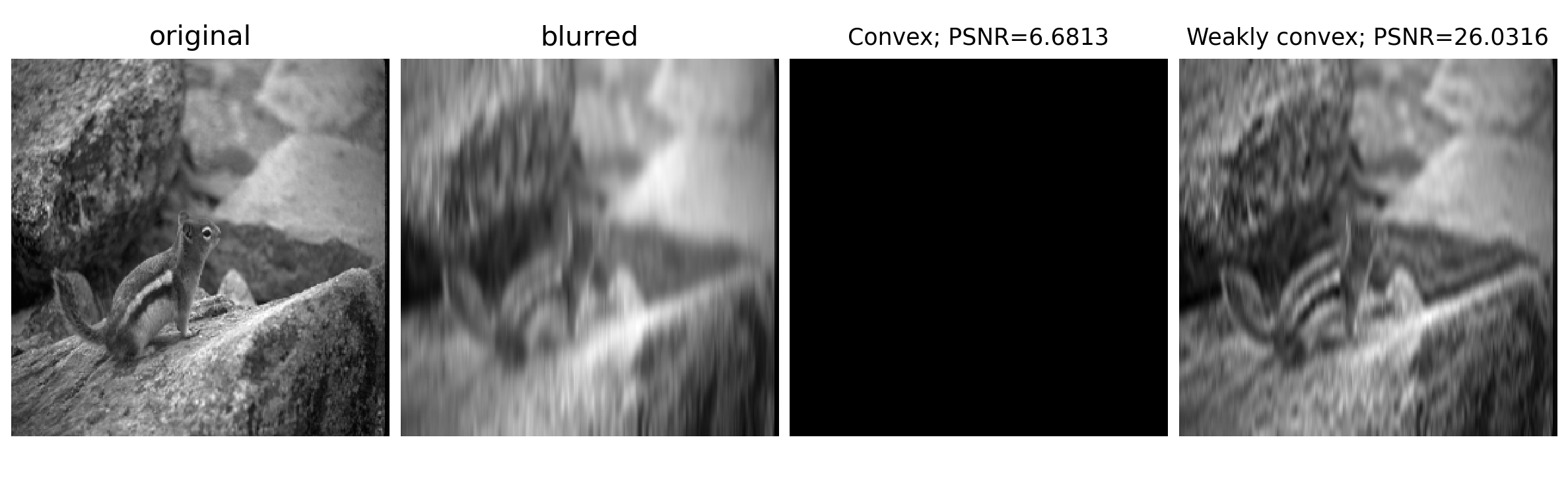} \\
	\includegraphics[width=1\textwidth, clip]{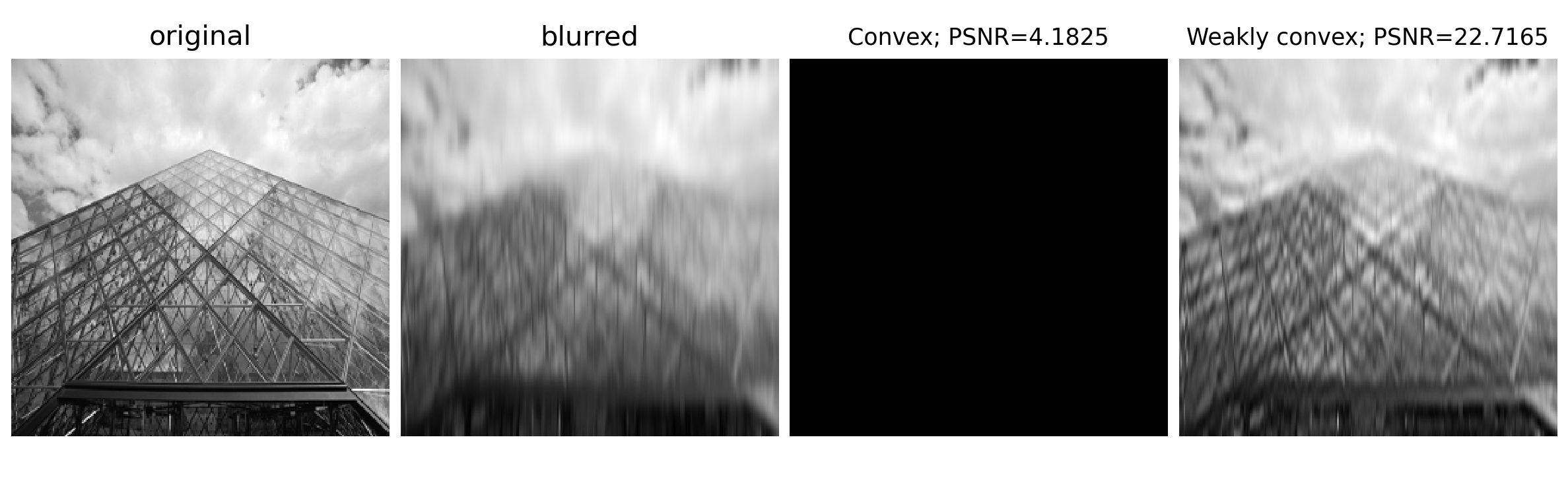} 
	\caption{Image Deblurring using convex and weakly convex model.}
	\label{fig: Deblurring}
\end{figure}

\subsection{Image Denoising with Total Variation}

A typical application of the primal-dual algorithm would be Total Variation for image denoising. We consider the following total variation problems and their modification with weakly convex terms,
\begin{align}
	& \min_{x\in X} \frac{\lambda}{2}\Vert x-b\Vert^2 +\Vert\nabla x\Vert_1, \tag{Convex model}\\
	& \min_{x\in X} \lambda | \Vert x\Vert^2-b | +\Vert\nabla x\Vert_1, \tag{WC-1} \\
	& \min_{x\in X} | \Vert x\Vert^2- \Vert x_0\Vert^2 | + \frac{\lambda}{2}\Vert x-b\Vert^2 +\Vert\nabla x\Vert_1, \tag{WC-2}\\
	& \min_{x\in X} \Vert x-x_0\Vert_1 + \frac{\lambda}{2}\Vert x-b\Vert^2 +\Vert\nabla x\Vert_1, \tag{WC-3}\\
	& \min_{x\in X} \Vert x^2-x_0^2\Vert_1 + \frac{\lambda}{2}\Vert x-b\Vert^2 +\Vert\nabla x\Vert_1, \tag{WC-4}
\end{align}
where $x^2$ is taken element-wise. All the weakly convex models are sharp except the first model. If we replace $b$ with $\Vert x_0\Vert^2$ in the model (WC-1), then we have no information on the noisy image, which does not make sense. That is why we include $x_0$ and $b$ in all other models. We use Algorithm \ref{alg: prima-dual no with primal update 1st} to compare these different models.

The general problem can be written in the form 
\[
\min_{x\in X} f(x) + \Vert \nabla x\Vert_1,
\]
where $\nabla:\mathbb{R}^{n^2} \to \mathbb{R}^{2n^2}$ is the discrete gradient operator, which can be calculated as $
(\nabla x)_{i,j} = ((\nabla x)^1_{i,j},(\nabla x)^2_{i,j}), 
$
where
\begin{equation*}
	(\nabla x)^1_{i,j} = \begin{cases}
		u_{i+1,j}-u_{i,j} & i<n, \\
		0 & i=n.
	\end{cases},
	\qquad
	(\nabla x)^2_{i,j} = \begin{cases}
		u_{i,j+1}-u_{i,j} & j<n, \\
		0 & j=n.
	\end{cases}
\end{equation*}
We use $g(Lx) = \Vert \nabla x\Vert_1$ as the composite function. The dual problem will have the form 
\[
\max_{y\in \mathbb{R}^{2n^2}} \min_{x\in \mathbb{R}^{n^2}} f(x) - \langle x, \mathrm{div } y \rangle -\delta_P (y),
\]
where $\delta_P(y)$ is the indicator function on the set $P = \{ y\in \mathbb{R}^{2n^2}: \Vert y\Vert_\infty \leq 1 \}$ with
\[
\Vert y\Vert_\infty = \max_{i,j} |y_{i,j}|, \qquad |y_{i,j}| = \sqrt{(y_{i,j}^1)^2 + (y_{i,j}^2)^2}.
\]
The function $\mathrm{div } y$ is the discrete divergence operator. All the explicit forms of the gradient and divergence can be found in \nolinebreak\citep{chambolle2004algorithm,chambolle2011first}, as well as an explicit form for the proximal operator. The proximal calculation for $f$ can be made using the explicit form of the previous examples. The proximal for $\ell_1$ norm is the softmax function which is available on \href{https://proximity-operator.net/multivariatefunctions.html}{(Prox-repository)}.

\paragraph{Setting:} We use the same stepsizes as in the previous example with zeros initialization, and the maximum iteration is  $N=2000$. We use (PSNR) to measure the error with respect to the original image.
The images will be the same as in the previous example, corrupted with additive noise in the form $b= x_0 + 0.1*\mathcal{N}(0,1)$, where $\mathcal{N}(0,1)$ is the matrix with standard normal distribution. The results are illustrated in Figure \ref{fig: Denoising-1}.


\begin{figure}
	\centering
	\includegraphics[width=0.8\textwidth, clip]{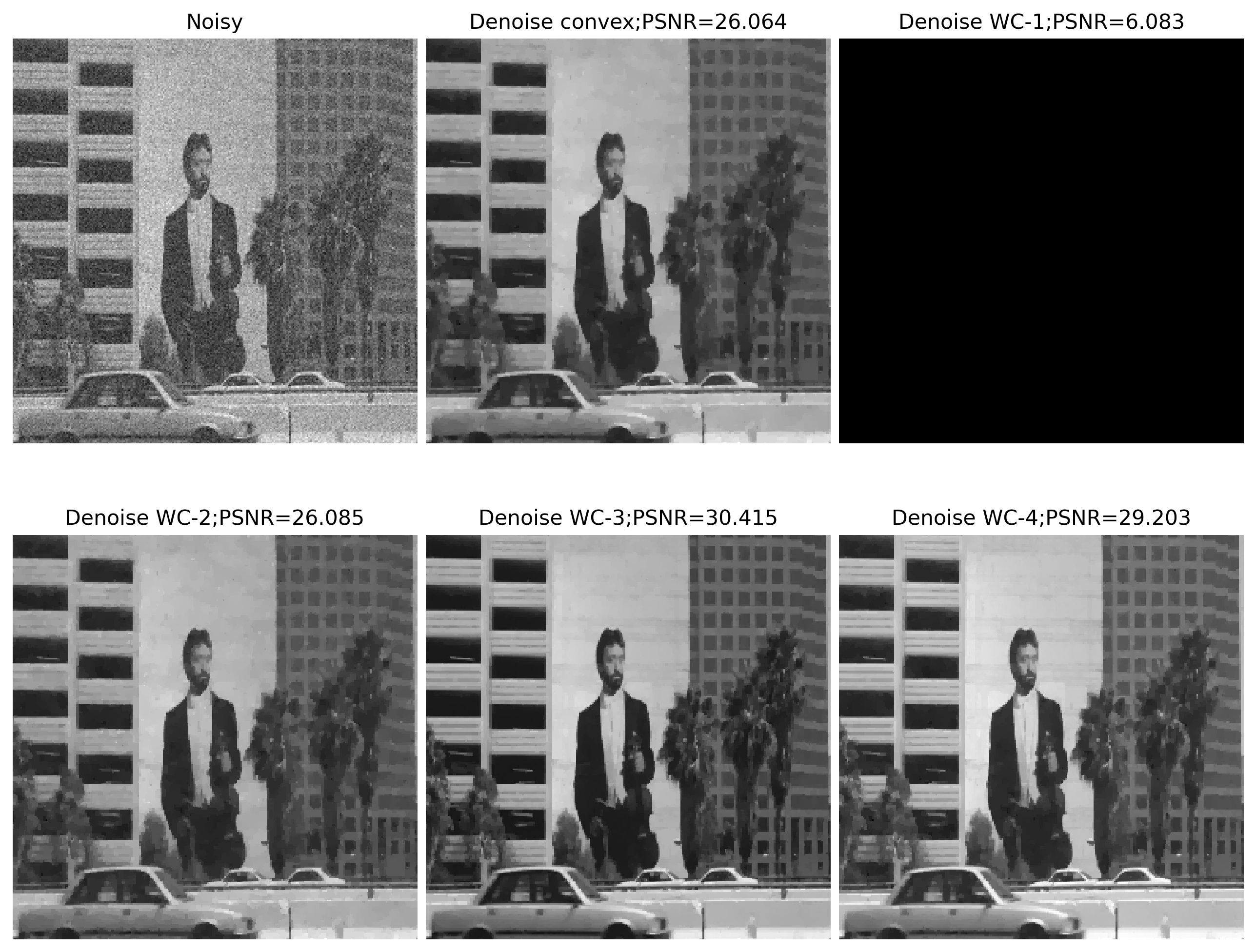}\\
	\includegraphics[width=0.8\textwidth, clip]{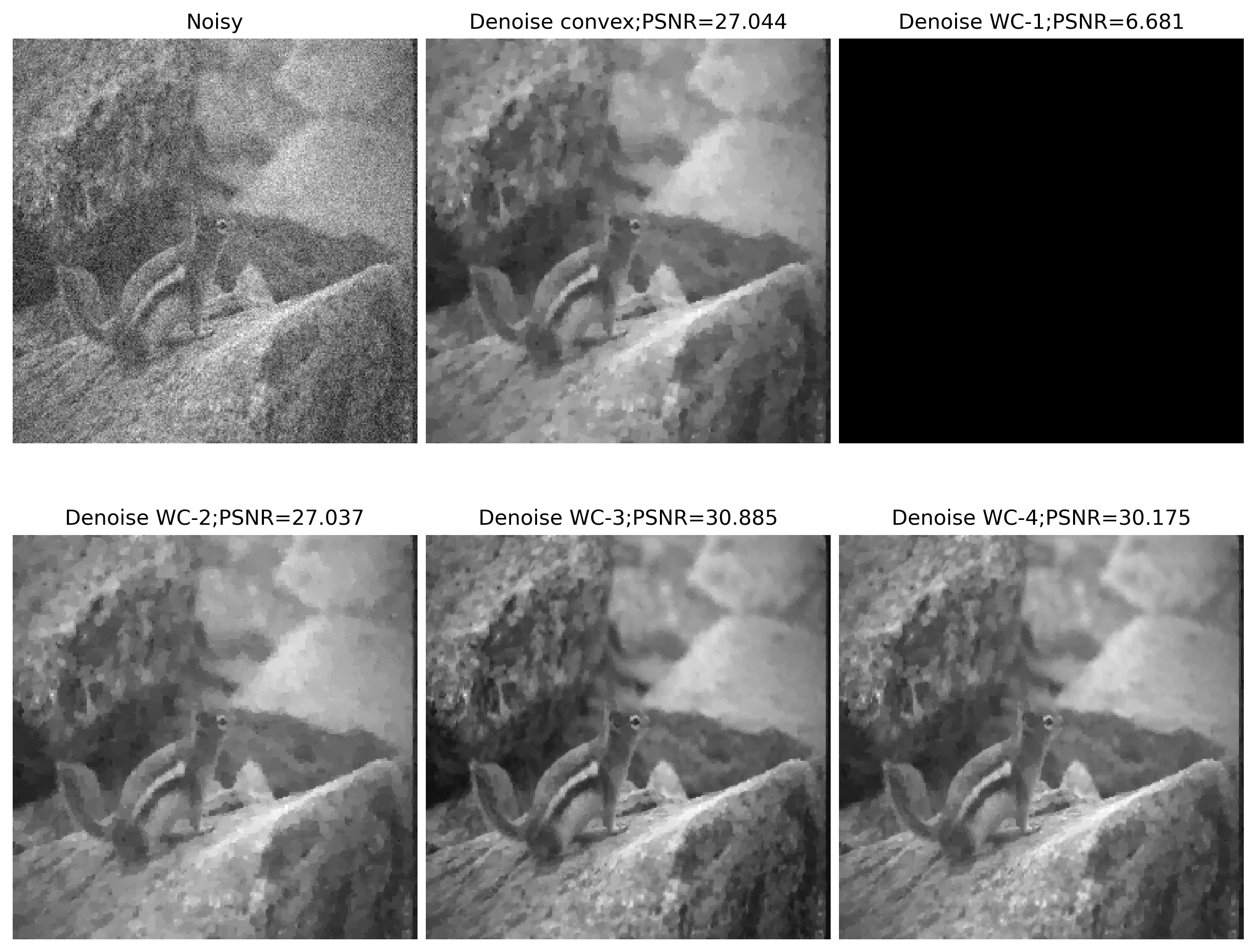} 
	\caption{Image Denoising using total variation.}
	\label{fig: Denoising-1}
\end{figure}

\subsubsection*{Acknowledgments}
\noindent This work was funded by the European Union's Horizon 2020 research and innovation program under the Marie Sk{\l}odowska--Curie grant agreement No 861137. This work represents only the authors' view, and the European Commission is not responsible for any use that may be made of the information it contains.
\subsubsection*{Supplementary Data}
\noindent 
The data used in this work can be found online at \href{https://github.com/majedelhelou/denoising_datasets/tree/main}{Denoising dataset}.

\bibliographystyle{cas-model2-names}
\bibliography{reference}


\end{document}